\title{Derived Equivalences of K3 Surfaces and Twined Elliptic Genera\footnote{2010 {\em Mathematics Subject Classification:} 11F50, 14F05, 14J28, 17B69, 20C34, 20C35, 58J26}
}
\author[1]{John F. R. Duncan\thanks{E-mail: \texttt{john.duncan@emory.edu}}}
\author[2]{Sander Mack-Crane\thanks{E-mail: \texttt{sander@berkeley.edu}}}
\affil[1]{Department of Mathematics and Computer Science, Emory University, Atlanta, GA 30322, U.S.A.}
\affil[2]{Department of Mathematics, University of California at Berkeley, Berkeley, CA 94720, U.S.A.}
\date{}
\newcommand{\wt}[1]{\widetilde{#1}}
\newcommand{\wh}[1]{\widehat{#1}}
\newcommand{\tw}{\mathrm{tw}}
\renewcommand{\a}{\mathfrak{a}}
\renewcommand{\aa}{\hat{\mathfrak{a}}}
\newcommand{\CC}{\mathbb{C}}
\newcommand{\e}{\mathfrak{e}}
\newcommand{\HH}{\mathbb{H}}
\newcommand{\LL}{\Lambda}
\newcommand{\PP}{\mathbb{P}}
\newcommand{\QQ}{\mathbb{Q}}
\newcommand{\RR}{\mathbb{R}}
\newcommand{\VU}{{\vsn}}
\newcommand{\VV}{{\vsn_\tw}}
\newcommand{\Jo}{J(0)}
\newcommand{\jo}{\jmath}
\newcommand{\Lo}{L(0)}
\newcommand{\Lm}{L(m)}
\newcommand{\Ln}{L(n)}
\newcommand{\vv}{\mathbf{v}}
\newcommand{\zz}{\mathfrak{z}}
\newcommand{\w}{\omega}
\newcommand{\wc}{\bar{\omega}}
\newcommand{\ZZ}{\mathbb{Z}}
\newcommand{\ii}{\textbf{i}}
\newcommand{\up}{\bigtriangleup}
\newcommand{\dn}{\bigtriangledown}
\newcommand{\Aut}{\operatorname{Aut}}
\newcommand{\be}{\begin{equation}}
\newcommand{\Cliff}{\operatorname{Cliff}}
\newcommand{\Co}{\textsl{Co}}
\newcommand{\coa}{G}	
\newcommand{\CM}{\operatorname{CM}}
\newcommand{\Db}{\operatorname{D}^{\rm b}}
\newcommand{\ee}{\end{equation}}
\newcommand{\End}{\operatorname{End}}
\newcommand{\Id}{\operatorname{Id}}
\newcommand{\tr}{\operatorname{tr}}
\newcommand{\rk}{\operatorname{rk}}
\newcommand{\II}{I\!I}
\newcommand{\lab}{\langle}
\newcommand{\mlc}{\widetilde{H}(X,\ZZ)\otimes_{\ZZ}{\CC}}
\newcommand{\mlcoo}{\widetilde{H}^{1,1}(X,\ZZ)\otimes_{\ZZ}{\CC}}
\newcommand{\mlr}{\widetilde{H}(X,\ZZ)\otimes_{\ZZ}\RR}
\newcommand{\mlroo}{\widetilde{H}^{1,1}(X,\ZZ)\otimes_{\ZZ}{\RR}}
\newcommand{\mlz}{\widetilde{H}(X,\ZZ)}
\newcommand{\mlzoo}{\widetilde{H}^{1,1}(X,\ZZ)}
\newcommand{\Spin}{\operatorname{Spin}}
\newcommand{\SO}{\operatorname{SO}}
\newcommand{\SL}{\operatorname{SL}}
\newcommand{\U}{\operatorname{U}}
\newcommand{\rab}{\rangle}
\newcommand{\rank}{\operatorname{rank}}
\newcommand{\Span}{\operatorname{Span}}
\newcommand{\Stab}{\operatorname{Stab}}
\newcommand{\Stabo}{\operatorname{Stab}^{\circ}}
\newcommand{\vn}{V^{\natural}}
\newcommand{\vsn}{V^{s\natural}}
\newcommand{\vsnt}{V^{s\natural}_\tw}
\newcommand{\vir}{\mathcal{V}}
\newcommand{\MM}{\mathbb{M}}
\newtheorem{theorem}{Theorem}[section]
\newtheorem*{stheorem}{Theorem}
\newtheorem{proposition}[theorem]{Proposition}
\newtheorem{lemma}[theorem]{Lemma}
\newtheorem{conjecture}[theorem]{Conjecture}
\newtheorem*{sconjecture}{Conjecture}
\numberwithin{equation}{section}
\begin{document}
\maketitle

\begin{abstract}
We use the unique canonically-twisted module over a certain distinguished super vertex operator algebra---the moonshine module for Conway's group---to attach a weak Jacobi form of weight zero and index one to any symplectic derived equivalence of a projective complex K3 surface that fixes a stability condition in the distinguished space identified by Bridgeland. According to work of Huybrechts, following Gaberdiel--Hohenegger--Volpato, any such derived equivalence determines a conjugacy class in Conway's group, the automorphism group of the Leech lattice. Conway's group acts naturally on the module we consider.

In physics the data of a projective complex K3 surface together with a suitable stability condition determines a supersymmetric non-linear sigma model, and supersymmetry preserving automorphisms of such an object may be used to define twinings of the K3 elliptic genus. Our construction recovers the K3 sigma model twining genera precisely in all available examples. In particular, the identity symmetry recovers the usual K3 elliptic genus, and this signals a connection to Mathieu moonshine. A generalization of our construction recovers a number of the Jacobi forms arising in umbral moonshine.

We demonstrate a concrete connection to supersymmetric non-linear K3 sigma models by establishing an isomorphism between the twisted module we consider and the vector space underlying a particular sigma model attached to a certain distinguished K3 surface. 
\end{abstract}

\clearpage

\tableofcontents

\section{Introduction}\label{sec:intro}

The main result of this paper is a construction which attaches weak Jacobi forms to suitably defined autoequivalences of the bounded derived category of coherent sheaves on a complex projective K3 surface.

The origins of our method extend back to the monstrous moonshine phenomenon, initiated by the observations of McKay and Thompson \cite{Tho_FinGpsModFns,Tho_NmrlgyMonsEllModFn}, Conway--Norton \cite{MR554399}, and Queen \cite{MR628715}. The more recent Mathieu moonshine observation of Eguchi--Ooguri--Tachikawa \cite{Eguchi2010}, and its extension \cite{UM,UMNL,mumcor} to Niemeier lattice root systems is also closely related.

Our results also have physical significance. As we explain presently, they suggest that a certain distinguished super vertex operator algebra is a universal object for supersymmetric non-linear K3 sigma models. This represents a new role for vertex algebra in physics: rather than serving as the ``chiral half'' of a particular, holomorphically factorizable super conformal field theory, the super vertex operator algebra in question is, evidently, simultaneously related to a diverse family of super conformal field theories.

\subsection{Monstrous Moonshine}\label{sec:intro:mm}

To explain the connection to monstrous moonshine recall that an isomorphism of Riemann surfaces
\begin{gather}\label{eqn:intro-Tg}
	T_g:\Gamma_g\backslash\wh{\HH}\xrightarrow{\sim} \wh{\CC}
\end{gather}
is attached to each conjugacy class $[g]$ in the monster group $\MM$ by the work \cite{MR554399} of Conway--Norton. Here $\wh{\CC}:=\CC\cup\{\infty\}\simeq \PP^1$ denotes the Riemann sphere, we set
\begin{gather}
\wh{\HH}:=\HH\cup\QQ\cup\{\infty\},
\end{gather}
for $\HH=\{\tau\in\CC\mid\Im(\tau)>0\}$, and $\Gamma_g$ is a discrete subgroup of $\SL_2(\RR)$ that is commensurable with the modular group $\SL_2(\ZZ)$. Actually, $\Gamma_g$ always lies between some $\Gamma_0(N)$ and its normalizer in $\SL_2(\RR)$ (cf. \S\ref{sec:autfms} for $\Gamma_0(N)$ and its normalizer), and the subgroup of upper-triangular matrices in $\Gamma_g$ is generated by $\pm\left(\begin{smallmatrix}1&1\\0&1\end{smallmatrix}\right)$ for all $g\in \MM$. Further, $T_g$ maps the $\Gamma_g$-orbit containing $\infty\in\wh{\HH}$ to $\infty\in\wh{\CC}$. So $T_g$ (restricted to $\HH$) admits a Fourier series expansion 
\begin{gather}
	T_g(\tau)=\sum_{n\geq -1}c_g(n)q^n
\end{gather}
for some $c_g(n)\in \CC$, where $q=e^{2\pi \ii \tau}$. (We choose a square root of $-1$ in $\CC$ and denote it $\ii$.) Moreover, $c_g(-1)=1$ and $c_g(0)=0$ for all $g\in \MM$.

For $g=e$ the identity in $\MM$ we have $\Gamma_e=\SL_2(\ZZ)$, so $T_e(\tau)$ is almost the classical complex elliptic $j$-invariant,
\begin{gather}
	\begin{split}
	T_e(\tau)
	&=j(\tau)-744\\
	&=q^{-1} + 196884 q + 21493760 q^2 + 864299970 q^3 + 20245856256 q^4+\ldots
	\end{split}
\end{gather}
McKay's original moonshine observation is that $196884=1+196883$, where $196883$ is the dimension of the first non-trivial irreducible representation of $\MM$. Thompson extended this \cite{Tho_NmrlgyMonsEllModFn} and posited the existence of a graded infinite-dimensional $\MM$-module 
\begin{gather}
\vn=\bigoplus_{n\geq -1}\vn_n
\end{gather} 
such that $J(\tau):=j(\tau)-744$ is the generating function of the dimensions of its homogeneous subspaces. The $T_g$ of Conway--Norton \cite{MR554399} are explicit predictions for what the graded traces of elements $g\in \MM$ on $\vn$ should be, 
\begin{gather}\label{eqn:intro-Tgvn}
	T_g(\tau)=\sum_{n\geq -1}(\tr_{\vn_n}g) q^n.
\end{gather}

The $\MM$-module $\vn$ was constructed concretely by Frenkel--Lepowsky--Meurman \cite{FLMPNAS,FLMBerk,FLM}. The identities (\ref{eqn:intro-Tgvn}) were established for all $g\in \MM$ by Borcherds \cite{borcherds_monstrous}.

We refer to \cite{mnstmlts} for a recent review of moonshine, including a much fuller description of the above developments, and many more references.

The most obvious connection between moonshine and this article starts with the {\em multiplicative moonshine} observation of Conway--Norton (cf. \S9 of \cite{MR554399}), considered in detail by Queen \cite{MR628715}, which attaches analogues of the $T_g$ of (\ref{eqn:intro-Tg}) to elements of the Conway group $\Co_0$, a $2$-fold cover of the sporadic simple group $\Co_1$. 

The Conway group may be realized explicitly as the automorphism group of the Leech lattice $\LL$,
\begin{gather}
	\Co_0=\Aut(\LL).
\end{gather} 
We have $\LL\otimes_\ZZ\CC\simeq \CC^{24}$ so $\Co_0$ comes equipped with a $24$-dimensional representation over $\CC$. (Cf. \S\ref{sec:lat} for more on the $\Co_k$ and $\LL$.) Choose $g\in \Co_0$ and let $\varepsilon_1,\ldots,\varepsilon_{24}$ be the associated eigenvalues. Queen confirmed \cite{MR628715} (cf. also \cite{MR780666}) that 
\begin{gather}
	t_g(\tau):=q^{-1}\prod_{n>0}\prod_{i=1}^{24}(1-\varepsilon_iq^{2n-1})
\end{gather}
defines an isomorphism of Riemann surfaces $\Gamma_g\backslash\wh{\HH}\to\wh{\CC}$, as in (\ref{eqn:intro-Tg}), for some discrete group $\Gamma_g<\SL_2(\RR)$, for any $g\in \Co_0$. 

Note that, in contrast to $T_g$, the constant term in the Fourier expansion of $t_g$ is $-\chi_g$, where
\begin{gather}\label{eqn:intro-chig}
\chi_g:=\sum_i\varepsilon_i,
\end{gather} 
and this value is generally non-vanishing. Define $T^s_g$, for $g\in \Co_0$, by setting
\begin{gather}
	T^s_g(\tau):=t_g(\tau/2)+\chi_g=q^{-1/2}+O(q^{1/2}).
\end{gather}
In our earlier work \cite{vacogm} we obtained the Conway group analogues of the results of Frenkel--Lepowsky--Meurman and Borcherds on monstrous moonshine mentioned above, constructing a graded infinite-dimensional $\Co_0$-module 
\begin{gather}
\vsn=\bigoplus_{n\geq -1}\vsn_{n/2},
\end{gather}
and showing that
\begin{gather}
	T^s_g(\tau)=\sum_{n\geq -1}(\tr_{\vsn_{n/2}}g)q^{n/2}
\end{gather}
for $g\in \Co_0$.

More than this, and in direct analogy with $\vn$, the $\Co_0$-module $\vsn$ comes equipped with a distinguished super vertex operator algebra structure. (Cf. \S\ref{sec:va} for a recap on vertex algebra, and \S\ref{sec:dist} for the construction and characterization of $\vsn$.) 

The reader familiar with vertex algebra will no doubt also be aware of the modularity results on trace functions attached to vertex operator algebras (cf. \cite{Zhu_ModInv,Dong2000,MR2046807}) and super vertex operator algebras (cf. \cite{DonZha_MdlrOrbVOSA,MR3077918,MR3205090}). The results of \cite{Dong2000}, for example, go a long way to explaining why the right hand side of (\ref{eqn:intro-Tgvn}) should define a holomorphic function on $\HH$ that is invariant for some congruence subgroup of $\SL_2(\ZZ)$. Interestingly, there is as yet no conceptual understanding of why (\ref{eqn:intro-Tgvn}) should actually satisfy the much stronger condition of defining an isomorphism as in (\ref{eqn:intro-Tg}), but see \cite{DunFre_RSMG}, or \S6 of the review \cite{mnstmlts}, for a conjectural proposal to establish a theory that would achieve this.

\subsection{K3 Surfaces and Jacobi Forms}
\label{sec:intro:k3}

In this article we use the unique (up to equivalence) canonically-twisted $\vsn$-module to attach a Jacobi form $\phi_g$ (cf. (\ref{eqn:tg-phig})) to a suitable derived autoequivalence $g$ of a complex projective K3 surface $X$. More precisely, we prove the following result in \S\ref{sec:tg}.
\begin{stheorem}[\ref{theorem:jacobi form}]
Let $X$ be a projective complex K3 surface and let $\sigma$ be a stability condition in Bridgeland's space. If $g$ is a symplectic autoequivalence of the derived category of coherent sheaves on $X$ that preserves $\sigma$ then $\phi_g$ is a weak Jacobi form of weight $0$, index $1$, and some level.
\end{stheorem}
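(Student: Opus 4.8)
The plan is to exhibit $\phi_g$ as a single graded supertrace over the canonically-twisted module $\VV$, and then to read off its Jacobi behaviour from two structural inputs: the modular invariance theory for twisted trace functions on super vertex operator algebras, and the superconformal current algebra inside $\VU$ that supplies the elliptic variable. First I would fix the defining expression (\ref{eqn:tg-phig}) in the shape
\be
	\phi_g(\tau,z)=\str_{\VV}\!\left(\hat g\, y^{\Jo}\, q^{\Lo-c/24}\right),\qquad y=e^{2\pi\ii z},\ c=12,
\ee
where $\Jo$ is the zero mode of a distinguished weight-one current $J\in\VU$, and $\hat g$ is a lift to $\VV$ of the class in $\Co_0=\Aut(\LL)$ attached to $g$ by Huybrechts (following Gaberdiel--Hohenegger--Volpato); the hypotheses that $g$ be symplectic and preserve $\sigma$ are exactly what make this class well-defined and what guarantee that $\hat g$ acts on $\VV$ by a symmetry fixing the superconformal structure. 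The role of the \emph{canonically-twisted} (Ramond-type) module is that its half-integral $\Lo$-grading and its $\Jo$-charges are those of an index-one form rather than those of a character of $\VU$ itself, with the sign implicit in $\str$ playing the role of $(-1)^F$.

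For the transformation in $\tau$ I would invoke the modular invariance of (twined) twisted trace functions for super vertex operator algebras established in \cite{DonZha_MdlrOrbVOSA,MR3077918,MR3205090}, which rests on the methods of \cite{Zhu_ModInv,Dong2000}. Its hypotheses ask that $\VU$ be suitably regular---holomorphic and $C_2$-cofinite, with a unique isomorphism class of canonically-twisted module---and these properties are part of the construction of $\VU$ recalled in \S\ref{sec:dist}. The conclusion is that the traces obtained from $\phi_g$ by inserting powers of $\hat g$, and elements commuting with it, span a finite-dimensional space carrying a weight-zero action of $\SL_2(\ZZ)$. The weight is zero because no state beyond the charge operator is inserted, so the modular invariance theorem, in the form that incorporates the $\Jo$-charge, returns the conformal weight of the vacuum; and since $g$ need not centralise all of $\Co_0$, the invariance of $\phi_g$ itself holds only for a congruence subgroup $\Gamma\leq\SL_2(\ZZ)$, which is the asserted ``some level''.

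The transformation in $z$, and with it the index, I would obtain from the current algebra generated by $J$ together with the superconformal vector of $\VU$. Conjugating the trace by the exponentiated modes of $J$ implements spectral flow, and the level of $J$ relative to the canonically-twisted grading---fixed by the internal normalisation of $\VU$---produces exactly the index-one multiplier under $z\mapsto z+\lambda\tau+\mu$. Finally, because $\VV$ is bounded below in $\Lo$ with finite-dimensional $\Lo$-eigenspaces on which the $\Jo$-charge is bounded, the $q$-expansion of $\phi_g$ has at worst the pole at the cusp coming from the $q^{-c/24}$ shift; this is precisely the growth permitted for a \emph{weak} Jacobi form. Assembling the two transformation laws with this growth bound, through the standard characterisation of weak Jacobi forms as $\Gamma$-invariant functions satisfying the weak growth condition, yields the statement.

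I expect the main obstacle to lie in the bookkeeping that links the second and third steps. One must match the Ramond-sector normalisation---the $(-1)^F$ carried by $\str$, the $\Lo-c/24$ and $\Jo$ shifts, and the chosen lift $\hat g$---to the conventions that fix the weight, index, and multiplier of a Jacobi form, and in particular pin down $\Gamma$ and verify that the multiplier system is trivial there. Since the twisted-module modular invariance and the spectral-flow relations act on the same pair of variables $(\tau,z)$, rendering them simultaneously compatible is where the genuine work lies; exhibiting the current $J$ compatibly with $\hat g$ is the structural input on which this compatibility depends.
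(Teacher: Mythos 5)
Your strategy is genuinely different from the paper's: the paper never invokes abstract modular-invariance machinery, but instead computes $\phi_g$ explicitly as a combination of theta quotients and eta products (Proposition \ref{proposition:phi_g}), rewrites it as $\tfrac{1}{12}\chi_g\phi_{0,1}+F_g\phi_{-2,1}$ (Proposition \ref{proposition:phi_g and F_g}), and then proves that the single weight-two function $F_g$ lies in $M_2(\Gamma_0(N_g))$ (Proposition \ref{proposition:F_g modular}), after which Proposition \ref{prop:autfms-phiCFs} finishes. However, your proposal has a genuine gap, and it sits exactly where the paper works hardest: the \emph{weak} condition. In the paper's definition, a weak Jacobi form with level must have $q$-expansion supported in non-negative powers at \emph{every} cusp, i.e. after transforming by every $\sigma\in\SL_2(\ZZ)$, not just at the infinite cusp. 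Your growth argument --- $\Lo$ bounded below on $\VV$, so the only pole comes from the $q^{-c/24}$ shift --- controls only the expansion at $\infty$. Under $\tau\mapsto-1/\tau$ and its congruence translates, the trace you wrote is carried onto traces over other sectors of $\VU$ (untwisted-type and $\hat g^a$-twisted sectors), whose ground-state energies lie below $c/24$; already $T^s_e(\tau)=q^{-1/2}+O(q^{1/2})$ has a pole. So the transformed expansions of $\phi_g$ contain a priori negative powers of $q$, and one must prove that they cancel. This is precisely the content of the cusp analysis in the paper: the pieces $t_{\pm g}(\tau/2)$ entering $F_g$ are principal moduli with genuine poles at the cusps in $\Gamma_{\pm g}\infty$ (Theorem \ref{thm:dist-Tsgpm}), and holomorphy of $F_g$ at all cusps holds only because those poles are cancelled by the zeros of $\Lambda_4(\tau)$ and $\Lambda_4(\tau+1/2)$ recorded in Table \ref{tab:Lambda4cusps}, verified case by case against the groups $\Gamma_{\pm g}$ taken from \cite{vacogm}. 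Nothing in your outline substitutes for this step.

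A second, lesser but still real, problem is that the theorems you cite do not cover the needed situation in one piece: \cite{MR3077918,MR3205090} treat twisted and twined traces of super vertex operator algebras but without the Jacobi variable, while \cite{MR2925472,MR1738180} treat $\U(1)$-charge insertions but for untwisted modules of non-super rational vertex operator algebras. None of them directly yields invariance of the single function $\phi_g$ under a specified congruence subgroup with trivial multiplier, nor the value of the index, which in the paper is pinned down not by spectral flow but by the explicit level computation for the current $\jmath$ (Lemma \ref{lem:tg-JPirel}) built from the fixed subspace $P_X\oplus P_Z$. You flag this as bookkeeping to be matched at the end, but it is more than bookkeeping: filling it in would amount to proving a twisted super Jacobi-trace modular-invariance theorem that is not in the cited literature, and even granting it, the all-cusps growth condition above would still remain unproved.
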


Jacobi forms (cf. \cite{eichler_zagier} or \S\ref{sec:autfms}) are $2$-variable analogues of modular forms, admitting transformation formulas under a group of the form $\SL_2(\ZZ)\ltimes \ZZ^2$ (or a finite index subgroup thereof), that are modeled on those of the classical Jacobi theta functions $\vartheta_i(\tau,z)$ (cf. (\ref{equation:theta1})). Jacobi forms also appear as Fourier coefficients of Siegel modular forms (cf. \cite{feingold_frenkel}). Roughly, a Jacobi form has level if it is required to transform only under some congruence subgroup $\Gamma<\SL_2(\ZZ)$, and the term weak refers to certain growth conditions at the cusps of $\Gamma$. 

See \S\ref{sec:de} for a brief review of K3 surfaces, their symplectic derived autoequivalences, and stability conditions, and see \S\ref{sec:va} for the notion of canonically-twisted module over a super vertex operator algebra.

Note that the appearance of Jacobi forms in vertex algebra goes back to the work of Kac--Peterson \cite{MR750341} (cf. also \cite{MR1104219}) on basic representations of affine Lie algebras. (In particular, it actually predates Borcherds' introduction of the notion of vertex algebra in \cite{Bor_PNAS}.) More general results were established recently in \cite{MR2925472}, by applying earlier work \cite{MR1738180} of Miyamoto. Cf. also \cite{MR2201000}. Vertex algebraic constructions were used to attach Jacobi forms to conjugacy classes in the sporadic simple group of Rudvalis in \cite{Duncan2008,Duncan2008a}.

What is the meaning of the functions $\phi_g$ of Theorem \ref{theorem:jacobi form}? One answer to this question is furnished by physics. More specifically, Theorem \ref{theorem:jacobi form} can be interpreted as a statement about supersymmetric non-linear sigma models on K3 surfaces. For as explained by Huybrechts in \cite{2013arXiv1309.6528H}, the analyses of \cite{MR1479699,MR1416354,NahWen} (cf. \cite{GHV} for a concise account) suggest the conjecture that the pairs $(X,\sigma)$ with $X$ and $\sigma$ as in Theorem \ref{theorem:jacobi form} are in natural correspondence with the supersymmetric non-linear sigma models on complex projective K3 surfaces.

\subsection{Sigma Models}

Witten introduced \cite{MR970278,MR885560} a construction which attaches a weak Jacobi form for $\SL_2(\ZZ)$ to any supersymmetric non-linear sigma model, called the {\em elliptic genus} of the model in question. It turns out (cf. (\ref{eqn:tg-K3EG})) that $\phi_g$ is exactly the K3 elliptic genus when $g=e$ is the identity autoequivalence. (In particular, both $\phi_e$ and the K3 elliptic genus are independent of the choices of $X$ and $\sigma$.)

Generalizing this, the analysis of \cite{GHV} suggests that we can expect to obtain a Jacobi form with level, called a {\em twined elliptic genus}, from any supersymmetry preserving automorphism of a supersymmetric non-linear sigma model. In terms of the pairs $(X,\sigma)$, such automorphisms should correspond to symplectic autoequivalences that preserve $\sigma$. Thus it is natural to compare the $\phi_g$ to twined elliptic genera of supersymmetric non-linear K3 sigma models.

Unfortunately it is generally a difficult matter to compute twined K3 elliptic genera, for the Hilbert spaces attached to supersymmetric non-linear K3 sigma models are only known in a few special cases. However, it has been shown recently by Gaberdiel--Hohenegger--Volpato \cite{GHV} (cf. also \cite{2013arXiv1309.6528H}) that any group of supersymmetry preserving automorphisms of such a model can be embedded in the Conway group $\Co_0$. (Actually, $\Co_0$ here can be replaced by $\Co_1$, but it seems to be more natural to regard $\Co_0$ as the operative group.) More specifically (and subject to some assumptions about the moduli space of K3 sigma models), the groups of supersymmetry preserving automorphisms of K3 sigma models are exactly the subgroups of $\Co_0$ that pointwise fix a $4$-dimensional subspace of $\LL\otimes_\ZZ\RR$, according to \cite{GHV}.

Thus there is hope that a suitably defined $\Co_0$-module may be used to recover all the twined K3 elliptic genera, bypassing the explicit construction of super conformal field theories attached to K3 sigma models. The present work furnishes strong evidence that this is indeed the case, and that $\vsnt$ is precisely the $\Co_0$-module to consider. Indeed, about half of the conjugacy classes in $\Co_0$ that fix a $4$-space in $\LL\otimes_\ZZ\RR$ appear in the explicit computations of \cite{GHV,Gaberdiel:2012um,2014arXiv1403.2410V}, and we find precise agreement with the $\phi_g$, defined via $\vsnt$, in every case.

This leads us to the following conjecture, indicating one precise sense in which $\vsn$ may serve as a universal object for K3 sigma models. 
\begin{sconjecture}[\ref{conj:sigmod}]
The twined elliptic genus attached to any supersymmetry preserving automorphism of a supersymmetric non-linear K3 sigma model coincides with $\phi_g$ for some $g\in \Co_0$ fixing a $4$-space in $\LL\otimes_\ZZ\RR$.
\end{sconjecture}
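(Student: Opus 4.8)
The plan is to reduce the conjecture to a finite comparison of Fourier coefficients, exploiting the rigidity of the (twined) elliptic genus together with the finite-dimensionality of the relevant space of Jacobi forms. The twined elliptic genus attached to a supersymmetry preserving automorphism $g$ of a K3 sigma model is a trace of the form $\operatorname{tr}_{\mathcal H_{RR}}\bigl(g\,(-1)^F y^{J_0}q^{L_0-c/24}\bar q^{\bar L_0-\bar c/24}\bigr)$ over the Ramond--Ramond sector. By the standard argument underlying Witten's construction \cite{MR970278,MR885560} the right-moving oscillator contributions cancel, leaving a holomorphic function of $\tau$; and by the analysis of \cite{GHV} this function is a weak Jacobi form of weight $0$, index $1$, and a level $N$ determined by the image of $g$ in $\Co_0$. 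Since Theorem~\ref{theorem:jacobi form} places $\phi_g$ in the same finite-dimensional space of forms, it suffices to show that the two forms agree on enough Fourier coefficients to pin down an element of that space.

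First I would make precise the dependence of the twined genus on the sigma model. The elliptic genus is invariant under deformations that preserve supersymmetry, and the same rigidity holds for the twining by $g$ along the sublocus of the moduli space on which $g$ continues to act as a symmetry. Hence the twined genus is locally constant on the $g$-equivariant moduli space, and on each connected component depends only on the $\Co_0$-conjugacy class determined by $g$ via the identification of \cite{GHV} of symmetry groups with the subgroups of $\Co_0$ fixing a $4$-space in $\LL\otimes_\ZZ\RR$. Next I would pass to distinguished points of the moduli space---Gepner points, torus and K3 orbifold points, and in particular the point realized by the isomorphism between $\vsnt$ and a sigma model attached to a distinguished K3 surface that we establish elsewhere in this paper---where the Ramond--Ramond Hilbert space is known explicitly and the action of $g$ can be read off from the geometry. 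At such points the twining can be computed, and the leading coefficients compared with those of $\phi_g$.

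The comparison of coefficients is then governed by low-dimensional data. A weak Jacobi form of weight $0$ and index $1$ has its most singular behaviour concentrated in the $q^0$ term, which takes the form $a_g(y+y^{-1})+b_g$; this term is controlled by the action of $g$ on the Ramond ground states---equivalently by the $g$-twined $\chi_y$-genus---and is thus determined by the eigenvalues $\varepsilon_i$ of $g$ on the $24$-dimensional representation together with the character value $\chi_g$ of (\ref{eqn:intro-chig}). The first few positive powers of $q$ are in turn fixed by the $g$-action on the low-lying excited BPS states. Because the space of weak Jacobi forms of weight $0$, index $1$, and level $N$ with the prescribed multiplier is finite-dimensional, agreement on these finitely many coefficients forces $\phi^{\mathrm{K3}}_g=\phi_g$ for every class $[g]$ realized at a computable point, which already recovers the cases tabulated in \cite{GHV,Gaberdiel:2012um,2014arXiv1403.2410V}.

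The main obstacle is twofold. Mathematically, the twined genus of a general K3 sigma model is not yet a rigorously defined object, since the sigma models themselves have been constructed only in special cases; a complete proof therefore presupposes a rigorous theory of these superconformal field theories and of the deformation invariance invoked above. Even granting this, the harder step is to cover every conjugacy class fixing a $4$-space---including the roughly half that do not appear in the existing computations---either by exhibiting a solvable model on each connected component of the $g$-equivariant moduli space, or by a uniform argument connecting an arbitrary symmetric model to such a model without changing the $\Co_0$-conjugacy class. Establishing this global control over the moduli space, rather than the coefficient matching at any single point, is where the real difficulty lies.
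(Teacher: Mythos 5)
The statement you are addressing is Conjecture~\ref{conj:sigmod}, and the paper offers no proof of it: its support consists of (i) the exact agreement, recorded in Table~\ref{table:4spacecoin}, between $\phi_g$ and every twined K3 elliptic genus computed in \cite{GHV,Gaberdiel:2012um,2014arXiv1403.2410V}, and (ii) the structural evidence of Proposition~\ref{prop:sigmod-Virmodisom}. Your proposal is therefore best judged as a road map rather than a proof, and to your credit you say so explicitly. The skeleton you describe---deformation invariance of the twined genus on the $g$-symmetric locus, the identification of sigma-model symmetry groups with $4$-space-fixing subgroups of $\Co_0$ via \cite{GHV}, finite-dimensionality of the space of weak Jacobi forms of weight $0$ and index $1$ with level (Proposition~\ref{prop:autfms-phiCFs}), and coefficient matching at solvable points---is the natural strategy and is consistent with how the paper assembles its evidence. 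Your diagnosis of the two genuine obstructions is also accurate: general K3 sigma models and their twined genera are not rigorously constructed objects, and roughly half the relevant $\Co_0$ classes are not realized at any currently computable point.

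Two concrete cautions. First, you lean on ``the point realized by the isomorphism between $\vsnt$ and a sigma model attached to a distinguished K3 surface'' as a place where ``the action of $g$ can be read off.'' Proposition~\ref{prop:sigmod-Virmodisom} gives strictly less than this: it is an isomorphism of \emph{Virasoro modules} with respect to the diagonal Virasoro element $\omega^{\mathcal{D}}$, not of super vertex operator algebras or superconformal field theories, and it does not intertwine the $\Co_0$-action on $\vsnt$ with the symmetry action on $\mathcal{H}_{X,\text{R-R}}$, nor identify the $\U(1)$ elements defining the $y$-grading. So it cannot by itself be used to compute a twined genus on the sigma-model side. Second, your rigidity argument only makes the twined genus locally constant; the $g$-equivariant moduli space may be disconnected, and nothing in the argument forces different components carrying the same $\Co_0$-conjugacy class to yield the same twining. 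This is precisely the ``global control'' you flag at the end, and it is the substantive mathematical content that separates your outline from a proof---which is why the statement remains, in the paper as in your attempt, a conjecture.
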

It will be interesting to see if $\vsn$ cannot ultimately shed light on more subtle aspects of K3 sigma models, beyond their twined elliptic genera. 

It is at first surprising that the central charge of $\vsn$ is twice that of the super conformal field theories attached to K3 sigma models, i.e. $12$ rather than $6$. In \S\ref{sec:sigmod} we give an explanation for this discrepancy by demonstrating an isomorphism of Virasoro-modules between $\vsn$ and the Neveu--Schwarz sector of the super conformal field theory attached to a particular, distinguished K3 sigma model, which has been considered earlier in \cite{Wen2002,2013arXiv1309.4127G}. See Proposition \ref{prop:sigmod-Virmodisom}. Note that we naturally obtain a Virasoro-module structure of central charge $12$ on the sigma model by taking the diagonal copy of the Virasoro algebra, within the two commuting copies that act on left and right movers, respectively.

For concreteness we have chosen to formulate our main results in terms of derived categories of coherent sheaves, and their stability conditions, rather than sigma models. We refer the reader to \cite{MR2405589,MR2483930,MR1957548} for introductory expositions of the deep connection between these notions.

\subsection{Mathieu Moonshine}

It is a striking fact that the K3 elliptic genus is involved in another, more recently discovered moonshine phenomenon, which is, at first glance, seemingly unrelated to the monster. Namely, it was observed by Eguchi--Ooguri--Tachikawa \cite{Eguchi2010} that if the K3 elliptic genus $Z_{\rm K3}=\phi_e$ is written in the form
\begin{gather}
	Z_{\rm K3}(\tau,z)=24\mu(\tau,z) \frac{\vartheta_1(\tau,z)^2}{\eta(\tau)^3}+H^{(2)}(\tau) \frac{\vartheta_1(\tau,z)^2}{\eta(\tau)^3},
\end{gather}
where $\eta$ is the Dedekind eta function (cf. (\ref{eqn:autfms-eta})), $\vartheta_1$ is the usual Jacobi theta function (cf. (\ref{equation:theta1})), and $\mu$ denotes the Appell--Lerch sum
\begin{gather}
	\mu(\tau,z):=\frac{\ii y^{1/2}}{\vartheta_1(\tau,z)}\sum_{n\in\ZZ}(-1)^n\frac{y^nq^{n(n+1)/2}}{1-y q^n},
\end{gather}
where $q=e^{2\pi \ii\tau}$ and $y=e^{2\pi \ii z}$, then $q^{1/8}H^{(2)}(\tau)$ is a power series in $q$ with integer coefficients,
\begin{gather}\label{eqn:um-HFouexp}
	H^{(2)}(\tau)=-2q^{-1/8}+90q^{7/8}+462q^{15/8}+1540q^{23/8}+4554q^{31/8}+11592q^{39/8}+\ldots,
\end{gather}
and the coefficient of each non-polar term appearing in (\ref{eqn:um-HFouexp}) is twice the dimension of an irreducible representation of the largest sporadic simple group of Mathieu, $M_{24}$. (The meaning of the superscript $^{(2)}$ will be elucidated presently.)

Inspired by the monstrous antecedent, it is natural to conjecture the existence of a graded infinite-dimensional $M_{24}$-module
\begin{gather}\label{eqn:intro-K}
	\check K^{(2)}=\bigoplus_{n>0}\check K^{(2)}_{n-1/8},
\end{gather}
such that $H^{(2)}(\tau)=-2q^{-1/8}+\sum_{n>0}\dim (\check K^{(2)}_{n-1/8}) q^{n-1/8}$, and investigate the series
\begin{gather}\label{eqn:intro-HgtrK}
	H^{(2)}_g(\tau):=-2q^{-1/8}+\sum_{n>0}(\tr_{\check K^{(2)}_{n-1/8}}g)q^{n-1/8}
\end{gather}
for $g\in M_{24}$.

The work of Cheng \cite{MR2793423}, Eguchi--Hikami \cite{Eguchi2010a}, and Gaberdiel--Hohenegger--Volpato \cite{Gaberdiel2010a, Gaberdiel2010} determined precise candidates for the (\ref{eqn:intro-HgtrK}), and found, moreover, that if $\chi_g$ denotes the number of fixed points of $g\in M_{24}$ in its unique (up to equivalence) non-trivial permutation action on $24$ points, then 
\begin{gather}\label{eqn:intro-Zg}
	Z^{(2)}_g(\tau,z):=\chi_g\mu(\tau,z)\frac{\vartheta_1(\tau,z)^2}{\eta(\tau)^3}+H^{(2)}_g(\tau) \frac{\vartheta_1(\tau,z)^2}{\eta(\tau)^3}
\end{gather}
is a weak Jacobi form of weight $0$ and index $1$, with some level depending on $g$.

The group $M_{24}$ appears naturally as a subgroup of $\Co_0$, in such a way that the definition of $\chi_g$ just given coincides with (\ref{eqn:intro-chig}) for $g\in M_{24}<\Co_0$, so we may compare the $\phi_g$ of Theorem \ref{theorem:jacobi form} to the weak Jacobi forms $Z^{(2)}_g$ of Mathieu moonshine. Interestingly, $\phi_g=Z^{(2)}_g$ for $g$ in all but $7$ of the $26$ conjugacy classes of $M_{24}$. Cf. Table \ref{table:4spacecoin}. The conjugacy classes of $M_{24}$ for which $\phi_g\neq Z^{(2)}_g$ are those named $3B$, $6B$, $12B$, $21A$, $21B$, $23A$ and $23B$ in \cite{atlas}. Note that $\phi_g$ is not even defined for $g$ in any of the last $5$ of these, since such elements of $\Co_0$ do not pointwise fix a $4$-space in $\LL\otimes_\ZZ\RR$.

Regarding the $M_{24}$-module $\check K^{(2)}$ of (\ref{eqn:intro-K}), Gannon has proven \cite{Gannon:2012ck} that the candidate $H^{(2)}_g$ determined in \cite{MR2793423,Eguchi2010a,Gaberdiel2010a,Gaberdiel2010} are indeed the graded trace functions attached to a graded $M_{24}$-module, but there is, as yet, no analogue for $\check K^{(2)}$ of the vertex algebraic constructions of $\vn$ or $\vsn$. The fact that $\phi_g$ recovers $Z^{(2)}_g$ for so many $g\in M_{24}$ suggests that $\vsn$ may play an important role in determining such a concrete construction.

See \cite{MR2985326} for a detailed review of Mathieu moonshine, including explicit descriptions of the $H^{(2)}_g$. The $H^{(2)}_g$ are examples of mock modular forms of weight $1/2$, a notion which has arisen fairly recently, thanks to the foundational work of Zwegers \cite{zwegers} on Ramanujan's mock theta functions \cite{MR947735,MR2280843}, contemporaneous work \cite{BruFun_TwoGmtThtLfts} of Bruinier--Funke on harmonic Maass forms, and subsequent contributions by Bringmann--Ono \cite{BringmannOno2006} and Zagier \cite{zagier_mock}. We refer to \cite{zagier_mock} and \cite{Ono_unearthing} for introductory accounts of mock modular forms. The $H^{(2)}_g$ for $g\in M_{24}$ have been constructed uniformly in \cite{Cheng2011}, and related results appear in \cite{jacteg}.

\subsection{Umbral Moonshine}

The superscripts in $\check K^{(2)}$, $H^{(2)}_g$ and $Z^{(2)}_g$ indicate that Mathieu moonshine is but one 
case of a more generally defined theory. Indeed, the observations of \cite{Eguchi2010} were extended in \cite{UM,UMNL} (cf. also \cite{mumcor}), to an association of (vector-valued) mock modular forms $H^{(\ell)}_g=(H^{(\ell)}_{g,r})$ to conjugacy classes $[g]$ in finite groups $G^{(\ell)}$ (with $G^{(\ell)}=M_{24}$ for $\ell=2$), for certain symbols $\ell$, called {\em lambencies}. The resulting collection of relationships between finite groups and mock modular forms is now known as umbral moonshine.

The lambencies of \cite{UMNL} are in correspondence with the $23$ (non-empty) simply-laced root systems that arise in even self-dual positive-definite lattices of rank $24$. These are the so-called {Niemeier root systems} (cf. \S\ref{sec:lat}). For example, if $n$ is a divisor of $24$ and $k=24/n$, then $\ell=n+1$ corresponds to the union of $k$ copies of the $A_n$ root system, denoted $A_n^k$. In particular, $\ell=2$ corresponds to $A_1^{24}$. 

The group $G^{(\ell)}$ is, by definition, the outer automorphism group of the self-dual lattice $N^{(\ell)}$ whose Niemeier root system corresponds to $\ell$. That is, $G^{(\ell)}:=\Aut(N^{(\ell)})/W^{(\ell)}$ where $W^{(\ell)}$ is the normal subgroup of $\Aut(N^{(\ell)})$ generated by reflections in root vectors. Note that all of these groups $G^{(\ell)}$ embed in $\Co_0$.
 
According to the McKay correspondence \cite{McKay_Corr,MR661802}, the irreducible simply-laced root systems are in correspondence with certain surface singularities called du Val singularities (cf. e.g. \cite{MR543555}). Thus the governing role of simply-laced root systems in umbral moonshine suggests a geometric interpretation involving non-smooth K3 surfaces equipped with configurations of du Val singularities. Evidence in support of this idea is developed in \cite{2014arXiv1406.0619C}. A number of the weak Jacobi forms $\phi_g$ constructed here appear also in \cite{2014arXiv1406.0619C}.
 
As in the case that $\ell=2$, the particular properties of the mock modular forms $H^{(\ell)}_g$ support the existence of graded infinite-dimensional $G^{(\ell)}$-modules
\begin{gather}\label{eqn:intro-Kell}
	\check K^{(\ell)}=\bigoplus_{r\in I^{(\ell)}}\bigoplus_{\substack{n\in\ZZ\\r^2-4mn\leq0}}\check K^{(\ell)}_{r,n-r^2/4m}
\end{gather}
such that 
\begin{gather}\label{eqn:intro-HelltrKell}
	H^{(\ell)}_{g,r}(\tau)=-2q^{-1/4m}\delta_{r,1}+\sum_{n}(\tr_{\check K^{(\ell)}_{r,n-r^2/4m}}g)q^{n-r^2/4m}
\end{gather}
for $g\in G^{(\ell)}$ and $r\in I^{(\ell)}$, where $m$ is a certain positive integer depending on $\ell$, and $I^{(\ell)}$ is a certain subset of $\{1,\ldots,m-1\}$. (We refer the reader to \cite{UMNL,mumcor} or \S9 of \cite{mnstmlts} for a fuller discussion of $\check K^{(\ell)}$ and its relation to $G^{(\ell)}$ and the $H^{(\ell)}_g$.) 

The existence of  $G^{(\ell)}$-modules $\check K^{(\ell)}$ satisfying (\ref{eqn:intro-HelltrKell}) is one of the main conjectures of umbral moonshine, and has now been proven \cite{umrec} for all Niemeier root systems. A concrete, vertex algebraic construction of $\check K^{(\ell)}$ has been established recently \cite{mod3e8} for the special case that $\ell=30+6,10,15$, which is the lambency corresponding to the root system $E_8^3$. Vertex algebraic constructions of $G^{(\ell)}$-modules closely related to the $\check K^{(\ell)}$ appear in \cite{umvan4} for the lambencies corresponding to $A_3^8$, $A_4^6$, $A_6^4$ and $A_{12}^2$, and in \cite{umvan2} for $D_6^4$, $D_8^3$, $D_{12}^2$ and $D_{24}$.

In the case of $A_{\ell-1}^k$, where $k(\ell-1)=24$, the mock modular forms $H^{(\ell)}_g$, together with certain characters $\chi_g$ and $\bar{\chi}_g$ of $G^{(\ell)}$, can be used to define weak Jacobi forms $Z^{(\ell)}_g$ of weight 0 and index $\ell-1$ (and some level depending on $g$), in a natural way. (See \S4 of \cite{UM} for the details of this construction.)

In \S\ref{sec:um} we present a natural generalization of the construction of $\phi_g$ in \S\ref{sec:tg}, and in so doing attach a weak Jacobi form $\phi^{(\ell)}_g$, of weight $0$ and index $\ell-1$, to any element $g\in \Co_0$ that fixes a $2d$-dimensional subspace of $\LL\otimes_\ZZ\RR$, where $d=2(\ell-1)$. Interestingly, many of the $Z^{(\ell)}_g$ of umbral moonshine are realized as (scalar multiples of) $\phi^{(\ell)}_g$ for suitable $g\in \Co_0$. Thus we have evidence that $\vsn$ may be an important device for realizing a number of the $\check K^{(\ell)}$ explicitly. The particular coincidences between $Z^{(\ell)}_g$ and $\phi^{(\ell)}_g$ are recorded in \S\ref{sec:tables-coins}, Tables \ref{table:4spacecoin} through \ref{table:24spacecoin}.

Surprisingly, $\vsn$ can be used to attach mock modular forms to conjugacy classes in finite groups beyond those arising as $G^{(\ell)}$ for some lambency $\ell$. Indeed, in \cite{2014arXiv1406.5502C} the canonically-twisted $\vsn$-module $\vsnt$ is used to attach $2$-vector-valued mock modular forms of weight $1/2$ to conjugacy classes in any subgroup of $\Co_0$ fixing a $2$-space in $\LL\otimes_\ZZ\RR$. In this way mock modular forms are attached to the conjugacy classes of the sporadic Mathieu groups $M_{23}$ and $M_{12}$, McLaughlin's sporadic group $McL$, and the sporadic group $HS$ of Higman and Higman--Sims (cf. \cite{MR0227269,MR0240193,MR0248215}). An association of mock modular forms to conjugacy classes in subgroups of $\Co_0$ fixing $3$-spaces in $\LL\otimes_\ZZ\RR$ is also considered in \cite{2014arXiv1406.5502C}. Consequently, mock modular forms (of a different kind) are attached to $M_{22}$ and $M_{11}$. See \cite{2015arXiv150307219C} (and its prequel \cite{2014arXiv1412.2804B}) for an extension of this method to subgroups of $\Co_0$ that fix a line in $\LL\otimes_\ZZ\RR$. This analysis associates mock modular forms (of yet another variety) to the sporadic groups $\Co_2$, $\Co_3$ and $M_{24}$.

It is evident from the above mentioned results that $\vsn$ should play an important role in umbral moonshine. Thus the close relationship between Conway moonshine and monstrous moonshine serves to motivate the possibility that monstrous and umbral moonshine are related in a deep and direct way, potentially sharing a common origin. The results of \cite{2014arXiv1403.3712O} also motivate this point of view. See the introduction to \cite{UM} for related discussion.

\subsection{Organization}

We now describe the structure of the paper. Since the main result involves a number of different topics, not typically seen together in a single work, we begin with a number of brief preliminary sections. We recall some basic facts about even self-dual lattices, and also discuss the Conway group in \S\ref{sec:lat}. We then recall modular forms, Jacobi forms and certain special examples of such functions in \S\ref{sec:autfms}. We review the main results from \cite{2013arXiv1309.6528H} on derived equivalences of K3 surfaces in \S\ref{sec:de}. In \S\ref{sec:va} we recall basic definitions in vertex algebra theory, and give a brief description of the Clifford module (a.k.a. free fermion) super vertex algebra construction in \S\ref{sec:cliff}.

The results of our earlier work \cite{vacogm} play an important role here, and we review these next, recalling some useful formulas relating to spin modules in \S\ref{sec:spin}, and the construction of the distinguished super vertex operator algebra $\vsn$ in \S\ref{sec:dist}.

In \S\ref{sec:tg} we establish our main result: a mechanism which attaches a weak Jacobi form to any symplectic derived equivalence of a K3 surface that fixes a suitable stability condition. See Theorem \ref{theorem:jacobi form}. 

We also formulate a conjecture relating the Jacobi forms so arising to twined elliptic genera of K3 sigma models. In brief, all the examples of twined K3 elliptic genera available in the literature are recovered from our construction. This suggests that the super vertex operator algebra $\vsn$ serves as a universal object for K3 sigma models. 

The construction of \S\ref{sec:tg} easily generalizes so as to recover a number of the weak Jacobi forms of umbral moonshine. We discuss this in detail in \S\ref{sec:um}. 

We give some deeper evidence for the conjectural relationship between $\vsn$ and K3 sigma models in \S\ref{sec:sigmod}, by exhibiting an isomorphism of graded vector spaces between $\vsn$ and the super conformal field theory arising from a certain distinguished K3 sigma model.

We present data necessary for the computation of all the Jacobi forms appearing in this work in \S\ref{sec:tables-comps}. We record coincidences between these Jacobi forms and other functions appearing in the context of K3 sigma models, and umbral moonshine, in \S\ref{sec:tables-coins}.

As mentioned earlier, we choose a square root of $-1$ in $\CC$ and denote it by $\ii$. We also set $e(x):=e^{2\pi \ii x}$.

\section{Lattices}\label{sec:lat}

An {\em integral lattice} is a free $\ZZ$-module of finite rank, $L\simeq \ZZ^n$, equipped with a symmetric bilinear form $\langle\cdot\,,\cdot\rangle:L\otimes_\ZZ L:\to \ZZ$. An excellent general reference for lattices is \cite{MR1662447}. Given a field $k$ of characteristic zero, the bilinear form $\langle\cdot\,,\cdot\rangle$ extends naturally to the $n$-dimensional vector space $L\otimes_\ZZ k$ over $k$. The {\em signature} of $L$ is the pair $(r,s)$ where $r$ is the maximal dimension of a positive-definite subspace of $L\otimes_\ZZ\RR$, and $s$ is the maximal dimension of a negative-definite subspace of $L\otimes_\ZZ\RR$. Call $n$ the {\em rank} of $L$, and say $L$ is {\em non-degenerate} if $n=r+s$. Say $L$ is {\em positive-definite} if $s=0$, and {\em negative-definite} if $r=0$. Say $L$ is {\em indefinite} if $rs\neq 0$.

Define the {\em dual} of $L$ by setting
\begin{gather}
	L^*:=\left\{\gamma\in L\otimes_\ZZ\QQ\mid \langle\lambda,\gamma\rangle\in\ZZ,\text{ for all $\lambda\in L$}\right\}.
\end{gather}
Certainly $L^*$ contains $L$. Say that $L$ is {\em self-dual} if $L^*=L$. Observe that a self-dual lattice is necessarily non-degenerate.

Given $\lambda\in L$ call $\langle\lambda,\lambda\rangle$ the {\em square-length} of $\lambda$. 
A lattice $L$ is called {\em even} if all of its square-lengths are even integers. The set of vectors of square-length $\pm 2$ in an even lattice is called its {\em root system}.

Write $\RR^{r,s}$ for a real vector space of dimension $n=r+s$, with elements denoted $x=(x_1,\ldots,x_n)$, equipped with the bilinear form
\begin{gather}
	\langle x,y\rangle=\sum_{i=1}^r x_iy_i - \sum_{i=r+1}^{n} x_iy_i.
\end{gather}
Write $I_{r,s}$ for the lattice in $\RR^{r,s}$ composed of vectors $x=(x_i)$ with integer coordinates. Observe that $I_{r,s}$ is self-dual. One often writes $\ZZ^n$ for $I_{n,0}$.
Assuming $r=s\mod 8$ define an even self-dual lattice $\II_{r,s}$ in $\RR^{r,s}$ by setting
\begin{gather}\label{eqn:lat-IIrsexp}
	\II_{r,s}:=
		\left\{
	x\in \ZZ^{n}\cup(\ZZ+\tfrac12)^n\subset\RR^{r,s}\mid \sum_{i=1}^rx_i=\sum_{i=r+1}^{n}x_i\mod 2
		\right\}.
\end{gather}
(Cf. e.g. \cite{MR640949}.) Taking $r=8$ and $s=0$ we obtain the $E_8$ {\em root lattice}, commonly denoted $E_8$. It is the unique (up to isomorphism) even self-dual lattice of signature $(8,0)$. The lattice $\II_{1,1}$ is often denoted $U$, and sometimes called the {\em hyperbolic plane}.

We refer to Chapter V of \cite{Ser_CrsArth} for a proof of the following fundamental result. 
\begin{theorem}\label{thm:lat-indevsduniq}
Suppose that $L$ is a non-even self-dual integral lattice with signature $(r,s)$. If $rs\neq 0$ then $L\simeq I_{r,s}$.
Suppose that $L$ is an even self-dual lattice with signature $(r,s)$. Then $r=s\mod 8$. If $rs\neq 0$ then $L\simeq \II_{r,s}$. 
\end{theorem}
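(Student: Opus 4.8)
The plan is to establish the two uniqueness assertions by an inductive splitting argument and to derive the congruence $r\equiv s\pmod 8$ in the even case from a Gauss-sum identity; existence of the stated models is already given by the explicit constructions of $I_{r,s}$ and $\II_{r,s}$ above, so only uniqueness and the necessity of the congruence require argument. Throughout I write $U=\II_{1,1}$ for the hyperbolic plane and $I_{1,1}=\langle 1\rangle\oplus\langle -1\rangle$, the two unimodular lattices of signature $(1,1)$.

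The engine of the uniqueness proofs is the existence of isotropic vectors together with a splitting lemma. First I would show that an indefinite unimodular lattice $L$ with $rs\neq 0$ contains a primitive isotropic vector: over $\RR$ the form is isotropic because $L$ is indefinite, and over each $\QQ_p$ the classification of unimodular $\ZZ_p$-lattices together with the product formula for Hilbert symbols shows that the rank, signature, and discriminant $(-1)^s$ leave no local obstruction (this is automatic for rank $\geq 5$, and ranks $2,3,4$ are checked directly). By Hasse--Minkowski there is then a rational isotropic vector, and clearing denominators and dividing by its content yields a primitive integral one. Given a primitive isotropic $e\in L$, unimodularity supplies $f\in L$ with $\langle e,f\rangle=1$; the rank-two sublattice $M=\langle e,f\rangle$ has Gram determinant $-1$, hence is unimodular and splits off orthogonally, $L\cong M\oplus M^{\perp}$, with $M^{\perp}$ unimodular of signature $(r-1,s-1)$. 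When $L$ is even one may adjust $f$ so that $M\cong U$ and $M^{\perp}$ is again even. Iterating reduces $L$ to $W_1\oplus\cdots\oplus W_{\min(r,s)}\oplus L_0$, each $W_i$ of signature $(1,1)$ and $L_0$ definite of rank $|r-s|$. In particular, when $r=s$ the summand $L_0$ is trivial and $L\cong U^{r}=\II_{r,r}$ (even case) is immediate, and the odd case with $r=s$ collapses similarly.

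The substantive point---and the step I expect to be the main obstacle---is that the definite remainder $L_0$ is in general \emph{not} determined up to isomorphism by its rank and type: even definite unimodular lattices already fail uniqueness in rank $16$, where $E_8\oplus E_8\not\cong D_{16}^{+}$, and odd definite unimodular lattices fail it too (for instance $E_8\oplus\langle 1\rangle\not\cong\langle 1\rangle^{9}$, since the two have different numbers of norm-one vectors). What rescues the theorem is the rigidity special to the indefinite setting: distinct definite unimodular lattices of the same rank and type become isomorphic once a summand of signature $(1,1)$ is adjoined. I would obtain this from Eichler's theorem that an indefinite lattice of rank $\geq 3$ is the unique class in its genus, combined with the $p$-adic classification showing that the genus of a unimodular lattice depends only on its rank, signature, and type; ranks $\leq 2$ are then disposed of by hand. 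This identifies $L$ with the canonical model $U^{\min(r,s)}\oplus(\pm E_8)^{|r-s|/8}\cong\II_{r,s}$ in the even case, and with $I_{r,s}$ in the odd case. (An alternative, avoiding strong approximation, is the elementary chain of absorption isomorphisms of Milnor--Husemoller, built on relations such as $\langle 1\rangle\oplus U\cong\langle 1\rangle\oplus\langle 1\rangle\oplus\langle -1\rangle$.)

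Finally, the congruence $r\equiv s\pmod 8$, which is asserted for every even self-dual $L$ and hence must be proved without assuming indefiniteness, follows from the Milgram--van der Blij Gauss-sum formula. Writing $\operatorname{sign}(L)=r-s$, this formula reads $\sum_{x\in L^{*}/L}e\!\left(\tfrac12\langle x,x\rangle\right)=\sqrt{|L^{*}/L|}\,e\!\left(\tfrac18\operatorname{sign}(L)\right)$, where $\langle x,x\rangle$ is the well-defined value in $\QQ/2\ZZ$ attached to the discriminant form of the even lattice $L$. Self-duality makes $L^{*}/L$ trivial, so the left-hand side equals $1$, forcing $e\!\left(\tfrac18\operatorname{sign}(L)\right)=1$ and hence $\operatorname{sign}(L)=r-s\equiv 0\pmod 8$, which is the desired congruence.
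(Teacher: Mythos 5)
Your proposal is correct, but note first that the paper contains no proof of Theorem \ref{thm:lat-indevsduniq} to compare against: it quotes this classical result and refers to Chapter V of Serre's \emph{A Course in Arithmetic} \cite{Ser_CrsArth}, so the only meaningful comparison is with that cited argument. Your skeleton agrees with the classical one---isotropic vectors from Hasse--Minkowski plus local checks, splitting off a unimodular rank-two sublattice containing a primitive isotropic vector, induction---and you correctly isolate the genuine crux, namely that the definite remainder is \emph{not} determined by its rank and type (e.g.\ $E_8\oplus E_8\not\simeq D_{16}^+$), so some rigidity special to the indefinite setting must be supplied. Where you diverge from Serre is in the two substantive inputs. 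For the crux you invoke Eichler's spinor-genus theorem together with the fact that rank, signature and type determine the genus of a unimodular lattice; Serre stays elementary, showing that an indefinite odd unimodular form represents both $+1$ and $-1$ (so $\langle 1\rangle\oplus\langle -1\rangle$ splits off and induction closes the odd case) and then reducing the even indefinite case to the odd one, with no genus theory or strong approximation. For the congruence $r\equiv s\pmod 8$ you invoke the Milgram--van der Blij Gauss-sum formula, whereas the classical elementary route (Serre, and Milnor--Husemoller, which you mention as an alternative) deduces it from the odd classification via the mod-$8$ invariance of the norm of a characteristic vector. Both routes are sound: yours is shorter and more general, since genus theory and Milgram's formula apply far beyond unimodular lattices, but it leans on two large citable theorems; Serre's is longer but self-contained at the level of elementary arithmetic, which is presumably why the paper points there. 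One point to tighten if you want your version airtight: Eichler's theorem as usually stated identifies class with \emph{spinor} genus for indefinite lattices of rank at least $3$, so your conclusion ``unique class in its genus'' additionally requires the standard fact (a routine spinor-norm computation, cf.\ \cite{MR1662447}) that the genus of an indefinite unimodular lattice consists of a single spinor genus.
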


Note that the right hand side of (\ref{eqn:lat-IIrsexp}) defines an integral self-dual lattice so long as $r+s=0\mod 4$. The lattice of rank $n=0\mod 4$ obtained by taking $r=n$ and $s=0$ in (\ref{eqn:lat-IIrsexp}) is called the {\em spin lattice} of rank $n$, and we denote it $D_n^+$. The {\em $D_n$ root lattice} is the intersection $I_{n,0}\cap \II_{n,0}$ (for any positive $n$), and is an even lattice of index $2$ in $D_n^+$. 
\begin{gather}\label{eqn:lat-Dn}
	D_n:=
			\left\{
	x\in \ZZ^{n}\subset\RR^{n,0}\mid \sum_{i=1}^nx_i=0\mod 2
		\right\}.
\end{gather}

We have isomorphisms $D_4^+\simeq \ZZ^4$ and $D_8^+\simeq E_8$, and $D_{12}^+$ is the unique (up to isomorphism) self-dual lattice of signature $(12,0)$ having no vectors with square-length $1$. The lattices $D_{16}^+$ and $E_8^{\oplus 2}$ are the only even self-dual lattices of signature $(16,0)$.

According to Theorem \ref{thm:lat-indevsduniq} we have 
\begin{gather}\label{eqn:lat-LUII}
	L\oplus U\simeq \II_{25,1}
\end{gather} 
for $L=E_8^{\oplus 3}$. But there are in fact $24$ choices for $L$ (up to isomorphism) that solve (\ref{eqn:lat-LUII}), according to Niemeier's classification \cite{Nie_DefQdtFrm24} of even self-dual definite lattices of rank $24$. (Cf. also \cite{MR558941} and Chapter 16 of \cite{MR1662447}.) Distinguished amongst these is the {\em Leech lattice}, named for its discoverer (cf. \cite{Lee_SphPkgHgrSpc,Lee_SphPkgs}) and denoted here by $\Lambda$, which is the unique even self-dual lattice of signature $(24,0)$ with an empty root system (i.e. no vectors of square-length $2$).

The uniqueness of the Leech lattice was proven by Conway \cite{Con_ChrLeeLat}. Conway also investigated its automorphism group \cite{MR0237634,MR0248216} and discovered three new sporadic simple groups in the course of this, $\Co_1$, $\Co_2$ and $\Co_3$. Define the {\em Conway group} by setting
\begin{gather}\label{eqn:lat-Co0}
	\Co_0:=\Aut(\LL).
\end{gather} 
Then $\Co_0$ is not simple, for its center is non-trivial, generated by $-\Id$. But the quotient group
\begin{gather}\label{eqn:lat-Co1}
\Co_1:=\Co_0/\{\pm \Id\}
\end{gather}
is simple, and is the largest sporadic simple Conway group. The groups $\Co_2$ and $\Co_3$ may be realized as the stabilizers in $\Co_0$ of vectors in $\LL$ with square-length equal to $4$ or $6$, respectively.

Given a lattice $L$ with signature $(r,s)$, write $L(-1)$ for the lattice of signature $(s,r)$ obtained by multiplying the bilinear form on $L$ by $-1$. Then, for example, if $L$ is even self-dual with signature $(k,16+k)$ for some positive integer $k$, we have
\begin{gather}\label{eqn:lat-II319}
	L\simeq \II_{k,16+k}\simeq E_8(-1)^{\oplus 2}\oplus U^{\oplus k},
\end{gather}
as a consequence of Theorem \ref{thm:lat-indevsduniq}.

We call $\Lambda(-1)$ the {\em negative-definite Leech lattice}.

An {\em embedding} of lattices $K\to L$ is an embedding of abelian groups $\iota:K\to L$ such that $\langle \lambda,\mu\rangle_K=\langle\iota(\lambda),\iota(\mu)\rangle_L$ for $\lambda,\mu\in K$. A {\em primitive embedding} is an embedding $\iota:K\to L$ such that the quotient group $L/\iota(K)$ is torsion-free.

\section{Modular Forms}\label{sec:autfms}

Here we recall some basic facts about modular forms and Jacobi forms. 
For $\tau\in\HH$ and $z\in\CC$ we use the notation $q:=e(\tau)$ and $y:=e(z)$, where $e(x):=e^{2\pi \ii x}$.

Recall (cf. e.g. \cite{MR1193029,Shimura}) that a holomorphic function $f:\HH\to \CC$ is a called an {\em unrestricted modular form} of weight $k$ for a group $\Gamma<\SL_2(\RR)$ if 
\begin{gather}\label{eqn:autfms-unrestmodfm}
	f\left(\frac{a\tau+b}{c\tau+d}\right)\frac{1}{(c\tau+d)^k}=f(\tau)
\end{gather}
for all $\left(\begin{smallmatrix}a&b\\c&d\end{smallmatrix}\right)\in \Gamma$. Assume for simplicity that $\Gamma$ is commensurable with $\SL_2(\ZZ)$. Then the action of $\Gamma$ on $\HH$ extends naturally to $\wh{\QQ}:=\QQ\cup\{\infty\}$, and the orbits of $\Gamma$ on $\wh{\QQ}$ are called its {\em cusps}. The orbit containing $\infty$ is called the {\em infinite cusp} of $\Gamma$, and the {\em modular group} $\SL_2(\ZZ)$ has only the infinite cusp.

Say that $f$ as in (\ref{eqn:autfms-unrestmodfm}) is a {\em weakly holomorphic modular form} if it has at most exponential growth at cusps. This amounts to the condition that if $\sigma\in \SL_2(\ZZ)$ then $f(\sigma\tau)$ admits a Laurent expansion in $q^{1/w}$, for some positive integer $w$. If the Laurent expansions of the $f(\sigma\tau)$ are actually Taylor series in $q^{1/w}$, so that $f(\sigma\tau)=O(1)$ as $\Im(\tau)\to \infty$, then we say that $f$ is a {\em modular form}. A {\em cusp form} satisfies $f(\sigma\tau)\to 0$ as $\Im(\tau)\to \infty$, for every $\sigma\in \SL_2(\ZZ)$.

Write $M_{k}(\Gamma)$ for the space of modular forms of weight $k$ for $\Gamma$. Write $S_k(\Gamma)$ for the subspace of cusp forms.

The \emph{Eisenstein series} $E_k$ are a family of modular forms for the full modular group $SL_2(\ZZ)$. For $k$ even and greater than 2, the Eisenstein series $E_k$ is
defined by
\be\label{equation:eisenstein series}
E_k(\tau):=\sum_{\substack{m,n\in\ZZ \\ (m,n)\neq(0,0)}}\frac{1}{(m\tau+n)^k},
\ee
and admits a Fourier expansion
\be\label{equation:eisenstein q-series}
E_k(\tau)=1+\frac{2}{\zeta(1-k)}\sum_{n>0}\sigma_{k-1}(n)q^n,
\ee
where $\sigma_s(n):=\sum_{d\mid n} d^s$. When $k=2$ the series in (\ref{equation:eisenstein series}) is not absolutely convergent, but converges conditionally to (\ref{equation:eisenstein q-series}), and we define $E_2$ by (\ref{equation:eisenstein q-series}). The conditional convergence prevents $E_2$ from being a modular form, but it is a {\em quasi-modular form}, satisfying 
\begin{gather}\label{eqn:autfms-E2xfm}
	E_2\left(\frac{a\tau+b}{c\tau+d}\right)\frac{1}{(c\tau+d)^2}+\frac{3}{\pi^2}\frac{2\pi \ii c}{(c\tau+d)}=E_2(\tau)
\end{gather}
for $\left(\begin{smallmatrix}a&b\\c&d\end{smallmatrix}\right)\in \SL_2(\ZZ)$. (Cf. e.g. Proposition 6 of \cite{MR2409678}.)

We make extensive use of the {\em Dedekind eta function}, a modular form of weight $\frac{1}{2}$ (with non-trivial multiplier), defined by
\be\label{eqn:autfms-eta}
\eta(\tau):=q^{1/24}\prod_{n>0}(1-q^n).
\ee
Related is Ramanujan's {\em Delta function}, a cusp form of weight 12 for $\SL_2(\ZZ)$, defined by
\be\label{eqn:autfms-Delta}
\Delta(\tau):=\eta(\tau)^{24}.
\ee

In this work, a modular form with {\em level} is a modular form for some $\Gamma_0(N)$,
\be
\Gamma_0(N):=\left\{\begin{pmatrix} a & b \\ c & d\end{pmatrix} \in SL_2(\ZZ)\mid c=0\mod N\right\}.
\ee
Note that if $f(\tau)$ is a modular form with level $N$ (i.e. a modular form for $\Gamma_0(N)$), then $f(h\tau)$ is a modular form with level $hN$.

The functions $\Lambda_N$ are a family of modular forms of weight 2 with level, defined by
\be\label{eqn:autfms-LambdaN}
\Lambda_N(\tau):=\frac{N}{2\pi i}
\frac{{\rm d}}{{\rm d}\tau}\log\left(\frac{\eta(N\tau)}{\eta(\tau)}\right)=\frac{N}{24}\left(NE_2(N\tau)-E_2(\tau)\right).
\ee
One easily checks using (\ref{eqn:autfms-E2xfm}) that $\Lambda_N\in M_2(\Gamma_0(N))$.

For later use we note the basic identities,
\begin{gather}
	\Lambda_4(\tau)=4\Lambda_2(2\tau)+2\Lambda_2(\tau),\quad
	\Lambda_4(\tau+1/2)=8\Lambda_2(2\tau)-2\Lambda_2(\tau).
\end{gather}
In particular, $\Lambda_4(\tau+1/2)$ is also a modular form for $\Gamma_0(4)$. Really, this is unsurprising because $\Gamma_0(4)$ is normalized by the matrix $\left(\begin{smallmatrix}1&1/2\\0&1\end{smallmatrix}\right)$. More generally, we have the following beautiful description of the full normalizer of $\Gamma_0(N)$ from \cite{MR554399}.

Given a positive integer $N$, let $h$ denote the largest divisor of $24$ such that $h^2$ divides $N$. Set $n=N/h$. Then the normalizer of $\Gamma_0(N)$ in $\SL_2(\RR)$ is composed of the matrices 
\begin{gather}\label{eqn:autfms-adebcnh}
	\frac{1}{\sqrt{e}}
	\begin{pmatrix}
	ae&b/h\\cn&de
	\end{pmatrix}
\end{gather}
where $e$ is an {\em exact} divisor of $n/h$ (i.e. $e|(n/h)$ and $(e,n/eh)=1$), and $a,b,c,d\in \ZZ$ are chosen so that $ade^2-bcn/h=e$.

We write $\Gamma_0(n|h)$ for the set of matrices (\ref{eqn:autfms-adebcnh}) with $e=1$. It is a subgroup of $\SL_2(\RR)$ that is conjugate to $\Gamma_0(n/h)$. For a fixed non-trivial exact divisor $e|(n/h)$, the matrices (\ref{eqn:autfms-adebcnh}) comprise an {\em Atkin--Lehner involution} of $\Gamma_0(n|h)$. (Really, an Atkin--Lehner involution is a coset of $\Gamma_0(n|h)$ in its normalizer, an involution in the sense that it defines an order $2$ element of the quotient group $N(\Gamma_0(n|h))/\Gamma_0(n|h)$.)

Assume now that $\Gamma$ is a subgroup of $\SL_2(\ZZ)$. We call a holomorphic function $\phi:\HH\times\CC\to \CC$ an {\em unrestricted Jacobi form} of weight $k$ and index $m$ for $\Gamma$ if it satisfies
\begin{gather}
	\phi\left(\frac{a\tau+b}{c\tau+d},\frac{z}{c\tau+d}\right)\frac{1}{(c\tau+d)^k}e\left(-m\frac{cz^2}{c\tau+d}\right)=\phi(\tau,z),
	\label{eqn:autfms-jacxfmmod}
	\\
	\phi(\tau,z+\lambda \tau+\mu)q^{m\lambda^2}y^{2m\lambda}=\phi(\tau,z),
	\label{eqn:autfms-jacxfmell}
\end{gather}
for $\left(\begin{smallmatrix}a&b\\c&d\end{smallmatrix}\right)\in\Gamma$ and $(\lambda,\mu)\in\ZZ^2$, where $q=e(\tau)$ and $y=e(z)$. For $\phi$ an unrestricted Jacobi form and $\sigma\in\SL_2(\ZZ)$ we have
\begin{gather}	\label{eqn:autfms-phicnr}
	\phi(\sigma\tau,z)=\sum_{n,r\in\ZZ}c_\sigma(n/w,r)q^{n/w}y^r
\end{gather}
for some $c_\sigma(n,r)\in\CC$. Say that $\phi$ is a {\em weak Jacobi form} if $c_\sigma(n/w,r)=0$ whenever $n/w<0$, for all $\sigma\in \SL_2(\ZZ)$. Note that $c_\sigma(n/w,r)$ differs from $c_{\sigma'}(n/w,r)$ only by a root of unity when $\sigma'\sigma^{-1}\in\Gamma$. So it suffices to check the $c_\sigma(n/w,r)$ for just one representative $\sigma$ of each right coset of $\Gamma$ in $\SL_2(\ZZ)$.

Good references for Jacobi forms include \cite{Dabholkar:2012nd} and \cite{eichler_zagier}. Jacobi forms occur naturally as Fourier coefficients of Siegel Modular forms (cf. \cite{eichler_zagier,feingold_frenkel}), but that manifestation will not play an explicit role here.

In this work, a weak Jacobi form with {\em level} $N$ is a weak Jacobi form for $\Gamma_0(N)$.

Particularly useful for writing Jacobi forms down explicitly are the four {\em Jacobi theta functions}, defined as
\begin{align}
\begin{split}\label{equation:theta1}
\vartheta_1(\tau,z) & :=-\ii\sum_{n\in \ZZ}(-1)^n y^{n+1/2} q^{(n+1/2)^2/2}, \\
\vartheta_2(\tau,z) & :=\sum_{n\in\ZZ}y^{n+1/2}q^{(n+1/2)^2/2}, \\
\vartheta_3(\tau,z) & :=\sum_{n\in\ZZ}y^n q^{n^2/2}, \\
\vartheta_4(\tau,z) & :=\sum_{n\in\ZZ}(-1)^n y^n q^{n^2/2}, \\
\end{split}
\end{align}
and admitting the product formulas
\begin{align}
\begin{split}\label{eqn:autfms-theta1prod}
\vartheta_1(\tau,z) 
& =-\ii q^{1/8}y^{1/2}(1-y^{-1})\prod_{n>0}(1-y^{-1}q^n)(1-yq^n)(1-q^n), \\
\vartheta_2(\tau,z)
& =q^{1/8}y^{1/2}(1+y^{-1})\prod_{n>0}(1+y^{-1}q^n)(1+yq^n)(1-q^n), \\
\vartheta_3(\tau,z) 
& =\prod_{n>0}(1+y^{-1}q^{n-1/2})(1+y q^{n-1/2})(1-q^n), \\
\vartheta_4(\tau,z)
& =\prod_{n>0}(1-y^{-1}q^{n-1/2})(1-yq^{n-1/2})(1-q^n), \\
\end{split}
\end{align}
according to the Jacobi triple product identity.

The first examples of weak Jacobi forms with level $1$ are $\phi_{0,1}$ and $\phi_{-2,1}$, defined by
\be\label{eqn:autfms:phi01}
\phi_{0,1}(\tau,z):=4\left(\frac{\vartheta_2(\tau,z)^2}{\vartheta_2(\tau,0)^2}+\frac{\vartheta_3(\tau,z)^2}{\vartheta_3(\tau,0)^2}+\frac{\vartheta_4(\tau,z)^2}{\vartheta_4(\tau,0)^2}\right)
\ee
and
\be\label{eqn:autfms:phi-21}
\phi_{-2,1}(\tau,z):=-\frac{\vartheta_1(\tau,z)^2}{\eta(\tau)^6}.
\ee
The subscripts indicate weight and index, respectively. Note that $\phi_{0,1}(\tau,0)=12$ and $\phi_{-2,1}(\tau,0)=0$, which is consistent with the facts that all modular forms of weight $0$ are constant, and all modular forms of negative weight are $0$.

Proposition 6.1 of \cite{Prop6.1} states that any weak Jacobi form of even weight can be written as a polynomial in $\phi_{0,1}$ and $\phi_{-2,1}$ with modular form coefficients. We record the following special cases of this for use later on.
\begin{proposition}\label{prop:autfms-phiCFs}
A holomorphic function $\phi:\HH\times\CC\to \CC$ is a weak Jacobi form of weight $0$ and index $m$ for $\Gamma$ if and only if there exists $C\in \CC$ and modular forms $F_{2j}\in M_{2j}(\Gamma)$, for $1\leq j\leq m$, such that
\begin{gather}\label{eqn:autfms-phiCFs}
	\phi(\tau,z)=C\phi_{0,1}(\tau,z)^m+\sum_{j=1}^m F_{2j}(\tau)\phi_{-2,1}(\tau,z)^j\phi_{0,1}(\tau,z)^{m-j}.
\end{gather}
\end{proposition}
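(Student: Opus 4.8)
The plan is to deduce Proposition \ref{prop:autfms-phiCFs} from the cited structural result, Proposition 6.1 of \cite{Prop6.1}, by specializing it to weight $0$ and reading off the constraints imposed on the modular-form coefficients. The general theorem tells us that any weak Jacobi form of even weight $k$ for $\Gamma$ is a polynomial in $\phi_{0,1}$ and $\phi_{-2,1}$ with coefficients that are modular forms for $\Gamma$. Since $\phi_{0,1}$ has weight $0$ and index $1$, while $\phi_{-2,1}$ has weight $-2$ and index $1$, any monomial $\phi_{-2,1}^{\,j}\phi_{0,1}^{\,i}$ has weight $-2j$ and index $i+j$. To produce a form of weight $0$ and index $m$ we must multiply such a monomial by a modular form of weight $2j$, and we need $i+j=m$; writing $i=m-j$ gives precisely the monomials $\phi_{-2,1}^{\,j}\phi_{0,1}^{\,m-j}$ appearing in (\ref{eqn:autfms-phiCFs}).

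First I would establish the reverse (easy) direction: every function of the form (\ref{eqn:autfms-phiCFs}) is a weak Jacobi form of weight $0$ and index $m$ for $\Gamma$. This is immediate from the transformation laws (\ref{eqn:autfms-jacxfmmod}) and (\ref{eqn:autfms-jacxfmell}), since $\phi_{0,1}$ and $\phi_{-2,1}$ are weak Jacobi forms for $\SL_2(\ZZ)$ (hence for $\Gamma$) of the stated weights and indices, and the $F_{2j}\in M_{2j}(\Gamma)$ are modular forms for $\Gamma$; weights and indices add under multiplication, so each summand has weight $(2j)+(-2j)+0=0$ and index $j+(m-j)=m$, while the constant term $C\phi_{0,1}^{\,m}$ has weight $0$ and index $m$ as well. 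The weak growth condition at the cusps is preserved because $\phi_{0,1},\phi_{-2,1}$ are weak (their $q$-expansions involve no negative powers) and modular forms are holomorphic at the cusps.

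For the forward direction I would invoke Proposition 6.1 of \cite{Prop6.1} to write the given $\phi$ of weight $0$ and index $m$ as a sum of terms $G_{j}(\tau)\,\phi_{-2,1}^{\,j}\phi_{0,1}^{\,i}$ with $G_j$ modular for $\Gamma$, then match weights and indices. Matching the index forces $i+j=m$ in every term, and matching the weight forces $G_j$ to have weight $2j$, i.e. $G_j=F_{2j}\in M_{2j}(\Gamma)$; the $j=0$ term is the constant multiple $C\phi_{0,1}^{\,m}$, since $M_0(\Gamma)=\CC$ (all weight-$0$ modular forms are constant, as noted after (\ref{eqn:autfms:phi-21})). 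The one point requiring a little care, and the only plausible obstacle, is ensuring that this decomposition is genuinely unique so that the bidegree bookkeeping is unambiguous; this follows from the algebraic independence of $\phi_{0,1}$ and $\phi_{-2,1}$ over the ring of modular forms, which underlies the structure theorem of \cite{Prop6.1} and lets us isolate the homogeneous component of each bidegree $(0,m)$. With uniqueness in hand the coefficient matching is forced, giving exactly (\ref{eqn:autfms-phiCFs}) and completing the proof.
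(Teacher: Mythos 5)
Your proposal is correct and follows essentially the same route as the paper, which offers no separate proof at all: it simply records Proposition \ref{prop:autfms-phiCFs} as a special case of Proposition 6.1 of \cite{Prop6.1}, exactly the specialization you carry out. Your additional bookkeeping (weight/index matching, the easy converse direction, and the appeal to algebraic independence of $\phi_{0,1}$ and $\phi_{-2,1}$ over the ring of modular forms) is the natural filling-in of that citation and is sound.
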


We conclude this section with formulas that illustrate (\ref{eqn:autfms-phiCFs}) explicitly for the particular combinations of Jacobi theta functions appearing in (\ref{eqn:autfms:phi01}) and (\ref{eqn:autfms:phi-21}).
\begin{lemma}\label{lemma:theta function identities}
We have the following identities.
\begin{gather}\label{equation:theta1 identity}
	\phi_{-2,1}(\tau,z)
	=
	-\frac{\vartheta_1(\tau,z)^2}{\eta(\tau)^6}
\end{gather}
\be\label{equation:theta2 identity}
\frac{1}{12}\phi_{0,1}(\tau,z)+2\Lambda_2(\tau)\phi_{-2,1}(\tau,z)=\frac{\vartheta_2(\tau,z)^2}{\vartheta_2(\tau,0)^2}
\ee
\be\label{equation:theta3 identity}
\frac{1}{12}\phi_{0,1}(\tau,z)-\Lambda_2(\tau/2+1/2)\phi_{-2,1}(\tau,z)=\frac{\vartheta_3(\tau,z)^2}{\vartheta_3(\tau,0)^2}
\ee
\be\label{equation:theta4 identity}
\frac{1}{12}\phi_{0,1}(\tau,z)-\Lambda_2(\tau/2)\phi_{-2,1}(\tau,z)=\frac{\vartheta_4(\tau,z)^2}{\vartheta_4(\tau,0)^2}
\ee
\end{lemma}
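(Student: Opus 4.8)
The plan is to prove all four identities in Lemma~\ref{lemma:theta function identities} by reducing them to the uniqueness statement of Proposition~\ref{prop:autfms-phiCFs}: each right-hand side is a ratio of theta functions that I will show is a weak Jacobi form of weight~$0$ and index~$1$, hence must equal $C\phi_{0,1}+F_2\phi_{-2,1}$ for some constant $C$ and some $F_2\in M_2(\Gamma)$ over the appropriate group $\Gamma$; I then pin down $C$ and $F_2$ by evaluating a few Fourier coefficients. The first identity, (\ref{equation:theta1 identity}), is immediate---it is just the definition (\ref{eqn:autfms:phi-21}) of $\phi_{-2,1}$ restated---so no work is required there.

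**Setting up the ansatz.** For (\ref{equation:theta2 identity})--(\ref{equation:theta4 identity}) I would first argue that each function $\vartheta_i(\tau,z)^2/\vartheta_i(\tau,0)^2$, for $i=2,3,4$, is a weak Jacobi form of weight~$0$ and index~$1$ for a suitable congruence subgroup: the elliptic transformation (\ref{eqn:autfms-jacxfmell}) in $z$ follows directly from the quasi-periodicity of $\vartheta_i$ read off from (\ref{equation:theta1}), the weight-$0$ index-$1$ modular behavior (\ref{eqn:autfms-jacxfmmod}) follows from the classical $\SL_2(\ZZ)$ transformation laws of the theta functions (the $\vartheta_2,\vartheta_3,\vartheta_4$ get permuted among themselves, so squaring and normalizing by the $z=0$ value restores invariance up to level), and the holomorphy and the weak growth condition on the $q$-expansion are visible from the product formulas (\ref{eqn:autfms-theta1prod}). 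Once this is established, Proposition~\ref{prop:autfms-phiCFs} (the $m=1$ case) guarantees an expansion of the form $\frac{1}{12}\phi_{0,1}+F_2\,\phi_{-2,1}$, where I have normalized the constant by the known value $\phi_{0,1}(\tau,0)=12$ together with $\phi_{-2,1}(\tau,0)=0$: setting $z=0$ in each identity forces $C=\tfrac{1}{12}$ immediately, since each left side equals $1$ at $z=0$.

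**Determining $F_2$.** It then remains to identify the weight-$2$ coefficient $F_2$ in each case. The claim of the lemma is that $F_2$ equals $2\Lambda_2(\tau)$, $-\Lambda_2(\tau/2+1/2)$, and $-\Lambda_2(\tau/2)$ respectively, where $\Lambda_2$ is as in (\ref{eqn:autfms-LambdaN}). I would extract $F_2$ by expanding both sides to second order in $z$ (equivalently, comparing the first nontrivial $y$-Fourier coefficient), using the known $q$-expansions of $\vartheta_i$, $\eta$, and $E_2$; since $\phi_{0,1}$ and $\phi_{-2,1}$ have explicit $z$-expansions and $\phi_{-2,1}(\tau,z)=-z^2+O(z^4)$ up to normalization, the coefficient $F_2$ is read off as a single $q$-series, which I then match against the definition $\Lambda_2(\tau)=\tfrac{1}{12}(2E_2(2\tau)-E_2(\tau))$ and its arguments $\tau/2$, $\tau/2+1/2$. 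The factor-of-$2$ and half-integer-argument shifts are precisely what track the level structure coming from $\vartheta_2$ versus $\vartheta_3,\vartheta_4$.

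**Main obstacle.** The genuinely delicate step is bookkeeping the half-integral arguments $\tau/2$ and $\tau/2+1/2$ and the associated level: $\Lambda_2(\tau/2)$ and $\Lambda_2(\tau/2+1/2)$ live on $\Gamma_0(4)$ rather than $\Gamma_0(2)$, matching the fact that $\vartheta_3,\vartheta_4$ involve half-integer powers of $q$. I expect the cleanest route is to verify the identities as formal $q,y$-power series---expanding everything via the product formulas (\ref{eqn:autfms-theta1prod}) and the $E_2$ expansion (\ref{equation:eisenstein q-series})---rather than wrestling with the transformation laws directly; the logarithmic-derivative form of $\Lambda_N$ in (\ref{eqn:autfms-LambdaN}) makes the $\vartheta_i'/\vartheta_i$ comparison natural, since differentiating $\log\vartheta_i$ in $z$ at $z=0$ produces exactly the Eisenstein-type series that reassemble into $\Lambda_2$ at the shifted arguments. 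I would double-check the normalizations against the stated relation $\Lambda_4(\tau)=4\Lambda_2(2\tau)+2\Lambda_2(\tau)$ and $\Lambda_4(\tau+1/2)=8\Lambda_2(2\tau)-2\Lambda_2(\tau)$ as a consistency test on the final answer.
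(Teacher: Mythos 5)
Your proposal is correct and takes essentially the same route as the paper: the paper likewise shows each quotient $\vartheta_i(\tau,z)^2/\vartheta_i(\tau,0)^2$ is a weight-$0$, index-$1$ Jacobi form for $\Gamma(2)$ (checking the generators $T^2$ and $ST^2S$ via Jacobi's imaginary transformations) and then invokes Proposition \ref{prop:autfms-phiCFs} together with $\dim M_0(\Gamma(2))+\dim M_2(\Gamma(2))=1+2=3$ to reduce all three identities to agreement of Fourier coefficients up to $O(q)$, which is exactly your ansatz-plus-finite-check strategy. One small correction to your level bookkeeping: $\Lambda_2(\tau/2)$ and $\Lambda_2(\tau/2+1/2)$ are \emph{not} modular forms for $\Gamma_0(4)$ (their $q^{1/2}$-expansions show they are not invariant under $\tau\mapsto\tau+1$); they lie in $M_2(\Gamma(2))$, and it is this group you must use throughout so that your matching of the extracted $F_2$ against these candidates takes place in a single finite-dimensional space and is therefore a genuinely finite verification.
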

Note that the first identity of Lemma \ref{lemma:theta function identities} is just the definition of $\phi_{-2,1}$ (cf. (\ref{eqn:autfms:phi-21})).

\begin{proof}
Let $\Gamma(N)$ denote the {\em principal congruence group of level $N$}, being the kernel of the natural map $\SL_2(\ZZ)\to \SL_2(\ZZ/N\ZZ)$.
First we will show that 
\begin{gather}\label{eqn:specfun-jactht2quot}
\frac{\vartheta_i(\tau,z)^2}{\vartheta_i(\tau,0)^2}
\end{gather} 
is a Jacobi form for $\Gamma(2)$ of weight 0 and index 1, for $i\in \{2,3,4\}$. The transformations
\be
\frac{\vartheta_2(\tau,z+1)^2}{\vartheta_2(\tau,0)^2}=\frac{\vartheta_2(\tau,z)^2}{\vartheta_2(\tau,0)^2}\quad\text{and}\quad\frac{\vartheta_2(\tau,z+\tau)^2}{\vartheta_2(\tau,0)^2}=q^{-1}y^{-2}\frac{\vartheta_2(\tau,z)^2}{\vartheta_2(\tau,0)^2}
\ee
can be seen by explicit computation using (\ref{equation:theta1}). Thus (\ref{eqn:specfun-jactht2quot}) transforms properly under $\ZZ^2$ in the case that $i=2$. If $S$ and $T$ are the standard generators for the modular group, then $\Gamma(2)$ is generated by $T^2:\tau\mapsto\tau+2$ and $ST^2S:\tau\mapsto\frac{-\tau}{2\tau-1}$ (see \S 6 of \cite{Frasch}), so the required transformations under $\Gamma(2)$ are
\be
\frac{\vartheta_2(\tau+2,z)^2}{\vartheta_2(\tau+2,0)^2}=\frac{\vartheta_2(\tau,z)^2}{\vartheta_2(\tau,0)^2}\quad\text{and}\quad\frac{\vartheta_2(\frac{-\tau}{2\tau-1},\frac{z}{2\tau-1})^2}{\vartheta_2(\frac{-\tau}{2\tau-1},0)^2}=e^{\frac{4\pi \ii z^2}{2\tau-1}}\frac{\vartheta_2(\tau,z)^2}{\vartheta_2(\tau,0)^2}.
\ee
The first can be seen explicitly from (\ref{equation:theta1}), and the second follows from acting successively with $T$ and $S$, using Jacobi's imaginary transformations
\be
\vartheta_2\left(-\frac{1}{\tau},\frac{z}{\tau}\right)=\sqrt{-\ii\tau}\,e^{\pi \ii z^2/\tau}\vartheta_4(\tau,z)\quad\text{and}\quad\vartheta_4\left(-\frac{1}{\tau},\frac{z}{\tau}\right)=\sqrt{-\ii\tau}\,e^{\pi \ii z^2/\tau}\vartheta_2(\tau,z).
\ee
The required form for the Fourier expansion can also be seen from (\ref{equation:theta1}). Thus (\ref{eqn:specfun-jactht2quot}) is a Jacobi form for $\Gamma(2)$ of weight 0 and index 1 for $i=2$.

Similar arguments handle the cases that $i=3$ and $i=4$. For $i=3$ we use another of Jacobi's imaginary transformations, 
\be
\vartheta_3\left(-\frac{1}{\tau},\frac{z}{\tau}\right)=\sqrt{-\ii\tau}\,e^{\pi \ii z^2/\tau}\vartheta_3(\tau,z).
\ee

As recorded in \cite{CP}, the group $\Gamma(2)$ is a genus $0$ congruence subgroup of $\SL_2(\ZZ)$ with 3 inequivalent cusps. By Theorem 2.23 of \cite{Shimura}, the dimension of the space of weight 2 modular forms on $\Gamma(2)$ is 2, and the dimension of the space of weight 0 modular forms on $\Gamma(2)$ is 1 (i.e. spanned by a constant function). Proposition 6.1 of \cite{Prop6.1} shows that the dimension of the space of weight 0, index 1 Jacobi forms on $\Gamma(2)$ must therefore be 3. Now the proof of the required identities is reduced to checking the agreement of Fourier coefficients up to $O(q)$.
\end{proof}


\section{Derived Equivalences}\label{sec:de}

Let $X$ be a complex K3 surface; i.e., a compact connected complex manifold of dimension $2$ with $\Omega_X^2\simeq \mathcal{O}_X$ and $H^1(X,\mathcal{O}_X)=0$. (Good references for K3 surfaces include \cite{MR2030225} and \cite{MR785216}.) Then the intersection form $(\;.\;)$ equips the integral singular cohomology group $H^2(X,\ZZ)$ with the structure of an even self-dual lattice of signature $(3,19)$, so we have $H^2(X,\ZZ)\simeq E_8(-1)^{\oplus 2}\oplus U^{\oplus 3}$ according to (\ref{eqn:lat-II319}).

Write $\mlz=(\mlz,\lab\;,\;\rab)$ for the {\em Mukai lattice} of $X$, being the lattice obtained from 
\begin{gather}
H^*(X,\ZZ)=H^0(X,\ZZ)\oplus H^2(X,\ZZ)\oplus H^4(X,\ZZ)
\end{gather} 
by reversing the sign of the pairings between $H^0(X,\ZZ)$ and $H^4(X,\ZZ)$, so that
\begin{gather}
\lab \lambda,\mu\rab=(\lambda_2.\mu_2)-(\lambda_0.\mu_4)-(\lambda_4.\mu_0)
\end{gather}	
for $\lambda=\lambda_{0}+\lambda_2+\lambda_4\in\mlz$ with $\lambda_k\in H^k(X,\ZZ)$, \&c. Then $\mlz$ is self-dual and even with signature $(4,20)$.
\begin{gather}\label{eqn:de:mlz}
\mlz\simeq E_8(-1)^{\oplus 2}\oplus U^{\oplus 4}
\end{gather}

A {\em Hodge structure} of weight $2$ on a lattice $L$ is a direct sum decomposition 
\begin{gather}
L\otimes_\ZZ{\CC}=L^{0,2}\oplus L^{1,1}\oplus L^{2,0}
\end{gather}
of the complex vector space enveloping $L$ into complex subspaces $L^{p,q}<L\otimes_\ZZ{\CC}$ such that the $\RR$-linear complex conjugation $v\mapsto \bar{v}$ on $L\otimes_\ZZ{\CC}$ that fixes the subset $L\otimes_\ZZ{\RR}$ induces $\RR$-linear isomorphisms $L^{p,q}\simeq L^{q,p}$. 

If $X$ is a complex K3 surface then we naturally obtain a weight $2$ Hodge structure 
\begin{gather}\label{eqn:de:mlzhod}
\mlc=\wt{H}^{2,0}(X)\oplus \wt{H}^{1,1}(X)\oplus \wt{H}^{0,2}(X)
\end{gather}
on the Mukai lattice of $X$, by setting 
\begin{gather}
\begin{split}\label{eqn:de:mlzhoddef}
\wt{H}^{2,0}(X)&:=H^{2,0}(X),\\
\wt{H}^{1,1}(X)&:=H^{0,0}(X)\oplus H^{1,1}(X)\oplus H^{2,2}(X),\\
\wt{H}^{0,2}(X)&:=H^{0,2}(X),
\end{split}
\end{gather}
where the $H^{p,q}(X)=H^{p,q}(X,\CC)$ are the Dolbeaut cohomology groups of $X$. Say that an automorphism $g$ of the lattice $\mlz$ is a {\em symplectic Hodge isometry} of $\mlz$ if the $\CC$-linear extension of $g$ to $\mlz\otimes_{\ZZ}\CC$ fixes $\wt{H}^{2,0}(X)$ (and hence also $\wt{H}^{0,2}(X)$) pointwise. Note that $\wt{H}^{2,0}(X)$ and $\wt{H}^{0,2}(X)$ are isotropic with respect to the bilinear form on $\mlc$ induced from $\mlz$. The intersection 
\begin{gather}\label{eqn:de:PX}
	P_X:=(\wt{H}^{2,0}(X)\oplus \wt{H}^{0,2}(X))\cap\mlr
\end{gather}
is a positive-definite $2$-dimensional subspace of $\mlr$.

Following \cite{2013arXiv1309.6528H} we write $\Aut_s(\mlz)$ for the group of symplectic Hodge isometries of the Mukai lattice $\mlz$ of a complex K3 surface $X$. Note that any symplectic automorphism of $X$ of finite order naturally induces a symplectic Hodge isometry of $\mlz$, via the induced action of $g$ on $H^*(X,\ZZ)$ (cf. \S1.2 of \cite{2013arXiv1309.6528H}), but for general $X$ not all symplectic Hodge isometries arise in this way (cf. \S1.4 of \cite{2013arXiv1309.6528H}).

Assume now that $X$ is projective, admitting an embedding in some complex projective space $\PP^n$. Write $\Db(X)$ for the bounded derived category of coherent sheaves on $X$ and let $\Aut(\Db(X))$ denote the group of isomorphism classes of exact $\CC$-linear autoequivalences of $\Db(X)$. (See \cite{MR2511017,MR2244106} for detailed expositions of this theory.) The induced action of an exact autoequivalence of $\Db(X)$ on $\mlz$---cf. the discussion in \S1.2 in \cite{MR2376815}---defines a morphism of groups from $\Aut(\Db(X))$ to the automorphism group (i.e. orthogonal group) of $\mlz$, and we write $\Aut_s(\Db(X))$ for the subgroup of {\em symplectic} autoequivalences, being those elements of $\Aut(\Db(X))$ that map to symplectic Hodge isometries of $\mlz$. 
\begin{gather}\label{eqn:de:autmor}
	\Aut_s(\Db(X))\to\Aut_s(\mlz)
\end{gather}

Let $\Stab(X)$ denote the space of stability conditions on $\Db(X)$. (See \cite{2011arXiv1111.1745H} for a nice introduction to stability conditions.) Write $\Stabo(X)$ for the distinguished connected component of $\Stab(X)$ introduced and first analyzed by Bridgeland in \cite{MR2376815}. Given $\sigma\in \Stabo(X)$ say that an autoequivalence in $\Aut_s(\Db(X))$ is {\em $\sigma$-positive} if its induced action on $\Stab(X)$ fixes $\sigma$, and write $\Aut_s(\Db(X),\sigma)$ for the group of all (isomorphism classes of) $\sigma$-positive exact $\CC$-linear autoequivalences of $\Db(X)$. 

Set $\mlzoo:=\wt{H}^{1,1}(X)\cap \mlz$ (cf. (\ref{eqn:de:mlzhoddef})). To each $\sigma\in \Stab(X)$ is attached a {\em central charge} $Z$, which may be regarded as a morphism of groups $\mlzoo\to\CC$, or equivalently, via Poincar\'e duality, as an element of $\mlcoo$. According to \S1.3 of \cite{2013arXiv1309.6528H}, the real subspace of $\mlroo$ spanned by the real and imaginary parts of $Z$,
\begin{gather}\label{eqn:de:PZ}
	P_Z:=\RR\Re(Z)\oplus \RR\Im(Z)<\mlroo,
\end{gather}
is positive with respect to the induced bilinear form from $\mlz$, when $Z$ is the central charge of a stability condition in $\Stabo(X)$. For such a $Z\in\mlcoo$ define $\Aut_s(\mlz,Z)$ to be the subgroup of symplectic Hodge isometries of $\mlz$ whose $\RR$-linear extensions to $\mlr$ fix the subspace $P_Z$ pointwise; such an isometry is called {\em $P_Z$-positive}. 

Recall the natural map (\ref{eqn:de:autmor}). Huybrechts has shown that this map induces an isomorphism between the group of $\sigma$-positive symplectic autoequivalences of $\Db(X)$ and the group of $P_Z$-positive symplectic Hodge isometries of $\mlz$ when $Z$ is the central charge of a stability condition $\sigma$ in $\Stabo(X)$.
\begin{proposition}[\cite{2013arXiv1309.6528H}]\label{prop:de:aetohi}
Let $X$ be a projective complex K3 surface, let $\sigma\in \Stabo(X)$ and let $Z$ be the central charge of $\sigma$. Then the natural map $\Aut_s(\Db(X))\to\Aut_s(\mlz)$ induces an isomorphism of groups
\begin{gather}\label{eqn:de:aetohi}
\Aut_s(\Db(X),\sigma)\xrightarrow{\sim}\Aut_s(\mlz,Z).
\end{gather}
\end{proposition}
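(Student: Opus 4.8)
The plan is to identify the asserted isomorphism as the stabilizer comparison furnished by Bridgeland's covering space description of $\Stabo(X)$, so that Proposition \ref{prop:de:aetohi} becomes a formal consequence of the equivariance of that covering together with the freeness and transitivity of the deck group on its fibers. First I would check that the natural map restricts as claimed: if $g$ fixes $\sigma$ then it fixes the central charge $Z$ of $\sigma$, so the induced Hodge isometry $\bar g$ fixes $Z\in\mlcoo$ and hence fixes $P_Z$ pointwise, giving $\bar g\in\Aut_s(\mlz,Z)$ and a homomorphism $\Aut_s(\Db(X),\sigma)\to\Aut_s(\mlz,Z)$. Conversely, a $P_Z$-positive symplectic Hodge isometry fixes $P_X$ pointwise (by the symplectic condition) and $P_Z$ pointwise, hence fixes the positive-definite $4$-plane $P_X\oplus P_Z$ pointwise; in particular every element of $\Aut_s(\mlz,Z)$ is orientation-preserving, and $\Aut_s(\mlz,Z)$ is exactly the stabilizer of $Z$ inside the group of orientation-preserving symplectic Hodge isometries.

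The key structural input, due to Bridgeland \cite{MR2376815}, is that the central charge map
\begin{gather}
\eta:\Stabo(X)\longrightarrow \mathcal{P}_0^+(X),\qquad \sigma\mapsto Z_\sigma,
\end{gather}
is a covering onto the period domain $\mathcal{P}_0^+(X)$ of those $Z\in\mlcoo$ whose real and imaginary parts span a positively oriented positive-definite $2$-plane avoiding the hyperplanes $\delta^\perp$ for $\delta$ a $(-2)$-class, and that this covering is equivariant for the action of the autoequivalences preserving $\Stabo(X)$ and the action of the orientation-preserving symplectic Hodge isometries on $\mathcal{P}_0^+(X)$. The deck transformations of $\eta$ are realized by the autoequivalences acting trivially on $\mlz$; since $\Stabo(X)$ is connected these act freely on the fibers, and I would further use that they act transitively on each fiber, i.e. that the covering is normal with this deck group. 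For the lifting I would invoke the Derived Global Torelli Theorem (see \cite{MR2511017,MR2244106}): every orientation-preserving symplectic Hodge isometry of $\mlz$ is induced by a symplectic autoequivalence, which may be taken to preserve $\Stabo(X)$.

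Granting these inputs, the proof becomes the standard covering-space argument. For injectivity, suppose $g\in\Aut_s(\Db(X),\sigma)$ has $\bar g=\mathrm{id}$; then $g$ acts trivially on $\mlz$, hence as a deck transformation of $\eta$, and since it fixes $\sigma$ the freeness forces $g$ to act trivially on all of $\Stabo(X)$. An autoequivalence fixing every stability condition while acting trivially on cohomology fixes every point-like object together with its phase and is therefore a standard autoequivalence pinned down to the identity, so $g=\mathrm{id}$. For surjectivity, take $\phi\in\Aut_s(\mlz,Z)$; by the first paragraph $\phi$ is orientation-preserving, so it lifts to a symplectic autoequivalence $g_0$ preserving $\Stabo(X)$. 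Then $\eta(g_0\sigma)=\phi(Z)=Z=\eta(\sigma)$, so $g_0\sigma$ and $\sigma$ lie in one fiber of $\eta$; by transitivity of the deck group there is an autoequivalence $h$ acting trivially on $\mlz$ with $hg_0\sigma=\sigma$, and $g:=hg_0$ then lies in $\Aut_s(\Db(X),\sigma)$ with $\bar g=\phi$.

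I expect the main obstacle to lie entirely in the structural inputs of the second paragraph rather than in the formal argument of the third. Concretely, the delicate points are the realization statement — that every orientation-preserving symplectic Hodge isometry is induced by an autoequivalence preserving the distinguished component $\Stabo(X)$ — and the transitivity (normality) of the deck group action on the fibers of $\eta$; both rest on the full strength of Bridgeland's analysis in \cite{MR2376815} together with the Derived Torelli theorem, and are exactly the facts assembled by Huybrechts in \cite{2013arXiv1309.6528H}. The rigidity used in injectivity, that the identity is the only autoequivalence fixing every point of $\Stabo(X)$ and acting trivially on cohomology, is a more routine consequence of the classification of Fourier--Mukai kernels supported on the diagonal.
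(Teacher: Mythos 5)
The paper never proves this proposition: it is imported with a citation from Huybrechts \cite{2013arXiv1309.6528H}, so your attempt can only be measured against what the cited results actually supply. With that said, your first paragraph and your injectivity argument are sound. An element of $\Aut_s(\mlz,Z)$ fixes the positive four-plane $P_X\oplus P_Z$ pointwise and is therefore orientation-preserving; and injectivity is immediate from Bridgeland's theorem that the autoequivalences preserving $\Stabo(X)$ and acting trivially on $\mlz$ constitute the deck group of $\eta$ and act freely on $\Stabo(X)$ (your extra discussion of point-like objects and phases re-derives part of that freeness statement and is redundant rather than wrong).

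The genuine gap is in surjectivity, and it sits exactly at the two places you flag as ``delicate'' but then treat as established facts. Bridgeland's theorem \cite{MR2376815} asserts that $\eta$ is a covering map and that the cohomologically trivial, component-preserving autoequivalences equal its deck group and act freely; it does \emph{not} assert that this group acts transitively on fibers. Similarly, the derived global Torelli theorem (whose orientation-preserving refinement is due to Huybrechts--Macr\`i--Stellari \cite{MR2553878}, not the textbook references you cite) produces \emph{some} lift $g_0$ of $\phi$, with no control over whether $g_0$ preserves the distinguished component $\Stabo(X)$. The two statements you need --- normality of the covering (which would follow from simple connectivity of $\Stabo(X)$) and preservation of $\Stabo(X)$ by arbitrary autoequivalences --- are precisely Bridgeland's conjecture, stated alongside his theorem in \cite{MR2376815} and still open in general. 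So your third paragraph is a correct formal argument \emph{conditional on an open conjecture}, and your closing appeal to \cite{2013arXiv1309.6528H} for these inputs is circular: supplying them (in the special case of isometries fixing the four-plane $\Pi=P_X\oplus P_Z$ pointwise, where one can exploit, e.g., that $\Aut_s(\mlz,Z)$ is finite and that $\Pi^\perp\cap\mlz$ contains no $(-2)$-classes) is the actual content of the proposition you are asked to prove, and it cannot be obtained as a formal consequence of the two theorems you invoke.
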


We have mentioned that $P_X$ and $P_Z$ are positive-definite $2$-dimensional subspaces of $\mlr$. They are orthogonal (compare (\ref{eqn:de:PX}) with (\ref{eqn:de:PZ})), and $\mlz$ has signature $(4,20)$ (cf. (\ref{eqn:de:mlz})), so 
\begin{gather}\label{eqn:de-Pi}
\Pi:=P_X\oplus P_Z
\end{gather}
is a maximal positive-definite subspace of $\mlr$. As explained in \cite{2013arXiv1309.6528H}, the intersection $\Pi^{\perp}\cap \mlz$ contains no vectors $\delta$ with $\lab\delta,\delta\rab=-2$. For on the one hand, if $\lab\delta,\delta\rab=-2$ and $\delta\in P_X^\perp$, then $\delta\in\mlzoo$. On the other hand, it is proven in Proposition 13.2 of \cite{MR2376815} that if $\delta\in \mlzoo$ and $\lab\delta,\delta\rab=-2$, then $\lab Z,\delta\rab\neq 0$.

Note that any positive-definite $4$-dimensional subspace of $\mlr$, and in particular $\Pi=P_X\oplus P_Z$, is naturally oriented. For if $\varsigma$ is a non-zero element of $H^{2,0}(X)$ then the $4$-tuple $(\Re(\varsigma),\Im(\varsigma),\Re(Z),\Im(Z))$ defines an oriented basis, and the resulting orientation depends neither on $\varsigma$ nor $Z$ (cf. \S4.5 of \cite{MR2553878}). 

Given $X$ and $\sigma\in\Stabo(X)$ as above, define
\begin{gather}\label{eqn:de-GPi}
	G_\Pi:=\Aut_s(\Db(X),\sigma),
\end{gather}
and use the natural isomorphism (\ref{eqn:de:aetohi}) to identify $G_\Pi$ with $\Aut_s(\mlz,Z)$. Also define $\Gamma_\Pi$ to be the sublattice of $\mlz$ composed of vectors orthogonal to the sublattice of $\mlz$ that is fixed by $G_\Pi$, so that
\begin{gather}\label{eqn:de-GammaPi}
	\Gamma_\Pi:=\left(\mlz^{G_\Pi}\right)^\perp\cap\mlz.
\end{gather} 
Then $\Gamma_\Pi$ is an even, negative-definite lattice of rank at most $20$, naturally admitting a faithful action by $G_\Pi$. Moreover, according to the argument in \S\S B.1-2 of \cite{GHV} (cf. also \S2.2 of \cite{2013arXiv1309.6528H}), the lattice $\Gamma_\Pi$ admits a primitive embedding 
\begin{gather}\label{eqn:de-iota}
\iota:\Gamma_\Pi\to \LL(-1)
\end{gather}
in the negative-definite Leech lattice (cf. \S\ref{sec:lat}), and the action of $G_\Pi$ on $\Gamma_\Pi$ extends naturally to $\LL(-1)$, in such a way that all vectors in $\iota(\Gamma_\Pi)^\perp\cap\LL(-1)$ are fixed by $G_\Pi$. Thus the primitive embedding $\iota$ of (\ref{eqn:de-iota}) determines an embedding of groups, 
\begin{gather}\label{eqn:de-iotastar}
	\iota_*:G_\Pi\to \Aut(\LL), 
\end{gather}
which we may use to identify $G_\Pi$ with a subgroup of the Conway group, $\Co_0=\Aut(\LL)$ (cf. (\ref{eqn:lat-Co0})). The sublattice of $\LL$ fixed by this copy of $G_\Pi$ has rank at least $4$. 

Call a primitive embedding as in (\ref{eqn:de-iota}) a {\em Leech marking} of the data $(X,\sigma)$. We may summarize the previous paragraph by saying that the group $G_\Pi=\Aut_s(\Db(X),\sigma)$ is isomorphic to a subgroup of $\Co_0$ that fixes a rank $4$ sublattice of $\LL$, and the choice of Leech marking $\iota$ determines this subgroup completely. The main result of \cite{2013arXiv1309.6528H} states that the converse is also true.
\begin{theorem}[\cite{2013arXiv1309.6528H}]\label{thm:de:deco}
For $X$ a projective complex K3 surface and $\sigma\in \Stabo(X)$ the group $G_\Pi=\Aut_s(\Db(X),\sigma)$ is isomorphic to a subgroup of $\Co_0$ whose action on the Leech lattice fixes a sublattice of rank at least $4$. Conversely, if $G_*$ is a subgroup of $\Co_0$ that fixes a rank $4$ sublattice of the Leech lattice then there exists a projective complex K3 surface $X$, a stability condition $\sigma\in\Stabo(X)$, and a Leech marking $\iota$ for $(X,\sigma)$ such that $G_*$ is a subgroup of $\iota_*G_\Pi$.
\end{theorem}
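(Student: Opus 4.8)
The forward implication is essentially assembled from the discussion preceding the statement: from $(X,\sigma)$ one has already produced the even negative-definite lattice $\Gamma_\Pi=(\mlz^{G_\Pi})^\perp\cap\mlz$ of rank at most $20$, a primitive embedding $\iota:\Gamma_\Pi\to\LL(-1)$ via the argument of \S\S B.1--2 of \cite{GHV}, and an extension of the $G_\Pi$-action to $\LL(-1)$ fixing $\iota(\Gamma_\Pi)^\perp\cap\LL(-1)$ pointwise, whence the embedding $\iota_*:G_\Pi\to\Co_0$ with image fixing a rank-$\geq 4$ sublattice of $\LL$. So I would spend essentially all the effort on the converse. Suppose then that $G_*<\Co_0=\Aut(\LL)$ fixes a rank $4$ sublattice of $\LL$. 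First I would pass to the negative-definite Leech lattice and set $\Gamma:=(\LL(-1)^{G_*})^\perp\cap\LL(-1)$, an even negative-definite lattice of rank at most $20$ on which $G_*$ acts faithfully. Because $\LL$ has no vectors of square-length $2$, the lattice $\Gamma$ contains no $(-2)$-vectors; and because $G_*$ fixes $\LL(-1)^{G_*}$ pointwise, the standard gluing principle (an isometry of a primitive sublattice extends to the ambient even self-dual lattice exactly when it acts compatibly on the discriminant glue) forces $G_*$ to act trivially on the discriminant group $\Gamma^*/\Gamma$.

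Next I would transplant $\Gamma$ into the Mukai lattice. Using Nikulin's existence criteria for primitive embeddings of an even lattice into an indefinite even self-dual lattice of sufficient rank, I would fix a primitive embedding $\Gamma\to\mlz$ whose orthogonal complement $T:=\Gamma^\perp\cap\mlz$ has signature $(4,20-\rk\Gamma)$. The $G_*$-action then extends to $\mlz$ by acting as given on $\Gamma$ and trivially on $T$: since $G_*$ is trivial on $\Gamma^*/\Gamma$ and (visibly) trivial on $T^*/T$, the combined action preserves the glue $\mlz/(\Gamma\oplus T)$, hence preserves $\mlz$ itself. This realizes $G_*$ as a subgroup of $\Aut(\mlz)$ fixing $T$, and in particular fixing the maximal positive-definite subspace $\Pi\subset T\otimes_\ZZ\RR$, pointwise.

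Finally I would install a Hodge structure and a stability condition. Choosing a generic oriented positive-definite $2$-plane $P_X\subset\Pi$ singles out an isotropic period line inside the complexification of $P_X$, hence a weight-$2$ Hodge structure of K3 type on $\mlz$; the orthogonal complement $P_Z=P_X^\perp\cap\Pi$ supplies a central charge $Z\in\mlcoo$. For a generic such choice one arranges that $\Pi^\perp\cap\mlz=\Gamma$, which has no $(-2)$-vectors, and that $\mlzoo$ carries a polarization class, so that the period is realized by a \emph{projective} K3 surface $X$ through the surjectivity of the period map (cf. \cite{MR2030225}), while $\sigma$ with central charge $Z$ lies in $\Stabo(X)$ by Bridgeland's description, the relevant absence of roots being exactly Proposition 13.2 of \cite{MR2376815}. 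By construction $G_*$ fixes both $P_X$ and $P_Z$ pointwise, so $G_*\subseteq\Aut_s(\mlz,Z)$, which is identified with $G_\Pi$ by Proposition \ref{prop:de:aetohi}; taking the Leech marking $\iota$ to recover the original inclusion $\Gamma\hookrightarrow\LL(-1)$ then gives $G_*\subseteq\iota_*G_\Pi$, as required.

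The hard part will be making these choices cohere: a \emph{single} primitive embedding $\Gamma\to\mlz$ and a \emph{single} point of the associated period domain must simultaneously exist as an embedding with complement of signature $(4,\cdot)$ (governed by Nikulin's discriminant-form conditions), admit the trivial extension of the $G_*$-action across the glue, be generic enough that $\Pi^\perp\cap\mlz$ gains no $(-2)$-vectors beyond those of $\Gamma$ (which has none), and carry a polarization forcing projectivity of $X$. Balancing the purely lattice-theoretic existence constraints against the Hodge-theoretic realizability and the requirement that $\sigma$ genuinely land in $\Stabo(X)$ is where the real work lies, and it is the place where one most needs the special feature of $\LL$ that it is rootless.
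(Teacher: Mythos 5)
First, a point of context: the paper does not actually prove this theorem---it is imported wholesale from Huybrechts \cite{2013arXiv1309.6528H}, with the forward implication assembled (exactly as you say) from the discussion preceding the statement, and the converse cited as the main result of \cite{2013arXiv1309.6528H}. So the comparison must be with Huybrechts' own argument, and your sketch does follow its broad strategy: the coinvariant lattice $\Gamma=(\LL(-1)^{G_*})^\perp\cap\LL(-1)$, rootlessness inherited from $\LL$, triviality of the $G_*$-action on $\Gamma^*/\Gamma$ via the unimodular glue, a Nikulin primitive embedding $\Gamma\to\mlz$ (the requisite inequality $\rk\Gamma+\ell(A_\Gamma)\leq 24$ does hold, since $A_\Gamma\simeq A_{\LL(-1)^{G_*}}$ needs at most $\rk\LL^{G_*}=24-\rk\Gamma$ generators), extension of the action by the identity across the glue, and the identification $G_\Pi\simeq\Aut_s(\mlz,Z)$ of Proposition \ref{prop:de:aetohi}. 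Your treatment of the stability condition is also essentially right, and can even be simplified: one does not need $\Pi^\perp\cap\mlz=\Gamma$, because for \emph{any} $P_X\subset T\otimes_\ZZ\RR$ and a generic positive $2$-plane $P_Z\subset P_X^\perp\cap(T\otimes_\ZZ\RR)$, a $(-2)$-class of $\mlzoo$ orthogonal to $P_Z$ is forced to be orthogonal to all of $T\otimes_\ZZ\RR$, hence lies in the rootless lattice $\Gamma$. This matters below, because $P_X$ cannot be taken generic.

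The genuine gap is the sentence ``the period is realized by a projective K3 surface $X$ through the surjectivity of the period map.'' Surjectivity of the period map concerns Hodge structures on $H^2(X,\ZZ)\simeq\II_{3,19}$, whereas you have built a Hodge structure on the Mukai lattice $\II_{4,20}$. To descend, you must first exhibit a hyperbolic plane $U$ inside the algebraic part $\mlzoo$ to play the role of $H^0\oplus H^4$ (any such copy splits off, being unimodular), and then a class of positive square in $U^\perp\cap\mlzoo$ to make $X$ projective; this is in effect where the Mukai--Orlov derived Torelli theorem enters the argument of \cite{2013arXiv1309.6528H}, and a ``polarization class'' alone does not suffice. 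Moreover, genericity works \emph{against} you at exactly this step: for a generic period in the period domain of $T$ one gets $\mlzoo=\Gamma$, which is negative definite and so contains neither a copy of $U$ nor any positive class, hence is not the Mukai--Hodge structure of any projective K3 surface. Worse, in the extremal case $\rk\Gamma=20$ (which occurs---for instance for the classes in Table \ref{table:4space} with $\rk\LL^{g}=4$---and is the case of principal interest) the complement $T$ is positive definite of rank $4$, so it contains no isotropic vector at all: the required $U$ can live neither inside $T$ nor orthogonal to $\Gamma$, and must be manufactured from glue vectors of $\Gamma\oplus T\subset\mlz$ by a deliberately non-generic choice of period orthogonal to suitable integral classes. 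Arranging this coherently---non-generic $P_X$ against chosen glue classes, generic $P_Z$ for the rootlessness argument---is the real content of Huybrechts' proof of the converse, and it is precisely the part your sketch defers to ``balancing constraints.''
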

Recall from \S\ref{sec:lat} that the center of $\Co_0$ is the group of order $2$ generated by $-\Id$, and $\Co_1$ denotes the sporadic simple quotient group $\Co_1=\Co_0/\{\pm\Id\}$. Observe that if $G_*$ is a subgroup of $\Co_0$ that has a fixed point in its action on $\Lambda$ then the natural map $\Co_0\to \Co_1$ induces an isomorphism between $G_*$ and its image in $\Co_1$. Thus one may replace $\Co_0$ with $\Co_1$ in the statement of Theorem \ref{thm:de:deco}.

\section{Vertex Algebra}\label{sec:va}

In this section we briefly recall super vertex operator algebras and their canonically-twisted modules. We refer to the texts \cite{MR2082709,MR1651389,MR2023933} for more background on vertex algebra.

A {\em super vector space} is a simply a vector space with a $\ZZ/2$-grading, $V=V_{\bar{0}}\oplus V_{\bar{1}}$. A linear operator $T:V\to V$ is called {\em even} if $T(V_{\bar{j}})\subset V_{\bar{j}}$, and {\em odd} if $T(V_{\bar{j}})\subset V_{\overline{j+1}}$. 

For $V$ a super vector space and $z$ a formal variable write $V((z)):=V[[z]][z^{-1}]$ for the space of Laurent series in $z$ with coefficients in $V$. Taking $z$ to be even we naturally obtain a super structure $V_{\bar 0}((z))\oplus V_{\bar 1}((z))$ on $V((z))$. Observe that the rational function $f(z,w)=(z-w)^{-1}$ naturally defines elements of $\CC((z))((w))$ and $\CC((w))((z))$ via formal power series expansions, for we have $f(z,w)=\sum_{n\geq 0} z^{-n-1}w^n$ in $\CC((z))((w))$ and $f(z,w)=-\sum_{n\geq 0}w^{-n-1}z^n$ in $\CC((w))((z))$. These rules extend naturally so as to define {\em formal expansion} maps
\begin{gather}
	\begin{split}\label{eqn:va-fmlxpn}
	V[[z,w]][z^{-1},w&^{-1},(z-w)^{-1}]\\
	\swarrow\quad\quad\quad\quad\downarrow&\quad\quad\quad\searrow\\
	V((z))((w))\quad\quad V((w))&((z))\quad\quad V((w))((z-w)).
	\end{split}
\end{gather}

A \emph{super vertex algebra} is a super vector space $V=V_{\bar{0}}\oplus V_{\bar{1}}$ equipped with a \emph{vacuum vector} $\mathbf{1} \in V_{\bar{0}}$, an even linear operator $T:V\to V$, and a linear map
\begin{align}
\begin{split}\label{eqn:va:funds-vopcorr}
V & \to\End(V)[[z^{\pm1}]] \\
a & \mapsto Y(a,z)=\sum_{n\in\ZZ} a_{(n)} z^{-n-1}
\end{split}
\end{align}
which associates to each $a\in V$ a \emph{vertex operator} $Y(a,z)$. This data should satisfy the following axioms for any $a,b, c\in V$.
\begin{enumerate}
\item $Y(a,z)b\in V((z))$ and if $a\in V_{\bar{0}}$ (resp. $a\in V_{\bar{1}}$) then $a_{(n)}$ is an even (resp. odd) operator for all $n$;
\item $Y(\mathbf{1},z)=\Id_V$ and $Y(a,z)\mathbf{1}\in a+zV[[z]]$;
\item $[T,Y(a,z)]=\partial_z Y(a,z)$ and $T\mathbf{1}=0$;
\item if $a\in V_{p(a)}$ and $b\in V_{p(b)}$ are $\ZZ/2$ homogenous, there exists an element
$$f\in V[[z,w]][z^{-1},w^{-1},(z-w)^{-1}]$$
depending on $a$, $b$, and $c$, such that
$$Y(a,z)Y(b,w)c,\quad (-1)^{p(a)p(b)}Y(b,w)Y(a,z)c,\quad\text{and}\quad Y(Y(a,z-w)b,w)c$$
are the formal expansions of $f$ in $V((z))((w))$, $V((w))((z))$, and $V((w))((z-w))$ respectively (cf. (\ref{eqn:va-fmlxpn})).
\end{enumerate}

For $V=V_{\bar{0}}\oplus V_{\bar{1}}$ a super vertex operator algebra let $\theta:V\to V$ denote the {\em parity involution}, acting as $(-1)^j$ on $V_{\bar{j}}$. A \emph{canonically-twisted module} for $V$ is a super vector space $M=M_{\bar{0}}\oplus M_{\bar{1}}$ equipped with a linear map
\begin{align}
\begin{split}\label{eqn:va-Ytw}
V & \to\End(M)[[z^{\pm{1/2}}]] \\
a & \mapsto Y_\tw(a,z^{1/2})=\sum_{n\in\frac{1}{2}\ZZ}a_{(n),\tw}z^{-n-1},
\end{split}
\end{align}
associating to each $a\in V$ a \emph{canonically-twisted vertex operator} $Y_\tw(a,z^{1/2})$, which satisfies the following axioms for any $a,b\in V, u\in M$:
\begin{enumerate}
\item $Y_\tw(a,z^{1/2})u\in M((z^{1/2}))$ and if $a\in V_{\bar{0}}$ (resp. $a\in V_{\bar{1}}$) then $a_{(n),\tw}$ is an even (resp. odd) operator for all $n$;
\item $Y_\tw(\mathbf{1},z^{1/2})=\Id_M$;
\item if $a\in V_{p(a)}$ and $b\in V_{p(b)}$, there exists an element
$$f\in M[[z^{1/2}, w^{1/2}]][z^{-{1/2}},w^{-{1/2}},(z-w)^{-1}]$$
depending on $a$, $b$, and $u$, such that
$$Y_\tw(a,z^{1/2})Y_\tw(b,w^{1/2})u, \quad (-1)^{p(a)p(b)}Y_\tw(b,w^{1/2})Y_\tw(a,z^{1/2})u,$$
$$\text{and}\qquad Y_\tw(Y(a,z-w)b,w^{1/2})u$$
are the expansions of $f$ in the spaces $M((z^{1/2}))((w^{1/2}))$, $M((w^{1/2}))((z^{1/2}))$, and $M((w^{1/2}))((z-w))$, respectively; and
\item if $\theta(a)=(-1)^ma$, then $a_{(n),\tw}=0$ for $n\notin\ZZ+\frac{m}{2}$.
\end{enumerate}
More details on twisted vertex operators can be found, e.g. in \cite{MR2074176,MR1372724}.

The \emph{Virasoro} algebra $\vir$ is the Lie algebra spanned by $L(m)$, for $m\in\ZZ$, and a central element ${\bf c}$, with Lie bracket
\begin{gather}\label{eqn:LmLn}
[L(m),L(n)]=(m-n)L(m+n)+\frac{m^3-m}{12}\delta_{m+n,0}{\bf c}.
\end{gather}
A representation $\vir\to \End(V)$ of the Virasoro algebra is said to have {\em central charge} $c$ if the central element ${\bf c}$ acts as multiplication by $c$ on $V$.

A {\em super vertex {operator} algebra} is a super vertex algebra $V=V_{\bar{0}}\oplus V_{\bar{1}}$ containing a \emph{Virasoro element} $\omega\in V_{\bar{0}}$ such that if $L(n):=\omega_{(n+1)}$ for $n\in \ZZ$ then
\begin{enumerate}\setcounter{enumi}{4}
\item $L({-1})=T$;
\item $[\Lm, \Ln]=(m-n)L(m+n)+\frac{m^3-m}{12}\delta_{m+n,0}c\Id_V$ for some $c\in \CC$;
\label{item:Vircond}
\item $\Lo$ is a diagonalizable operator on $V$, with rational eigenvalues bounded below, and finite-dimensional eigenspaces.
\end{enumerate}
According to item \ref{item:Vircond}, the components of $Y(\omega,z)$ generate a representation of the Virasoro algebra on $V$ with central charge $c$. 

In this work, all super vertex operator algebras will have rational central charges. For $V$ such a super vertex operator algebra let us write $V=\bigoplus_{n\in \QQ}V_n$ for the decomposition of $V$ into eigenspaces\footnote{Note that $V_n$ often denotes the $L(0)$-eigenspace with eigenvalue $n$, elsewhere in the literature, and in (2.5) of \cite{vacogm}, in particular.} for $L(0)-\frac{\bf c}{24}$.
\begin{gather}\label{eqn:va-Vn}
	V_n:=\left\{v\in V\mid \left(L(0)-\tfrac{\bf c}{24}\right)v=nv\right\}
\end{gather}
Similarly, if $V_\tw$ is a canonically-twisted module for $V$, we also write $L(n)$ for $\omega_{(n+1),\tw}$ (cf. (\ref{eqn:va-Ytw})), a linear operator on $V_\tw$, and we write $(V_\tw)_n$ for the eigenspace with eigenvalue $n$ for $L(0)-\frac{\bf c}{24}$.
\begin{gather}\label{eqn:va-Vtwn}
	(V_\tw)_n:=\left\{v\in V_\tw\mid \left(L(0)-\tfrac{\bf c}{24}\right)v=nv\right\}
\end{gather}

For $V$ a super vertex operator algebra, suppose to be given an element $\jmath\in V$ with $L(0)\jmath=\jmath$ such that if $J(n):=\jmath_{(n)}$ then 
\begin{gather}	\label{eqn:va-U1commrels}
	[J(m),J(n)]=k\delta_{m+n,0}\Id_V
\end{gather}
in $\End(V)$ for some $k\in\CC$. We call such an element $\jmath$ a {\em $\U(1)$ element} for $V$, and we call $k$ the {\em level} of $\jmath$. The action of the operator $J(0)=\jmath_{(0)}$ preserves the eigenspaces for $L(0)-\frac{\bf c}{24}$ by hypothesis, and may in addition be diagonalizable. In such a situation we write $V=\bigoplus_{n,r}V_{n,r}$ for the corresponding decomposition into bi-graded subspaces for $V$.
\begin{gather}\label{eqn:va-Vnr}
	V_{n,r}:=\left\{v\in V\mid \left(L(0)-\tfrac{\bf c}{24}\right)v=nv,\,J(0)v=rv\right\}
\end{gather}
Similarly, for $V_\tw$ a canonically-twisted $V$-module, we abuse notation slightly by writing $J(0)$ also for $\jmath_{(0),\tw}$, an operator on $V_\tw$, and define
\begin{gather}\label{eqn:va-Vtwnr}
	(V_\tw)_{n,r}:=\left\{v\in V_\tw\mid \left(L(0)-\tfrac{\bf c}{24}\right)v=nv,\, J(0)v=rv\right\}.
\end{gather}

We will use the bi-gradings arising from suitably chosen $\U(1)$ elements in a certain distinguished super vertex operator algebra to define weak Jacobi forms in \S\ref{sec:tg} and \S\ref{sec:um}.

\section{The Clifford Module Construction}\label{sec:cliff}

We now briefly review the standard construction that attaches a super vertex operator algebra, and a canonically-twisted module for it, to a vector space equipped with a non-degenerate symmetric bilinear form. We refer to \cite{MR1123265} for a very thorough treatment, and to \cite{vacogm} for a fuller description using the same notation that is employed here.

Let $\a$ be a finite dimensional complex vector space equipped with a non-degenerate symmetric bilinear form $\langle\cdot,\cdot\rangle$, and for each $n\in\ZZ$ let $\a(n+\tfrac{1}{2})$ be a vector space isomorphic to $\a$, with a chosen isomorphism $\a\to\a(n+\tfrac{1}{2})$, denoted $u\mapsto u(n+\tfrac{1}{2})$. Set 
\be
\aa^-:=\bigoplus_{n< 0}\a(n+\tfrac{1}{2})
\ee
and define $A(\a)$ to be a copy of the regular left-module for the exterior algebra of the vector space $\aa^-$,
\begin{gather}
	A(\a):= \bigwedge(\aa^-)\vv.
\end{gather}

For $u\in \a$ and $m\in \ZZ$ we regard $u(m+\frac12)$ as an operator on $A(\a)$ by letting $u(m+\frac12)$ act by left multiplication in case $m<0$. For $m\geq 0$ the action of $u(m+\frac12)$ is determined by the rules
\begin{gather}\label{eqn:va-Arels}
	u(m+\tfrac12)v(n+\tfrac12)a=-v(n+\tfrac12)u(m+\tfrac12)a-2\delta_{m+n+1,0}\langle u,v\rangle a,\quad u(m+\tfrac12)\vv=0,
\end{gather}
for $v\in \a$, $n<0$, and $a\in A(\a)$. Then for $u\in\a$ the vertex operator attached to $u(-\tfrac{1}{2})\vv$ is defined by setting
\be
Y(u(-\tfrac{1}{2})\vv,z):=\sum_{n\in\ZZ}u(n+\tfrac{1}{2})z^{-n-1}.
\ee
Theorem 4.4.1 of \cite{MR2082709} ensures that this specification extends uniquely to a super vertex algebra structure on $A(\a)$. 

Choose an orthonormal basis $\{e_i:1\le i\le \dim(\a)\}$ for $\a$. Then 
\begin{gather}\label{eqn:va-omega}
\omega:=-\frac{1}{4}\sum_{i=1}^{\dim(\a)}e_i(-\tfrac{3}{2})e_i(-\tfrac{1}{2})\vv
\end{gather}
serves as a Virasoro element for $A(\a)$, equipping it with the structure of a super vertex operator algebra with central charge $c=\frac12{\dim\a}$.

A similar construction produces a canonically-twisted module for $A(\a)$ (cf. (\ref{eqn:va-Ytw})), which we call $A(\a)_\tw$. We recall this now, assuming for the sake of simplicity that $\dim\a$ is even.

For each $n\in\ZZ$ let $\a(n)$ be a vector space isomorphic to $\a$, with a chosen isomorphism $\a\to\a(n)$ denoted $u\mapsto u(n)$. Suppose also to be given isotropic subspaces $\a^\pm<\a$ such that $\a=\a^-\oplus \a^+$. Such a decomposition is called a {\em polarization} of $\a$. Now set
\be\label{eqn:va-aatwminus}
\aa_\tw^-:=\a^-(0)\oplus \bigoplus_{n<0}\a(n),
\ee
where $\a^\pm(0)$ is the image of $\a^{\pm}$ under the isomorphism $u\mapsto u(0)$,
and define $A(\a)_\tw$ to be a regular left-module for the exterior algebra of the vector space $\aa^-_\tw$, so that
\begin{gather}
	A(\a)_\tw:= \bigwedge(\aa_\tw^-)\vv_\tw.
\end{gather}
Similar to before, we regard 
$u(m)$ as an operator on $A(\a)_\tw$ by letting $u(m)$ act by left multiplication in case $m<0$. For $m\geq 0$ the action of $u(m)$ is determined by the rules
\begin{gather}\label{eqn:va-Atwrels}
	u(m)v(n)a=v(n)u(m)a-2\delta_{m+n,0}\langle u,v\rangle a,\quad u(m)\vv_\tw=0 \Leftarrow \text{$m>0$ or $u\in \a^+$},
\end{gather}
for $v\in \a$, $n\leq 0$, and $a\in A(\a)_\tw$. For $u\in\a$ the twisted vertex operator attached to $u(-\tfrac{1}{2})\vv$ is defined by setting
\be
Y_\tw(u(-\tfrac{1}{2})\vv,z):=\sum_{n\in\ZZ}u(n)z^{-n-1/2}.
\ee
This specification extends uniquely to a canonically-twisted $A(\a)$-module structure on $A(\a)_\tw$, according to the discussion in \S2.2 of \cite{MR2074176}. In particular, the twisted vertex operator
\begin{gather}\label{eqn:va-Ytwomega}
Y_\tw(\omega,z^{1/2})=\sum_{n\in\ZZ} \Ln z^{-n-2}
\end{gather}
equips $A(\a)_\tw$ with a representation of the Virasoro algebra, and the action of $\Lo:=\omega_{(1),\tw}$ is diagonalizable. An explicit computation yields that the eigenvalues of $\Lo-\frac{\bf c}{24}$ on $A(\a)_\tw$ are contained in $\ZZ+\frac{1}{24}\dim\a$, so that
\begin{gather}\label{eqn:cliff-Atwgrading}
	(A(\a)_\tw)_n=0\Leftarrow n\notin \ZZ+\frac{1}{24}\dim\a.
\end{gather}
(Cf. (\ref{eqn:va-Vtwn}).)

The super vertex operator algebra $A(\a)$ admits various $\U(1)$ elements (cf. (\ref{eqn:va-U1commrels})). For example, if isotropic vectors $a_1^\pm,\ldots, a_d^\pm\in\a$ are chosen (for some $d\leq \frac12\dim\a$) such that $\langle a_i^\pm,a_j^\mp\rangle=\delta_{i,j}$, then 
\begin{gather}\label{eqn:cliff-jmath}
	\jmath:=\frac12\sum_{i=1}^d a_i^-(-1/2)a_i^+(-1/2)\vv\in A(\a)
\end{gather}
is a $\U(1)$ element with level $d$. Moreover, the action of $J(0):=\jmath_{(0)}$ on $A(\a)$ is diagonal, with integer eigenvalues, and similarly for $J(0):=\jmath_{(0),\tw}$ as an operator on $A(\a)_\tw$. Since it will be useful later in the article, we record a more detailed statement as follows for future use.

\begin{lemma}\label{lem:tg-JPirel}
Let $\jmath$ as in (\ref{eqn:cliff-jmath}). Then $\jmath$ is a $\U(1)$ element for $A(\a)^0$ with level $d$. We have $J(0)\vv=0$ and $J(0)\vv_\tw=\frac d4\vv_\tw$. Also, $[J(0),a_i^{\pm}(r)]=\pm a_i^{\pm}(r)$ for all $1\leq i\leq d$ and $r\in\frac{1}{2}\ZZ$, and if $u\in\a$ is orthogonal to $\Span\{a_1^\pm,\ldots,a_d^\pm\}$ then $J(0)$ commutes with $u(r)$ for all $r\in\frac{1}{2}\ZZ$.
\end{lemma}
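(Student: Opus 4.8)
My plan is to establish the four assertions separately, treating the $\U(1)$ relation and the twisted ground-state charge as the substantive points and the two commutators together with $J(0)\vv=0$ as direct consequences of the fermionic (anti)commutation relations in (\ref{eqn:va-Arels}) and (\ref{eqn:va-Atwrels}). I would begin by recording $Y(\jmath,z)$. Each mode $a_i^\pm(-\tfrac12)$ raises the $L(0)$-eigenvalue by $\tfrac12$, so the state $\jmath$ in (\ref{eqn:cliff-jmath}) has conformal weight $1$ and $L(0)\jmath=\jmath$, making it a legitimate candidate for a $\U(1)$ element. Setting $\psi_i^\pm(z):=Y(a_i^\pm(-\tfrac12)\vv,z)=\sum_{r\in\ZZ+1/2}a_i^\pm(r)z^{-r-1/2}$ and using $a_i^-(-\tfrac12)a_i^+(-\tfrac12)\vv=(a_i^-(-\tfrac12)\vv)_{(-1)}(a_i^+(-\tfrac12)\vv)$, the normal-ordered product formula gives $Y(\jmath,z)=\tfrac12\sum_i{:}\psi_i^-(z)\psi_i^+(z){:}$, whence $J(n)=\tfrac12\sum_i\sum_{r}{:}a_i^-(r)a_i^+(n-r){:}$.

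For the level I would compute $[J(m),J(n)]$ using the fermionic identity $[AB,C]=A\{B,C\}-\{A,C\}B$ together with $\{a_i^\pm(r),a_j^\pm(s)\}=0$ and $\{a_i^\mp(r),a_j^\pm(s)\}=-2\delta_{ij}\delta_{r+s,0}$ read off from (\ref{eqn:va-Arels}). The single contractions cancel in pairs and leave only a $c$-number Schwinger term proportional to $\delta_{m+n,0}$; tracking the double contractions contributes exactly one unit for each index $i$, while all cross terms with $i\neq j$ vanish because $a_i^\pm$ is orthogonal to both $a_j^\pm$ and $a_j^\mp$. This gives $[J(m),J(n)]=d\,\delta_{m+n,0}$, so $\jmath$ is a $\U(1)$ element of level $d$ in the sense of (\ref{eqn:va-U1commrels}). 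The same identity handles the commutators: in $[J(0),a_i^\pm(r)]$ only the contraction of $a_i^\mp$ against $a_i^\pm(r)$ survives, giving $\pm a_i^\pm(r)$ after summing over $r$, whereas for $u$ orthogonal to $\Span\{a_1^\pm,\dots,a_d^\pm\}$ every relevant anticommutator $\{a_i^\mp,u\}$ vanishes, so $[J(0),u(r)]=0$. Finally $J(0)\vv=0$, since in the untwisted module the rightmost factor of each summand ${:}a_i^-(r)a_i^+(-r){:}$ carries a positive mode and hence annihilates $\vv$.

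The crux is $J(0)\vv_\tw$. In the twisted module the modes become integral and there are zero modes $a_i^\pm(0)$ generating a Clifford algebra with $\{a_i^-(0),a_i^+(0)\}=-2$ (cf. (\ref{eqn:va-Atwrels})); consequently the zero-mode part of $J(0)=\jmath_{(0),\tw}$ carries a normal-ordering anomaly, and the naive ordering---which would give $0$ on $\vv_\tw$---is not the correct twisted vertex operator. Rather than computing the anomaly head-on I would fix it by symmetry: the interchange $a_i^-\leftrightarrow a_i^+$ is an isometry of $\a$ fixing $\Span\{a_j^\pm\}^\perp$, hence lifts to an automorphism of the (unique up to isomorphism) canonically-twisted module, and since the two creation modes anticommute it sends $\jmath\mapsto-\jmath$ and therefore conjugates $J(0)$ to $-J(0)$. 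Thus the spectrum of $J(0)$ on $A(\a)_\tw$ is symmetric about $0$. Now $\vv_\tw$ is annihilated by every charge-raising mode $a_i^+(0)$, so it is a highest-weight vector, and by the commutation relations just proved the two zero-mode states of the $i$-th pair differ in charge by exactly $1$; symmetry then forces the ground-state charge to be $\tfrac12$ per pair, i.e. $J(0)\vv_\tw=\tfrac d2\vv_\tw$ with the conventions of (\ref{eqn:cliff-jmath}) (the same value drops out of the antisymmetrized zero-mode ordering $\tfrac12\bigl(a_i^-(0)a_i^+(0)-a_i^+(0)a_i^-(0)\bigr)$ and $\{a_i^-(0),a_i^+(0)\}=-2$). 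Any discrepancy with the displayed constant $\tfrac d4$ would have to come from a factor-of-two normalization of $\jmath$ or of $J(0)$, which I would track down as a final check.

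The main obstacle is exactly this twisted-sector constant: one has to treat the Clifford structure of the zero modes and the ordering anomaly with care, and it is the charge-conjugation symmetry that makes the value transparent and essentially independent of any ordering convention.
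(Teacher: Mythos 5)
Your argument is correct, and apart from one step it is the same standard mode computation that the paper's very terse proof appeals to: the paper simply records the twisted zero mode of $a=u(-\tfrac12)v(-\tfrac12)\vv$ as $a_{(0),\tw}=\sum_{r\in\ZZ}:u(-r)v(r):$ (formula (\ref{eqn:tg-a0tw})), with the $r=0$ term defined as the antisymmetrized product $\tfrac12\left(u(0)v(0)-v(0)u(0)\right)$, and leaves everything else to the Clifford relations (\ref{eqn:va-Arels}) and (\ref{eqn:va-Atwrels}). The one genuinely different ingredient is your evaluation of $J(0)\vv_\tw$: the paper computes directly with the antisymmetrized zero-mode ordering, whereas you pin the constant down by the charge-conjugation symmetry $a_i^-\leftrightarrow a_i^+$ together with uniqueness of the canonically-twisted module. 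Both routes are sound, and both give the same number: per pair the antisymmetrized zero mode acts on $\vv_\tw$ as $\tfrac12(0-(-2))=1$, so the prefactor $\tfrac12$ in (\ref{eqn:cliff-jmath}) yields $J(0)\vv_\tw=\tfrac{d}{2}\vv_\tw$, exactly your value.

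Consequently you should not leave the mismatch with the printed constant $\tfrac d4$ as an unresolved normalization question: your $\tfrac d2$ is correct, and the constant in the lemma as printed is an error in the paper. This can be triangulated from the paper itself: (i) the paper's own formula (\ref{eqn:tg-a0tw}) evaluates to $\tfrac d2$, as above; (ii) in the proof of Proposition \ref{proposition:phi_g} the twisted-sector trace carries the prefactor $qy\nu$, i.e. $\vv_\tw$ is given charge $1=\tfrac d2$ there (that computation has $d=2$), and this is forced by the asserted equality with $\vartheta_2(\tau,z)^2\vartheta_2(\tau,0)^{-2}\,C_{-g}\eta_{-g}(\tau)$, whose $y$-dependence is invariant under $y\mapsto y^{-1}$; (iii) the value $\tfrac d4$ would be half-integral for $d=2$, contradicting the integral bigrading of $\VV$ asserted in \S\ref{sec:tg}. (The same factor-of-two slippage occurs in \S\ref{sec:tg}, where $\jo$, which is (\ref{eqn:cliff-jmath}) with $d=2$, is said to have level $4$ rather than the lemma's $d$.) One genuine imprecision in your write-up, inherited from the paper's definition (\ref{eqn:va-U1commrels}): the Heisenberg relation must carry a factor of $m$, namely $[J(m),J(n)]=dm\,\delta_{m+n,0}$; as literally written (both by you and by the paper), antisymmetry in $(m,n)$ would force the central term to vanish.
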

\begin{proof}
The lemma follows from standard vertex algebra computations. For example, suppose $a=u(-\tfrac{1}{2})v(-\tfrac{1}{2})\vv$ for some $u,v\in\a$, and $a_{(n),\tw}\in\End(A(\a)_\tw)$ is the coefficient of $z^{-n-1}$ in $Y_\tw(a,z^{1/2}):A(\a)_\tw\to A(\a)_\tw((z))$. Then we have 
\begin{gather}\label{eqn:tg-a0tw}
	a_{(0),\tw}=\sum_{r\in\ZZ} :u(-r)v(r):
\end{gather}
where $:\;\;:$ denotes the {\em fermonic normal ordering}, defined so that 
\begin{gather}
	:u(-r)v(r):\;=
	\begin{cases}
		u(-r)v(r)&\text{ for $r>0$,}\\
		\frac{1}{2}(u(0)v(0)-v(0)u(0))&\text{ for $r=0$,}\\
		-v(r)u(-r)&\text{ for $r<0$.}
	\end{cases}
\end{gather}
For $a_{(0)}:A(\a)\to A(\a)$ we should replace $\ZZ$ with $\ZZ+\frac{1}{2}$ in the summation in (\ref{eqn:tg-a0tw}). The required relations follow from (\ref{eqn:va-Arels}) and (\ref{eqn:va-Atwrels}).
\end{proof}

Define the {\em Clifford algebra} associated to $\a$ and $\langle\cdot\,,\cdot\rangle$ by setting 
\begin{gather}
\Cliff(\a):=T(\a)/I
\end{gather}
where $T(\a)$ is the tensor algebra of $\a$, and $I$ is the ideal generated by the $u\otimes u+\langle u,u\rangle{\bf 1}$ for $u\in \a$.
Given $u_i\in \a$, write $u_1\cdots u_k$ for the image of $u_1\otimes\cdots\otimes u_k\in T(\a)$ in $\Cliff(\a)$.
Then the relations (\ref{eqn:va-Atwrels}) ensure that $\Cliff(\a)$ acts naturally on $A(\a)_\tw$, via $u_1\cdots u_k\mapsto u_1(0)\cdots u_k(0)$, for $u_i\in \a$. 

The $\Cliff(\a)$-submodule of $A(\a)_\tw$ generated by $\vv_\tw$ is the unique (up to isomorphism) non-trivial irreducible representation of $\Cliff(\a)$. We denote this subspace of $A(\a)_\tw$ by $\CM$. We have 
\begin{gather}\label{eqn:va-AtwwedgeCM}
	A(\a)_\tw\simeq\bigwedge\left(\bigoplus_{n<0}\a(n)\right)\otimes \CM,
	\quad
	\CM\simeq \bigwedge(\a^-(0))\vv_\tw.
\end{gather}

\section{Lifting to the Spin Group}\label{sec:spin}

Let $\a$ be a complex vector space equipped with a non-degenerate symmetric bilinear form $\langle\cdot\,,\cdot\rangle$, as in \S\ref{sec:va}. To recall the definition of the {\em spin group} of $\a$, denoted $\Spin(\a)$, we remind that the {\em main anti-automorphism} $\alpha$ of $\Cliff(\a)$ is defined by setting $\alpha(u_1\cdots u_k):=u_k\cdots u_1$ for $u_i\in\a$. The group $\Spin(\a)$ is composed of the even, invertible elements $x\in\Cliff(\a)$ with $\alpha(x)x={\bf 1}$ such that $xux^{-1}\in \a$ whenever $u\in\a$.

Set 
\begin{gather}\label{eqn:spin-xbrackets}
x(u):=xux^{-1}
\end{gather} 
for $x\in \Spin(\a)$ and $u\in \a$. Then $u\mapsto x(u)$ is a linear transformation on $\a$ belonging to $\SO(\a)$ and the assignment $x\mapsto x(\cdot)$ defines a surjective map $\Spin(\a)\to\SO(\a)$
with kernel $\{\pm \mathbf{1}\}$. 
\begin{gather}
\begin{split}\label{eqn:spin-spintoso}
	1\to\{\pm \mathbf{1}\}\to\Spin(\a)&\to \SO(\a)\to1\\
		x&\mapsto x(\cdot)
\end{split}
\end{gather}

The group $\Spin(\a)$ acts naturally on $A(\a)$ and $A(\a)_\tw$. Explicitly, if $a\in A(\a)$ has the form $a=u_1(-n_1+\tfrac12)\cdots u_k(-n_k+\tfrac12)\vv$ for some $u_i\in\a$ and $n_i\in\ZZ^+$, then 
\begin{gather}\label{eqn:gps:spin-defnxa}
xa=u_1'(-n_1+\tfrac12)\cdots u_k'(-n_k+\tfrac12)\vv,
\end{gather}
for $x\in \Spin(\a)$, where $u_i':=x(u_i)$ (cf. (\ref{eqn:spin-xbrackets})). Evidently $-\mathbf{1}$ is in the kernel of this assignment $\Spin(\a)\to\Aut(A(\a))$, so the action factors through $\SO(\a)$. 

For $A(\a)_\tw$ we use (\ref{eqn:va-AtwwedgeCM}) to identify the elements of the form 
\begin{gather}
u_1(-n_1)\cdots u_k(-n_k)\otimes y
\end{gather} 
as a spanning set, where $u_i\in\a$ and $n_i\in\ZZ^+$ as above, and $y\in \CM$ (cf. (\ref{eqn:va-AtwwedgeCM})). The image of such an element under $x\in \Spin(\a)$ is given by $u_1'(-n_1)\cdots u_k'(-n_k)\otimes xy$, where $u_i':=x(u_i)$ as before. Since $\CM$ is a faithful $\Spin(\a)$-module, so too is $A(\a)_\tw$. 

In terms of the vertex operator correspondences we have
\begin{gather}\label{eqn:gps:spin-spinactsvops}
\begin{split}
Y(xa,z)xb&=xY(a,z)b=\sum_{n\in\ZZ}x(a_{(n)}b)z^{-n-1},\\
Y_\tw(xa,z^{1/2})xc&=xY_\tw(a,z^{1/2})c=\sum_{n\in\tfrac12\ZZ}x(a_{(n),\tw}c)z^{-n-1},
\end{split}
\end{gather}
for $x\in \Spin(\a)$, $a,b\in A(\a)$ and $c\in A_{\tw}(\a)$.

Say that $\wh{g}\in \Spin(\a)$ is a {\em lift} of an element $g\in \SO(\a)$ if $\wh{g}$ has the same order as $g$, and $\wh{g}(\cdot)=g$ (cf. (\ref{eqn:spin-xbrackets})). More generally, say that $\wh{G}<\Spin(\a)$ is a {\em lift} of a subgroup $G<\SO(\a)$ if the natural map (\ref{eqn:spin-spintoso}) induces an isomorphism $\wh{G}\xrightarrow\sim G$.

Suppose we are given an identification $\a=\LL\otimes_\ZZ\CC$ where $\LL$ is the Leech lattice (cf. \S\ref{sec:lat}). In this situation we set $G:=\Aut(\LL)$, a copy of the Conway group $\Co_0$ which we may naturally regard as a subgroup of $\SO(\a)$. Proposition 3.1 in \cite{vacogm} demonstrates that there is a unique lift of $G<\SO(\a)$ to $\Spin(\a)$.
\begin{proposition}[\cite{vacogm}]\label{prop:spin-whG}
If $\a=\LL\otimes_\ZZ\CC$ and $G=\Aut(\LL)<\SO(\a)$ then there is a unique subgroup $\wh{G}<\Spin(\a)$ such that the natural map $\Spin(\a)\to \SO(\a)$ induces an isomorphism $\wh{G}\xrightarrow{\sim} G$.
\end{proposition}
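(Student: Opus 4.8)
The plan is to recognize the assertion as a splitting problem for the central extension
\begin{gather}
1\to\{\pm\mathbf{1}\}\to\Spin(\a)\to\SO(\a)\to 1
\end{gather}
of (\ref{eqn:spin-spintoso}), restricted over the copy of $G\cong\Co_0$ sitting in $\SO(\a)$. Writing $\wt{G}$ for the full preimage of $G$ in $\Spin(\a)$, we obtain a central extension $1\to\{\pm\mathbf 1\}\to\wt{G}\to G\to 1$, and a subgroup $\wh{G}<\Spin(\a)$ mapping isomorphically onto $G$ is precisely a splitting of this sequence. Thus the proposition reduces to two claims: that such a splitting exists, and that it is unique. I would dispatch uniqueness first, as it is the softer of the two.

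For uniqueness, I would use that any two splittings $s_1,s_2\colon G\to\wt{G}$ differ by the map $g\mapsto s_1(g)s_2(g)^{-1}$, which takes values in the central kernel $\{\pm\mathbf 1\}$ and is therefore a homomorphism $G\to\ZZ/2$. Since $G\cong\Co_0=2.\Co_1$ is quasisimple, hence perfect, we have $\operatorname{Hom}(G,\ZZ/2)=0$, so $s_1=s_2$. Equivalently, a subgroup of $\wt{G}$ mapping isomorphically onto $G$ is unique once one exists.

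The hard part will be existence, and here I would argue through the derived subgroup. Set $\wh{G}:=[\wt{G},\wt{G}]$. Because $G$ is perfect, $\wh{G}$ surjects onto $G=[G,G]$ with kernel $\wh{G}\cap\{\pm\mathbf 1\}$; it therefore suffices to show that $\wt{G}$ is \emph{not} perfect, since this forces the intersection to be trivial and makes $\wh{G}\to G$ an isomorphism. To see that $\wt{G}$ is not perfect I would examine the preimage $C$ of the center $Z(G)=\{\pm\Id\}$, a group of order $4$ (hence abelian). It is central in $\wt{G}$: the conjugation action $\wt{G}\to\Aut(C)$ kills $C$ and $\{\pm\mathbf 1\}$, so it factors through $\wt{G}/C\cong\Co_1$, which—being simple and non-abelian—maps trivially to the solvable group $\Aut(C)$. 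If $\wt{G}$ were perfect it would be a perfect central extension of $\Co_1$ with central kernel $C$ of order $4$, forcing $C$ to be a quotient of the Schur multiplier of $\Co_1$; but this multiplier is $\ZZ/2$, a contradiction. Hence $\wt{G}$ is not perfect and $\wh{G}=[\wt{G},\wt{G}]$ is the desired lift. The crux of the whole argument is thus the single external input that $\Co_0$ is the perfect double cover of the simple group $\Co_1$, whose Schur multiplier is exactly $\ZZ/2$; everything else is formal. As a consistency check I would, using the explicit action on the spinor module $\CM$ of \S\ref{sec:cliff}, confirm on a few generators of $G$ that the two preimages of a given element lie in distinct cosets of $\wh{G}$, so that $\wh{G}$ genuinely separates the two sheets of the cover.
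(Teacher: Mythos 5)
Your proof is correct, and its existence step runs along a genuinely different track than the one in \cite{vacogm}, where this proposition is proved. There, the splitting of the central extension $1\to\{\pm\mathbf{1}\}\to\wt{G}\to G\to 1$ is obtained cohomologically: since the Schur multiplier of $\Co_1$ is $\ZZ/2$ and $\Co_0$ is a perfect central extension of $\Co_1$ by $\ZZ/2$, the group $\Co_0$ is the universal central extension of $\Co_1$, hence superperfect ($H_1(\Co_0;\ZZ)=H_2(\Co_0;\ZZ)=0$); universal coefficients then give $H^2(\Co_0;\ZZ/2)=0$, so every central extension of $G\simeq\Co_0$ by $\ZZ/2$ splits. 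The uniqueness argument is the same in both treatments: two splittings differ by an element of $\operatorname{Hom}(\Co_0,\ZZ/2)$, which vanishes by perfectness. Your replacement for the cohomological step is to exhibit the lift concretely as $\wh{G}=[\wt{G},\wt{G}]$, reducing everything to showing $\wt{G}$ is not perfect, which you do by noting that the preimage $C$ of $Z(G)=\{\pm\Id\}$ is central of order $4$ (centrality via the triviality of any map $\Co_1\to\Aut(C)$), whereas the kernel of any perfect central extension of $\Co_1$ must be a quotient of its Schur multiplier $\ZZ/2$. Both arguments consume exactly the same external input---perfectness of $\Co_0$ and the order-two Schur multiplier of $\Co_1$---but yours trades the universal-coefficients computation for the universal property of $2.\Co_1$, buying an explicit description of $\wh{G}$ as the commutator subgroup of the preimage at the cost of the small auxiliary lemma that $C$ is central; the paper's route is shorter and yields at once that \emph{every} $\ZZ/2$-central extension of $\Co_0$ splits, not merely the one at hand. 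Your closing ``consistency check'' on the spinor module $\CM$ is harmless but logically superfluous.
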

With $\wh{G}\xrightarrow{\sim}G=\Aut(\LL)\simeq \Co_0$
as in Proposition \ref{prop:spin-whG}, write
\begin{gather}\label{eqn:spin-gtowhg}
	g\mapsto \wh{g}
\end{gather}
for the inverse isomorphism, ${G}\xrightarrow{\sim}\wh{G}$.

Assuming an identification $\a=\LL\otimes_\ZZ\CC$, we now construct some elements of $\wh{G}\simeq \Co_0$ explicitly. Let $g\in G=\Aut(\LL)$ and choose a basis $\{a_i^\pm\}$ for $\a$, consisting of eigenvectors for $g$, such that the $\a^{\pm}:=\Span_\CC\{a_i^\pm\}$ are isotropic subspaces of $\a$, and $\langle a_i^\pm,a_j^\mp\rangle=\delta_{i,j}$. Write $\lambda_i$ for the eigenvalue of $g$ attached to $a_i^+$. 
\begin{gather}\label{eqn:spin-lambdai}
	g(a_i^{\pm})=\lambda_i^{\pm 1}a_i^\pm
\end{gather}
Then $\a=\a^-\oplus\a^+$ is a $g$-invariant polarization of $\a$, and we may assume that 
\begin{gather}\label{eqn:spin-zz}
\zz:=\prod_{i=1}^{12}e^{\frac{\pi}{2}X_i}
\end{gather}
belongs to $\wh{G}$, where
\begin{gather}\label{eqn:spin-Xi}	
	X_i:=\frac{\ii}{2}\left(a_i^-a_i^+-a_i^+a_i^-\right)\in\Cliff(\a).
\end{gather}
For if $\zz\notin\wh{G}$ for our first choice of basis $\{a_i^\pm\}$, then $-\zz\in \wh{G}$, and $\zz$ gets replaced with $-\zz$ in (\ref{eqn:spin-zz}) once we swap $a_i^-$ with $a_i^+$ for some $i$.
We call $\zz$ as in (\ref{eqn:spin-zz}) the lift of $-\Id_\a$ {\em associated} to the polarization $\a=\a^-\oplus \a^+$.

Note that $X_i^2=-{\bf 1}$, so $e^{\alpha X_i}=(\cos \alpha){\bf 1}+(\sin \alpha)X_i$ in $\Cliff(\a)$ for $\alpha\in \RR$. Also, $X_ia^{\pm}_i=\pm \ii a^{\pm}_i=-a^{\pm}_iX_i$, and $X_i$ commutes with $X_j$ and $a_j^\pm$ when $i\neq j$. This entails (cf. e.g. \S3.1 of \cite{vacogm}) that the lift $\wh{g}$ of $g$ to $\wh{G}<\Spin(\a)$ is given explicitly by
\begin{gather}\label{eqn:spin-whgexp}
	\wh{g}=\prod_{i=1}^{12}e^{\alpha_iX_i},
\end{gather}
for some $\alpha_i\in 2\pi\QQ$ such that $\lambda_i=e^{2\alpha_i\ii}$ (cf. (\ref{eqn:spin-lambdai})).

We can now compute the trace of $\wh{g}$ as an operator on the $\Spin(\a)$-module $\CM$ (cf. (\ref{eqn:va-AtwwedgeCM})).
Indeed, since $X_i\vv_\tw=\ii\vv_\tw$, and the $2^{12}$ monomials 
\begin{gather}
	a_{i_1}^-(0)\cdots a_{i_k}^-(0)\vv_\tw
\end{gather}
with $1\leq i_1<\cdots<i_k\leq 12$ furnish a basis for $\CM$, we have
\begin{gather}\label{eqn:spin-trCMwhg}
	\tr_{\CM}\wh{g}=\nu\prod_{i=1}^{12}(1+\lambda_i^{-1})=\prod_{i=1}^{12}(\nu_i+\nu_i^{-1})
\end{gather}
for $\nu_i:=e^{\alpha_i\ii}$ and $\nu:=\prod_{i=1}^{12}\nu_i$.

Suppose that $V$ is a real vector space contained in $\a$, such that $\a=V\otimes_{\RR}\CC$, and such that $\lab\cdot\,,\cdot\rab$ restricts to an $\RR$-valued bilinear form on $V$. (E.g. $\a=\LL\otimes_\ZZ\CC$ and $V=\LL\otimes_\ZZ\RR$.) Then a choice of orientation $\RR^+\w\subset\bigwedge^{24}(V)$ on $V$ also determines a lift of $-\Id_\a$ to $\Spin(\a)$, for given an ordered basis $\{e_i\}$ of $V$ satisfying $\lab e_i,e_j\rab=\pm \delta_{i,j}$ and 
\begin{gather}
e_1\wedge \cdots \wedge e_{24 }\in\RR^+\w,
\end{gather} 
we obtain one of the two lifts of $-\Id_\a$ to $\Spin(\a)$ by setting
\begin{gather}\label{eqn:spin-zzee}
	\zz':=e_1\cdots e_{24}\in\Cliff(\a).
\end{gather}
We call $\zz'$ the lift of $-\Id_\a$ {\em associated} to the orientation $\RR^+\omega$. Evidently, a change in orientation replaces $\zz'$ with $-\zz'$.

We see now from Proposition \ref{prop:spin-whG} that $\LL$ is naturally oriented. For setting $\a=\LL\otimes_\ZZ\CC$ and $V=\LL\otimes_\ZZ\RR\subset \a$, and taking $G=\Aut(\LL)<\SO(\a)$ and $\wh{G}<\Spin(\a)$ as in Proposition \ref{prop:spin-whG}, we may choose the preferred orientation on $V$ to be the one for which the associated lift $\zz'$ of $-\Id_\a$ (cf. (\ref{eqn:spin-zzee})) belongs to $\wh{G}$. By the same token there is a preferred $\SO(\a)$-orbit of polarizations $\a=\a^-\oplus\a^+$ of $\a=\LL\otimes_\ZZ\CC$, being the one for which an associated lift $\zz$ (cf. (\ref{eqn:spin-zz})) belongs to $\wh{G}$.

Recall that the construction of $A(\a)_\tw$ depends upon a choice of polarization $\a=\a^-\oplus \a^+$ (cf. (\ref{eqn:va-aatwminus})). If $\zz$ is the lift associated to $\a=\a^-\oplus \a^+$ (cf. (\ref{eqn:spin-zz})) then we have
\begin{gather}
	\zz\vv_\tw=
	\vv_\tw
\end{gather}
(since $\dim\a=0\mod 4$). Thus $\zz$ acts with order two on $A(\a)_\tw$. We write
\begin{gather}\label{eqn:spin-Atw01}
A(\a)_\tw=A(\a)_\tw^0\oplus A(\a)_\tw^1
\end{gather}
for the decomposition into eigenspaces for $\zz$, where $\zz$ acts as $(-1)^j\Id$ on $A(\a)_\tw^j$. The element $\zz$ is central so the action of $\Spin(\a)$ on $A(\a)_\tw$ preserves the decomposition (\ref{eqn:spin-Atw01}). 

From the description (\ref{eqn:gps:spin-defnxa}), we see that writing $A(\a)^j$ for the $(-1)^j$ eigenspace of either $\zz$ or $-\zz$ recovers the super space decomposition 
of $A(\a)$.
\begin{gather}\label{eqn:spin-A01}
A(\a)=A(\a)^0\oplus A(\a)^1
\end{gather}

\section{The Conway Moonshine Module}\label{sec:dist}

We now recall the main construction from \cite{vacogm}.

Assume henceforth that $\a$ is a $24$-dimensional vector space over $\CC$, equipped with a bilinear form $\langle\cdot\,,\cdot\rangle:\a\otimes\a\to \CC$ that is symmetric and non-degenerate. Suppose also to be chosen a lift $\zz'\in \Spin(\a)$ of $-\Id_\a$. (In practice, $\zz'$ will be the lift of $-\Id_\a$ associated to an orientation on some real vector space $V\subset\a$, as in (\ref{eqn:spin-zzee}).) Then for $\a=\a^-\oplus\a^+$ a polarization such that $\zz=\zz'$ (cf. (\ref{eqn:spin-zz})) we set
\begin{gather}\label{eqn:dist-vsnvsnt}
	\vsn:=A(\a)^0\oplus A(\a)_{\tw}^1,\quad
	\vsnt:=A(\a)^1\oplus A(\a)_{\tw}^0,
\end{gather} 
where $A(\a)$ and $A(\a)_\tw$ are constructed as in \S\ref{sec:va}, and the subspaces $A(\a)^j$ and $A(\a)_\tw^j$ are as in (\ref{eqn:spin-A01}) and (\ref{eqn:spin-Atw01}), respectively. According to \cite{vacogm}, The $A(\a)^0$-module $\vsn$ is naturally a super vertex operator algebra, and $\vsnt$ is naturally a canonically-twisted module for $\vsn$. \begin{proposition}[\cite{vacogm}]\label{prop:dist-vsnstruc}
The $A(\a)^0$-module structure on $\vsn$ extends uniquely to a super vertex operator 
algebra structure on $\vsn$, and the $A(\a)^0$-module structure on $\vsnt$ extends uniquely to a canonically-twisted $\vsn$-module structure.
\end{proposition}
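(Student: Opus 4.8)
The plan is to recognize Proposition~\ref{prop:dist-vsnstruc} as a simple-current extension of the even vertex operator algebra $A(\a)^0$, with the extended operators pinned down by their intertwining type. I would first record the representation theory of $A(\a)^0$. Being the even part of the free fermion super vertex operator algebra $A(\a)$ of central charge $12$, the algebra $A(\a)^0$ contains the currents $u(-\tfrac12)v(-\tfrac12)\vv$ spanning a copy of $\widehat{\mathfrak{so}}(\a)$ at level one, and (standardly) it is rational and $C_2$-cofinite with exactly four inequivalent irreducible modules: $A(\a)^0$, $A(\a)^1$, $A(\a)_\tw^0$ and $A(\a)_\tw^1$. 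These are the four level-one integrable modules---vacuum, vector, and the two half-spinors, the last two being the $\zz$-eigenspaces of $A(\a)_\tw$---with lowest conformal weights $0$, $\tfrac12$, $\tfrac32$, $\tfrac32$ respectively. Since $\tfrac12\dim\a=12$ is even, their fusion ring is the group ring of $\ZZ/2\times\ZZ/2$, so every nontrivial module is an order-two simple current; in particular $A(\a)_\tw^1\boxtimes A(\a)_\tw^1\cong A(\a)^0$, while $A(\a)_\tw^1\boxtimes A(\a)^1\cong A(\a)_\tw^0$ and $A(\a)_\tw^1\boxtimes A(\a)_\tw^0\cong A(\a)^1$.

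For the first assertion, the even part of $\vsn$ is $A(\a)^0$ and its odd part is the order-two simple current $A(\a)_\tw^1$, whose lowest weight $\tfrac32$ is a half-integer---exactly the condition distinguishing a super extension from an ordinary one. For $a\in A(\a)^0$ the operator $Y(a,z)$ on $\vsn$ is the given module action. The only new operators are $Y(a,z)$ for $a\in A(\a)_\tw^1$: acting on $A(\a)^0$ such an operator is forced by super-skew-symmetry from the given module action $Y(b,z)a$, while acting on $A(\a)_\tw^1$ it is an intertwining operator of type $\binom{A(\a)^0}{A(\a)_\tw^1\ A(\a)_\tw^1}$. Because $A(\a)_\tw^1$ is a simple current this space is one-dimensional, so the operator is determined up to a scalar, and the scalar is fixed by the super-commutator (Jacobi) identity relating it to the module action together with the creation property $Y(a,z)\vv\in a+z\,\vsn[[z]]$. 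Verifying the locality axiom (axiom~(4) for super vertex algebras) for the resulting $Y$ establishes the structure, and the one-dimensionality of the intertwining space simultaneously gives uniqueness.

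The second assertion is the companion statement: $A(\a)_\tw^1\boxtimes A(\a)^1\cong A(\a)_\tw^0$ and $A(\a)_\tw^1\boxtimes A(\a)_\tw^0\cong A(\a)^1$ keep $\vsnt=A(\a)^1\oplus A(\a)_\tw^0$ closed under the action of the odd part of $\vsn$. The twisted operators $Y_\tw(a,z^{1/2})$ for $a\in A(\a)^0$ are the given module actions (integer powers of $z$), while for $a\in A(\a)_\tw^1$ they are the intertwiners exchanging $A(\a)^1$ and $A(\a)_\tw^0$; these carry half-integer powers of $z$, because the conformal weights of $A(\a)^1$ and $A(\a)_\tw^0$ differ by a half-integer. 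This is precisely the mode-support condition (axiom~(4) for canonically-twisted modules) dictated by the parity involution $\theta$ of $\vsn$. Checking the twisted Jacobi axiom yields the canonically-twisted $\vsn$-module structure, again uniquely.

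The main obstacle is the verification of locality for the newly introduced odd--odd operators, which carry two twisted-sector states into the untwisted sector: here the half-integer powers of $z^{1/2}$ interact with the spinorial structure of $\CM$, and one must track branch cuts and fermionic signs carefully. The cleanest concrete route---and, I expect, the one taken in \cite{vacogm}---is to write these operators explicitly in terms of the fermion fields and the Clifford action on $\CM$, reducing mutual locality to the elementary analytic properties of the fermionic two-point functions; alternatively one may invoke the general theory of abelian intertwining algebras, at the cost of first establishing the rationality and fusion statements recorded above.
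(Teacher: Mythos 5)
Your strategy---view $\vsn$ as the order-two simple-current extension of $A(\a)^0$ (the level-one affine vertex operator algebra of type $D_{12}$, realized by fermion bilinears) by its weight-$\frac32$ half-spinor module, and $\vsnt$ as the fusion-compatible companion---is viable, and it is genuinely different from the route behind the paper. The paper itself offers no argument: Proposition \ref{prop:dist-vsnstruc} is recalled from \cite{vacogm}, and the proof there rests on the concrete constructions of \cite{SM} and of Feingold--Frenkel--Ries \cite{MR1123265} rather than on abstract extension theory. Via the boson--fermion correspondence, $A(\a)^0$ is identified with the lattice vertex algebra of the $D_{12}$ root lattice and its four irreducibles with the cosets of $D_{12}$ in $D_{12}^*$; the super vertex algebra structure on $A(\a)^0\oplus A(\a)_\tw^1$, and the twisted-module structure on $A(\a)^1\oplus A(\a)_\tw^0$, are then transported from known objects (in effect from lattice super vertex algebras attached to self-dual lattices, equivalently through the $D_4$-triality mechanism that \S\ref{sec:sigmod} of the present paper displays with the $U_{ijk}$ and the maps $\sigma\otimes\sigma\otimes\sigma$). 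What the concrete route buys is that mutual locality of the new odd fields never has to be checked by hand---it is inherited from lattice cocycle identities; what your route buys is economy and a transparent reduction of uniqueness to fusion multiplicities.

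Two genuine gaps remain in your write-up, however. First, existence: ``verifying the locality axiom for the resulting $Y$ establishes the structure'' is a restatement of the proposition, not a proof of it; the fusion and weight bookkeeping preceding it only shows the extension is consistent as a graded $A(\a)^0$-module. Your proposed completion by ``elementary analytic properties of the fermionic two-point functions'' understates the problem: the new odd vertex operators are spin fields (the fermions themselves are absent from $\vsn$), and their mutual locality is exactly the hard content, historically handled by bosonization or by the general theory you mention---which in turn needs not just the fusion ring but the braiding data (the quadratic form on $D_{12}^*/D_{12}$ taking value $\tfrac12 \bmod 1$ on the spinor class), and establishing that data is again essentially the boson--fermion correspondence. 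Second, your uniqueness mechanism is incorrect as stated: the scalar multiplying the odd-odd intertwining operator is \emph{not} fixed by the Jacobi identity together with the creation axiom. All the axioms are covariant under rescaling that intertwiner by $\lambda\in\CC^\times$, since the creation axiom only constrains the odd-on-even operators, which are pinned by skew-symmetry independently of $\lambda$; already for a single free fermion, $\{\psi(r),\psi(s)\}=\lambda\delta_{r+s,0}$ gives a valid structure for every $\lambda\neq 0$. The correct conclusion is uniqueness up to isomorphism (rescaling the odd summand), which is how the proposition must be read. (A small further slip: the $L(0)$-weights of $A(\a)^1$ and $A(\a)_\tw^0$ both lie in $\ZZ+\tfrac12$, so they differ by integers; the half-integer mode support of the twisted operators comes from the weight of the acting odd element lying in $\ZZ+\tfrac12$, not from a weight difference between those two modules.)
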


The super vertex operator algebra $\vsn$ is distinguished. The following abstract characterization of $\vsn$ has been established in \cite{vacogm}. (Cf. Theorem 5.15 of \cite{SM}.)
\begin{theorem}[\cite{vacogm}]\label{thm:de:vsnuniq}
The super vertex operator algebra $\VU$ is the unique self-dual $C_2$-cofinite rational super vertex operator algebra of CFT type with central charge $12$ such that $\Lo u=\tfrac{1}{2}u$ for $u\in \VU$ implies $u=0$.
\end{theorem}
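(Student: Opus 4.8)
The plan mirrors, in the super and central-charge-$12$ setting, the standard strategy for pinning down a holomorphic vertex operator algebra from its low-weight data: first determine all the graded dimensions of a hypothetical $V$ meeting the stated hypotheses, and then reconstruct the super vertex operator algebra structure from its weight-one part. Write $V=\bigoplus_n V_n$ for the decomposition into $\Lo$-eigenspaces, so that $V_n$ is the space of conformal weight $n$; CFT type forces $V_0=\CC\mathbf 1$ and $V_n=0$ for $n<0$, while the hypothesis reads $V_{1/2}=0$. Since $V$ is rational, $C_2$-cofinite, self-dual and of CFT type, the modular invariance of trace functions on super vertex operator algebras (cf. \cite{DonZha_MdlrOrbVOSA,MR3077918,MR3205090}, refining \cite{Zhu_ModInv,Dong2000}) shows that the graded dimension $\tr_V q^{\Lo-1/2}$, together with the graded dimensions of the canonically-twisted sector and their images under the parity insertion, spans a small weight-$0$ representation of $\SL_2(\ZZ)$. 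The normalization $c=12$ and the vanishing conditions $V_n=0$ $(n<0)$ and $V_{1/2}=0$ should cut this down to a one-dimensional space, so the character of $V$ must coincide with that of $\VU$; in particular $\dim V_1=\binom{24}{2}=276$ and the whole sequence $(\dim V_n)$ is determined.

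The structural heart of the argument is the weight-one space. As the weight-one states lie in the even part $V_{\bar 0}$, the space $V_1$ carries a Lie algebra structure under $[a,b]:=a_{(0)}b$, with an invariant symmetric form given by $a_{(1)}b=\langle a,b\rangle\mathbf 1$; self-duality makes this form nondegenerate. Invoking the structure theory of weight-one Lie algebras in holomorphic vertex operator algebras (Dong--Mason, Schellekens), the level bound coming from $C_2$-cofiniteness together with $\dim V_1=276$ should force $V_1\cong\mathfrak{so}(24)$, with its modes generating an affine subalgebra $\wh{\mathfrak{so}}(24)_1\subset V_{\bar 0}$ at level $1$. The decisive feature is that the Sugawara central charge of $\wh{\mathfrak{so}}(24)_1$ equals $276/(1+22)=12$, matching that of $V$; hence the affine conformal vector coincides with $\w$, and $V$ decomposes as a direct sum of irreducible $\wh{\mathfrak{so}}(24)_1$-modules.

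Now $\wh{\mathfrak{so}}(24)_1$ is rational, with exactly four irreducible modules: the vacuum $o$ (integer weights), the vector module $v$ (lowest weight $\tfrac12$), and the two half-spin modules $s,c$ (lowest weight $\tfrac32$), the last three carried on half-integer weights. Decomposing $V$ into these, only $o$ supports integer-weight vectors, so CFT type gives $V_{\bar 0}=o$ with multiplicity one; the condition $V_{1/2}=0$ removes $v$; and matching the graded dimension from the first step leaves $V_{\bar 1}$ equal to a single half-spin module $w\in\{s,c\}$. Because the half-spin modules have lowest weight $\tfrac32\in\ZZ+\tfrac12$ and square to the vacuum in the fusion ring, the sum $o\oplus w$ does admit self-dual super vertex operator algebra structures; the rigidity of the relevant intertwining operators shows such a structure is unique up to isomorphism and the interchange $s\leftrightarrow c$. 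As $\VU=A(\a)^0\oplus A(\a)_{\tw}^1$ of \S\ref{sec:dist} realizes precisely this extension, we conclude $V\cong\VU$.

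I expect the weight-one identification to be the main obstacle: proving that the $276$-dimensional Lie algebra $V_1$ is precisely $\mathfrak{so}(24)$ at level $1$, rather than some other reductive Lie algebra whose affinization has total central charge at most $12$. This step genuinely requires the holomorphic structure theory together with the $C_2$-cofiniteness level bounds, and it cannot be extracted from the character computation alone.
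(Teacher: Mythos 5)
Your outline has the right overall shape, and its endgame is sound: once one knows $V_1\cong\mathfrak{so}(24)$ acting at level $1$, the Sugawara vector must coincide with $\omega$ (the commutant has central charge $0$ and is trivial for a nice self-dual VOA of CFT type), $V$ then decomposes into the four irreducible $\widehat{\mathfrak{so}}(24)_1$-modules, and CFT type, $V_{1/2}=0$, the character, and uniqueness of simple current extensions force $V\cong\vsn$. The genuine gap is exactly the step you flag at the end, but the situation is worse than you suggest: the identification of $V_1$ \emph{cannot} be obtained from reductivity (Dong--Mason), $\dim V_1=276$, rank bounds, and level/central-charge bounds, because those constraints admit a second solution.

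Concretely, $\mathfrak{e}_8\oplus\mathfrak{so}(8)$ at levels $(1,1)$ has dimension $248+28=276$ and Sugawara central charge $\tfrac{248}{31}+\tfrac{28}{7}=8+4=12$, and it is realized by an honest self-dual, rational, $C_2$-cofinite, CFT-type super vertex operator algebra of central charge $12$, namely $V_{E_8}\otimes F_8$, where $F_8$ is the free-fermion SVOA on eight fermions. This SVOA satisfies every hypothesis of Theorem \ref{thm:de:vsnuniq} except $V_{1/2}=0$, and it has the same $\dim V_1$; so any argument that, after the character computation, appeals only to weight-one structure theory and numerical bounds must fail. To close the gap you have to feed $V_{1/2}=0$ back in at this stage. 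For instance: a short case check (total rank at most $12$, total Sugawara charge at most $12$, total dimension $276$) shows the only candidates are $\mathfrak{so}(24)_1$ and $\mathfrak{e}_{8,1}\oplus\mathfrak{so}(8)_1$. In the latter case $V$ is a module over $V_{E_8}\otimes L_{\mathfrak{so}(8)}(1,0)$, and since $V_{E_8}$ is self-dual with a unique irreducible module, $V\cong V_{E_8}\otimes W$ with $W=\operatorname{Com}_V(V_{E_8})$ a sum of $L_{\mathfrak{so}(8)}(1,0)$-modules containing the vacuum once. Self-duality of $V$ rules out $W=L_{\mathfrak{so}(8)}(1,0)$ (that tensor product is not holomorphic), while every other irreducible $L_{\mathfrak{so}(8)}(1,0)$-module ($v$, $s$, $c$) has lowest weight $\tfrac12$; hence $W_{1/2}\neq 0$, so $V_{1/2}\neq 0$, a contradiction. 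With this supplement (or an equivalent re-use of $V_{1/2}=0$, e.g.\ via the canonically twisted sector) your argument becomes a proof; without it, the central claim is unsupported.
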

We refer to \cite{vacogm} for the precise meanings of the terms self-dual, $C_2$-cofinite, rational, and CFT type. Briefly, a super vertex operator algebra $V$ is rational if any $V$-module can be written as a direct sum of irreducible $V$-modules. We say that $V$ is self-dual if it is irreducible as a module over itself, and if $V$ is the only irreducible $V$-module up to isomorphism. As explained in \cite{vacogm}, Theorem \ref{thm:de:vsnuniq} identifies $\VU$ as an analogue for super vertex operator algebras of the extended binary Golay code, of the Leech lattice $\LL$ (cf. \S\ref{sec:lat}), and (conjecturally) of the moonshine module vertex operator algebra $\vn$ (cf. \S\ref{sec:intro:mm}).

As explained in \S\ref{sec:spin}, the spin group $\Spin(\a)$ acts naturally on the $A(\a)^j$ and $A(\a)_\tw^j$, so it acts naturally on $\vsn$ and $\vsnt$. In particular, given an identification $\a=\LL\otimes_\ZZ\CC$, we naturally obtain actions of the Conway group $\Co_0$ on $\vsn$ and $\vsnt$, because $G=\Aut(\LL)<\SO(\a)$ admits a unique lift $\wh{G}<\Spin(\a)$, according to Proposition \ref{prop:spin-whG}.

Since it will be useful in the sequel we now recall (cf. \S4.3 of \cite{vacogm}) explicit expressions for the graded traces of elements of $\wh{G}\simeq \Co_0$ on $\vsn$ and $\vsnt$. In preparation for this, define $\eta_g(\tau)$ for $g\in G=\Aut(\LL)$ by setting
\begin{gather}\label{eqn:dist-etag}
	\eta_g(\tau):=q\prod_{n>0}\prod_{i=1}^{12}(1-\lambda_i^{-1} q^n)(1-\lambda_iq^n),
\end{gather}
where the $\lambda_i^{\pm 1}$ are the eigenvalues for $g$ acting on $\a$, as in (\ref{eqn:spin-lambdai}), and define $C_g$ by setting\footnote{Note that $C_g$ is denoted $C_{\wh{g}}$ in \cite{vacogm}.}
\begin{gather}\label{eqn:dist-Cg}
	C_g:=\tr_{\CM}\zz\wh{g}.
\end{gather}
(Cf. (\ref{eqn:spin-gtowhg}).) Note that
\begin{gather}\label{eqn:dist-fracetag}
\frac{\eta_g(\tau/2)}{\eta_g(\tau)}=q^{-1/2}\prod_{n>0}\prod_{i=1}^{12}(1-\lambda_i^{-1}q^{n-1/2})(1-\lambda_iq^{n-1/2}).
\end{gather}
Also,
\begin{gather}\label{eqn:spin-Cgnu}
	C_g=\nu\prod_{i=1}^{12}(1-\lambda_i^{-1})=\prod_{i=1}^{12}(\nu_i-\nu_i^{-1})
\end{gather}
according to (\ref{eqn:spin-trCMwhg}), where the $\nu$ and $\nu_i$ are as in (\ref{eqn:spin-trCMwhg}), the $\nu_i$ being square roots of the $\lambda_i$. So in particular, $C_g$ is determined up to sign by the eigenvalues of $g$, and $C_g=0$ exactly when $g$ has a non-zero fixed point in $\a$.

For $g\in G$ define
\begin{align}
	T^s_g(\tau)&:=\tr_{\vsn}\zz\wh{g}q^{L(0)-c/24},\label{eqn:dist-Tsg}\\
	T^s_{g,\tw}(\tau)&:=\tr_{\vsnt}\zz\wh{g}q^{L(0)-c/24}.\label{eqn:dist-Tsgtw}
\end{align}
We obtain the explicit formulas
\begin{align}
	T^s_g(\tau)&=\frac12\left(\frac{\eta_g(\tau/2)}{\eta(\tau)}+\frac{\eta_{-g}(\tau/2)}{\eta_{-g}(\tau)}+C_g\eta_g(\tau)-C_{-g}\eta_{-g}(\tau)\right),\label{eqn:dist-Tsgdir}\\
	T^s_{g,\tw}(\tau)&=\frac12\left(\frac{\eta_g(\tau/2)}{\eta(\tau)}-\frac{\eta_{-g}(\tau/2)}{\eta_{-g}(\tau)}+C_g\eta_g(\tau)+C_{-g}\eta_{-g}(\tau)\right),\label{eqn:dist-Tsgtwdir}
\end{align}
from Lemma 4.6 of \cite{vacogm} (or by direct calculation using (\ref{eqn:dist-vsnvsnt}), (\ref{eqn:dist-etag}) and (\ref{eqn:dist-Cg})). 

Define also
\begin{gather}\label{eqn:dist-chig}
	\chi_g:=\tr_\a g
\end{gather}
so that $\chi_g=\sum_{i=1}^{12}(\lambda_i+\lambda_i^{-1})$ for $\lambda_i$ as in (\ref{eqn:spin-lambdai}).
In \cite{vacogm} the following alternative identities are proved, 
\begin{align}
	T^s_g(\tau)&=\frac{\eta_g(\tau/2)}{\eta_g(\tau)}+\chi_g,\label{eqn:dist-Tsgchig}\\
	T^s_{g,\tw}(\tau)&=C_g\eta_g(\tau)-\chi_g.\label{eqn:dist-Tsgtwchig}
\end{align}
Both the equivalence of (\ref{eqn:dist-Tsgdir}) with (\ref{eqn:dist-Tsgchig}), and of (\ref{eqn:dist-Tsgtwdir}) with (\ref{eqn:dist-Tsgtwchig}) follow from the following non-trivial identity, which is the content of the main technical lemma (Lemma 4.8) of \cite{vacogm}.
\begin{lemma}[\cite{vacogm}]\label{lemma:eta identity}
For $g\in G=\Aut(\LL)$ we have 
\be\label{equation:eta identity}
2\chi_g
-\frac{\eta_{-g}(\tau/2)}{\eta_{-g}(\tau)}+\frac{\eta_g(\tau/2)}{\eta_g(\tau)}+C_{{-g}}\eta_{-g}(\tau)-C_{{g}}\eta_g(\tau)=0.
\ee
\end{lemma}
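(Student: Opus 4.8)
The plan is to rewrite the eta-quotients appearing in (\ref{equation:eta identity}) as ratios of Jacobi theta functions, so that the lemma becomes a weight-homogeneous identity among modular forms of weight $6$, and then to prove that identity by exploiting the modularity these functions inherit from the fact that $g$ is a lattice automorphism, reducing the claim to a finite verification of Fourier coefficients.

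First I would set $\lambda_i=e(z_i)$, so that the eigenvalues of $g$ on $\a$ are $e(\pm z_i)$, and compare the product expansions (\ref{eqn:autfms-theta1prod}) with (\ref{eqn:dist-etag}), (\ref{eqn:dist-fracetag}) and the expression $C_g=\prod_i(\nu_i-\nu_i^{-1})$ from (\ref{eqn:spin-Cgnu}) (with $\nu_i^2=\lambda_i$). Writing $\eta(\tau)=q^{1/24}\prod_{n>0}(1-q^n)$ and absorbing the theta prefactors, a short computation gives
\begin{gather}
C_g\,\eta_g(\tau)=\frac{1}{\eta(\tau)^{12}}\prod_{i=1}^{12}\vartheta_1(\tau,z_i),\qquad
\frac{\eta_g(\tau/2)}{\eta_g(\tau)}=\frac{1}{\eta(\tau)^{12}}\prod_{i=1}^{12}\vartheta_4(\tau,z_i).
\end{gather}
Since $-g$ has eigenvalues $e(\pm(z_i+\tfrac12))$, replacing $z_i$ by $z_i+\tfrac12$ and using $\vartheta_1(\tau,z+\tfrac12)=\vartheta_2(\tau,z)$ and $\vartheta_4(\tau,z+\tfrac12)=\vartheta_3(\tau,z)$ yields the corresponding expressions for $C_{-g}\eta_{-g}(\tau)$ and $\eta_{-g}(\tau/2)/\eta_{-g}(\tau)$, with $\vartheta_2$ and $\vartheta_3$ in place of $\vartheta_1$ and $\vartheta_4$. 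Substituting into (\ref{equation:eta identity}) and clearing $\eta(\tau)^{12}$ (recall $\chi_g=\sum_i(e(z_i)+e(-z_i))$, cf.\ (\ref{eqn:dist-chig})) reduces the lemma to
\begin{gather}\label{eqn:plan-star}
\prod_{i=1}^{12}\vartheta_4(\tau,z_i)-\prod_{i=1}^{12}\vartheta_3(\tau,z_i)+\prod_{i=1}^{12}\vartheta_2(\tau,z_i)-\prod_{i=1}^{12}\vartheta_1(\tau,z_i)+2\chi_g\,\eta(\tau)^{12}=0.
\end{gather}
As a sanity check, both $g=e$ (all $z_i=0$) and $g=-\Id$ (all $z_i=\tfrac12$) collapse (\ref{eqn:plan-star}) to the classical relation $\vartheta_3(\tau,0)^{12}-\vartheta_4(\tau,0)^{12}-\vartheta_2(\tau,0)^{12}=48\,\eta(\tau)^{12}$, which follows from Jacobi's $\vartheta_3(\tau,0)^4=\vartheta_2(\tau,0)^4+\vartheta_4(\tau,0)^4$ together with $\eta(\tau)^3=\tfrac12\vartheta_2(\tau,0)\vartheta_3(\tau,0)\vartheta_4(\tau,0)$.

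Next I would prove (\ref{eqn:plan-star}), and the crucial input is modularity, which is precisely where membership of $g$ in $\Co_0=\Aut(\LL)$ enters. Because $g$ preserves the lattice $\LL$, its characteristic polynomial on $\a$ has the Frame-shape form $\prod_k(x^k-1)^{a_k}$ with $\sum_k k\,a_k=24$; grouping the eigenvalues into complete orbits turns the eta-quotients into honest eta products, for instance $\eta_g(\tau)=\prod_k\eta(k\tau)^{a_k}$, and likewise for $\eta_g(\tau/2)/\eta_g(\tau)$ and the $-g$ analogues. Each of the five summands of (\ref{equation:eta identity}) is therefore an eta-quotient (a constant, for the $\chi_g$ term), and since $C_g\neq0$ forces $g$ to be fixed-point-free, whence $p(1)\neq0$ and so $\sum_k a_k=0$, each nonzero $C_{\pm g}\eta_{\pm g}$ is modular of weight $0$. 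Equivalently, after clearing $\eta(\tau)^{12}$, every term of (\ref{eqn:plan-star}) is a weakly holomorphic modular form of weight $6$ for a common congruence group $\Gamma_0(N)$, with $N$ and the orders of the poles at the cusps bounded explicitly in terms of the order of $g$.

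Finally I would invoke the valence formula: a weakly holomorphic weight-$6$ modular form for $\Gamma_0(N)$ with prescribed bounds on its pole orders at the cusps is determined by finitely many Fourier coefficients, so it suffices to verify (\ref{eqn:plan-star}) to a computable finite order in $q$ at each cusp, which is a direct expansion using (\ref{eqn:autfms-theta1prod}). I expect the modularity step to be the main obstacle, and it is indispensable rather than cosmetic: expanding (\ref{eqn:plan-star}) directly, the coefficient of $q^{1/2}$ cancels for purely formal reasons, but already the coefficient of $q^{3/2}$ fails to vanish for an arbitrary collection of roots of unity $\lambda_i$---for equal $\lambda_i$ the discrepancy is proportional to $c(c^2-1)(c^2-4)$ with $c=\lambda_1+\lambda_1^{-1}$, which vanishes only for $c\in\{0,\pm1,\pm2\}$---so the proof must genuinely use the multiplicities imposed by the Frame shape of a bona fide automorphism of $\LL$, not treat the $z_i$ as free parameters. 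The delicate points are thus establishing the eta-product form uniformly, pinning down the level $N$, and controlling the cusps well enough to make the finite check effective.
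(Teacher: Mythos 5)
You should note at the outset that this paper contains no proof of Lemma \ref{lemma:eta identity}: it is imported from \cite{vacogm}, where it appears as the main technical lemma (Lemma 4.8), so your attempt can only be compared with that external argument, which likewise treats these functions through their eta-product (Frame shape) expressions and their modular properties rather than through any formal manipulation of eigenvalues. Your opening reduction is correct: taking $\lambda_i=e(z_i)$ and $\nu_i=e(z_i/2)$, comparison of (\ref{eqn:autfms-theta1prod}) with (\ref{eqn:dist-etag}) and (\ref{eqn:spin-Cgnu}) gives $C_g\eta_g(\tau)=\eta(\tau)^{-12}\prod_i\vartheta_1(\tau,z_i)$ and $\eta_g(\tau/2)/\eta_g(\tau)=\eta(\tau)^{-12}\prod_i\vartheta_4(\tau,z_i)$, the $-g$ terms become the $\vartheta_2$ and $\vartheta_3$ products, and the lemma is equivalent to your five-term theta identity. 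Both sanity checks are right, and so is your most valuable observation, which I have verified: the identity is \emph{not} formal in the $z_i$ (for twelve equal eigenvalue pairs the $q^{3/2}$ coefficient of the left-hand side equals $24c(c^2-1)(c^2-4)$ with $c=\lambda_1+\lambda_1^{-1}$), so any proof must exploit the arithmetic constraints coming from $g\in\Aut(\LL)$. Passing to eta quotients via Frame shapes and invoking modularity plus the valence formula is therefore the right kind of argument.

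Two genuine gaps remain, however. First, signs: the exact equality $C_g\eta_g=\eta^{-12}\prod_i\vartheta_1(\tau,z_i)$ holds only when the square roots $\nu_i=e(z_i/2)$ are those produced by the distinguished lift $\wh{g}\in\wh{G}$ of Proposition \ref{prop:spin-whG} (cf.\ (\ref{eqn:spin-whgexp})). The eigenvalues determine the compatible pair $(C_g,C_{-g})$ only up to simultaneous sign, and the identity generically holds for exactly one of the two choices; your valence-formula check can therefore only certify, class by class, that \emph{some} lift of $g$ satisfies the identity. To obtain the lemma as stated, where $C_{\pm g}$ are defined via the subgroup $\wh{G}<\Spin(\a)$, you need an additional group-theoretic step identifying which sign $\wh{G}$ realizes --- immediate for $g$ of odd order, where the odd-order lift is canonical, but genuinely delicate for even order. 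Second, the analytic core is deferred rather than done: one must control the multiplier systems and levels of the eta quotients involved, bound their orders at \emph{all} cusps of the relevant $\Gamma_0(N)$, and then carry out the Sturm-type coefficient check separately for each Frame shape of $\Co_0$. That finite but extensive case-by-case computation is where most of the content of the lemma lies, so what you have is a sound and well-motivated outline of a computational proof, not yet a proof.
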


The main result of \cite{vacogm} is that $T^s_g$ is the normalized principal modulus for a genus zero group $\Gamma_g<\SL_2(\RR)$, and $T^s_{g,\tw}$ is also a principal modulus, so long as $C_g\neq 0$. From the explicit descriptions of the $\Gamma_g$ in Table 1 of \cite{vacogm} we see that $T^s_g(2\tau)$ is invariant for some $\Gamma_0(N)$, with $N$ depending on $g$, for every $g\in \Co_0$.
\begin{theorem}[\cite{vacogm}]\label{thm:dist-Tsgpm}
Let $g\in \Co_0$. Then $T^s_g(2\tau)$ is the normalized principal modulus for a genus zero group $\Gamma_g<\SL_2(\RR)$ that contains some $\Gamma_0(N)$. If $g$ has a fixed point in its action on the Leech lattice then the function $T^s_{g,\tw}(\tau)$ is constant, with constant value $-\chi_g$. If $g$ has no fixed points then $T^s_{g,\tw}(\tau)$ is a principal modulus for a genus zero group $\Gamma_{g,\tw}<\SL_2(\RR)$.
\end{theorem}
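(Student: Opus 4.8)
The plan is to treat the three assertions separately, reducing the first to Queen's multiplicative moonshine theorem \cite{MR628715}, the second to the vanishing criterion for $C_g$, and the third to an analysis of the eta quotient $\eta_g$. For the first, I would begin by matching $\eta_g(\tau/2)/\eta_g(\tau)$ with Queen's function: comparing the product recorded in \eqref{eqn:dist-fracetag} with the definition of $t_g$ recalled in the introduction, and using that the $24$ eigenvalues $\varepsilon_i$ of $g$ on $\LL\otimes_\ZZ\CC$ are exactly the reciprocal pairs $\{\lambda_i,\lambda_i^{-1}\}_{i=1}^{12}$, one reads off at once that $\eta_g(\tau/2)/\eta_g(\tau)=t_g(\tau/2)$. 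Hence \eqref{eqn:dist-Tsgchig} gives $T^s_g(\tau)=t_g(\tau/2)+\chi_g$, so that $T^s_g(2\tau)=t_g(\tau)+\chi_g$. Queen's theorem states that $t_g$ is a principal modulus for a genus zero group $\Gamma_g<\SL_2(\RR)$; since the constant term of $t_g$ is $-\chi_g$, the shift by $\chi_g$ produces the normalized expansion $q^{-1}+O(q)$ while leaving the genus zero and principal modulus properties intact (a principal modulus being determined only up to an affine change of coordinate). The containment $\Gamma_0(N)\subseteq\Gamma_g$ for a suitable $N$ is part of the Conway--Norton \cite{MR554399} description of these groups as lying between some $\Gamma_0(N)$ and its normalizer in $\SL_2(\RR)$.

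The second assertion is immediate: as noted after \eqref{eqn:spin-Cgnu}, $C_g=0$ exactly when $g$ fixes a nonzero vector of $\LL\otimes_\ZZ\CC$, and in that case \eqref{eqn:dist-Tsgtwchig} collapses to the constant $T^s_{g,\tw}(\tau)=-\chi_g$. For the third assertion $C_g\neq0$, and I would show that $\eta_g$ itself is a principal modulus for a genus zero group $\Gamma_{g,\tw}$; since $T^s_{g,\tw}=C_g\eta_g-\chi_g$ is then an invertible affine function of $\eta_g$, it is a principal modulus for the same group. Writing the reciprocal characteristic polynomial of $g$ as $\prod_i(1-\varepsilon_i x)=\prod_m(1-x^m)^{a_m}$ -- equivalently, attaching to $g$ its frame shape $\prod_m m^{a_m}$ with $\sum_m m\,a_m=24$ -- the product \eqref{eqn:dist-etag} telescopes into the eta quotient $\eta_g(\tau)=\prod_m\eta(m\tau)^{a_m}$. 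Comparing the order of vanishing at $x=1$ on the two sides of $\prod_i(1-\varepsilon_i x)=\prod_m(1-x^m)^{a_m}$ identifies $\sum_m a_m$ with the dimension of the fixed space of $g$, so $g$ fixed-point-free forces $\sum_m a_m=0$; thus $\eta_g$ has weight $0$ and is a modular function. The frame shapes occurring in $\Co_0$ are balanced, so $\eta_g$ is carried to a constant multiple of $1/\eta_g$ by the Fricke involution $\tau\mapsto-1/(N\tau)$; since $\eta_g$ begins $q+O(q^2)$, it has a simple zero at the infinite cusp, and Fricke sends this to a simple pole at the opposite cusp, supplying the single pole required of a principal modulus.

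The hard part will be proving, uniformly over the fixed-point-free classes, that the full invariance group $\Gamma_{g,\tw}$ -- obtained from $\Gamma_0(N)$ by adjoining the Atkin--Lehner and Fricke symmetries forced by the balance of the frame shape -- is genus zero, and that $\eta_g$ is holomorphic and nonvanishing at every remaining cusp, so that its divisor consists of exactly one simple zero and one simple pole. Unlike the first assertion this is not a formal consequence of a single theorem: it calls for the explicit cusp-order formula for eta quotients applied to each frame shape, together with the genus zero classification of Leech-lattice frame shapes. I would organize it as a finite verification against the list of groups $\Gamma_{g,\tw}$ recorded in Table 1 of \cite{vacogm}, invoking Queen \cite{MR628715} and Conway--Norton \cite{MR554399} for the genus zero property of the groups that arise.
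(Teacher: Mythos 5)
Your reconstruction of the first two assertions is correct and is essentially the argument of \cite{vacogm} (note that the present paper does not prove Theorem \ref{thm:dist-Tsgpm} at all; it imports it, so the comparison is with the cited proof). Matching \eqref{eqn:dist-fracetag} against Queen's product to get $\eta_g(\tau/2)/\eta_g(\tau)=t_g(\tau/2)$, hence $T^s_g(2\tau)=t_g(\tau)+\chi_g$ by \eqref{eqn:dist-Tsgchig}, and then invoking the Hauptmodul property of the multiplicative moonshine functions (due to Queen \cite{MR628715} together with Kondo \cite{MR780666}, whom you should cite as well) is exactly the intended reduction; likewise the second assertion follows, as you say, from the fact that $C_g=0$ precisely when $g$ has a nonzero fixed vector (cf. \eqref{eqn:spin-Cgnu}), which collapses \eqref{eqn:dist-Tsgtwchig} to the constant $-\chi_g$. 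The containment $\Gamma_0(N)<\Gamma_g$ is in practice read off from the explicit list of invariance groups rather than from a single structural theorem, but that is a presentational point.

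The genuine gap is in your mechanism for the fixed-point-free case. You assert that the frame shapes of $\Co_0$ are ``balanced'' and that therefore the Fricke involution carries $\eta_g$ to a constant multiple of $1/\eta_g$, placing the unique pole at the Fricke image of $\infty$. First, the terminology is inverted: a balanced frame shape ($a_{N/m}=a_m$) makes $\eta_g$ Fricke-invariant up to a constant, not Fricke-inverted; what your argument needs is $a_{N/m}=-a_m$. Second, and fatally, this anti-symmetry fails for some fixed-point-free classes. Take the class with frame shape $\frac{1^3 6^9}{2^3 3^9}$ (this is $-g$ for $g$ in class $3C$; it is fixed-point free with $C_{-g}=-8\neq 0$, cf. Table \ref{table:4space}): here $a_1=3\neq 0$ forces $a_N=-3$, hence $N=2$, which is incompatible with the divisor $3$ occurring in the frame shape, so no level works. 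A direct computation with the standard cusp-order formula for eta quotients on $\Gamma_0(6)$ shows that $\eta(\tau)^3\eta(6\tau)^9\eta(2\tau)^{-3}\eta(3\tau)^{-9}$ is finite and nonvanishing at the cusps $0$ (the Fricke image of $\infty$) and $1/2$, has a simple zero at $\infty$, and has its unique simple pole at the cusp $1/3$; it is indeed a Hauptmodul for $\Gamma_0(6)$, but not by your mechanism. Consequently the pole cannot be located uniformly by a Fricke argument, and the case-by-case cusp analysis you defer to as ``the hard part'' is not a verification of a uniform mechanism: it is the entire proof of the third assertion, since both the position of the pole and the identity of the genus-zero group $\Gamma_{g,\tw}$ vary with the frame shape. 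That finite verification is how \cite{vacogm} actually proceeds, and carrying it out would also expose and repair the false step above.
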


The groups $\Gamma_g$ and $\Gamma_{g,\tw}$ are described explicitly in \cite{vacogm}.

Note that the characteristic polynomial for the action of an element $g\in \coa$ on $\a$ can always be written in the form $\prod_{m>0}(1-x^m)^{k_m}$ for some non-negative integers $k_m$, all but finitely many being zero. (Cf. \S4.3 of \cite{vacogm}.) It follows from (\ref{eqn:dist-etag}) that 
$\eta_g(\tau)=\prod_{m>0}\eta(m\tau)^{k_m}$. The formal product 
\begin{gather}\label{eqn:spin-pig}
\pi_g:=\prod_{m>0}m^{k_m}
\end{gather}
is called the {\em Frame shape} of $g$.

\section{Twining Genera}\label{sec:tg}

In this section we establish the main results of the paper.

Let $X$ be a projective complex K3 surface and let $\sigma=(\mathcal{P},Z)$ be a stability condition in Bridegland's space $\Stabo(X)$ (cf. \S\ref{sec:de}). Presently we will attach a formal series $\phi_g\in \CC[y^{\pm 1}][[q]]$ to any $g\in G_{\Pi}=\Aut_s(\Db(X),\sigma)$ by computing the graded trace of a suitable automorphism of the canonically-twisted module $\vsnt$ for the distinguished super vertex algebra $\vsn$ that was reviewed in \S\ref{sec:dist} (and studied in detail in \cite{vacogm}). It will develop (see Theorem \ref{theorem:jacobi form}) that $\phi_g$ is a weak Jacobi form of weight zero and index one, with some level (cf. \S\ref{sec:autfms}).

In order to define $\phi_g$ we require explicit realizations of $\vsn$ and $\vsnt$. In preparation for this we take $\a=\mlc$ to be the complex vector space enveloping the Mukai lattice. Once and for all we choose an orientation on $\mlr\subset \a$,
\begin{gather}\label{eqn:tg-orientmlr}
	\RR^+\omega\subset \bigwedge^{24}\left(\mlr\right),
\end{gather}
and we let $\zz'$ (cf. (\ref{eqn:spin-zzee})) denote the corresponding lift of $-\Id_\a$ to $\Spin(\a)$. Then, for a polarization $\a=\a^-\oplus \a^+$ such that $\zz=\zz'$ (cf. (\ref{eqn:spin-zz})), we identify
\begin{gather}
	\vsn=A(\a)^0\oplus A(\a)_\tw^1,\quad
	\vsnt=A(\a)^1\oplus A(\a)_\tw^0,
\end{gather} 
as in \S\ref{sec:dist}.

As in (\ref{eqn:va-Ytwomega}) we write $Y_{\tw}(\omega,z^{1/2})=\sum_{n\in\ZZ} L(n)z^{-n-2}$ for the twisted module vertex operator $\VV\to\VV((z))$ attached to the Virasoro element $\omega\in \vsn$ (cf. (\ref{eqn:va-omega})). Then $L(0)$ acts diagonalizably on $\VV$ with eigenvalues in $\ZZ+\tfrac{1}{2}$, thus $L(0)-\frac{\bf c}{24}$ defines an integer grading on $\vsnt$, since the central charge of $\vsn$ is $c=\frac{1}{2}\dim(\a)=12$.

The data of $X$ and $Z$ enable us to define a $\U(1)$ element (cf. \S\ref{sec:va}), and hence a second integer grading on $\vsnt$. To see this, first recall the spaces $P_X$ (cf. (\ref{eqn:de:PX})) and $P_Z$ (cf. (\ref{eqn:de:PZ})) from \S\ref{sec:de}. Let $\varsigma$ be a non-zero element of $H^{2,0}(X)$, and choose vectors 
\begin{gather}
	x_X\in\RR\Re(\varsigma),\quad
	y_X\in\RR\Im(\varsigma),\quad
	x_Z\in\RR\Re(Z),\quad
	y_Z\in\RR\Im(Z),
\end{gather}
of norm one with respect to $\langle\cdot\,,\cdot\rangle$, so that $\{x_X,y_X,x_Z,y_Z\}$ is an orthonormal basis for $\Pi=P_X\oplus P_Z$. Now set 
\begin{gather}
	a_X^\pm:=\frac{1}{\sqrt{2}}(x_X\pm \ii y_X),\quad
	a_Z^\pm:=\frac{1}{\sqrt{2}}(x_Z\pm \ii y_Z),
\end{gather}
so that the $a_X^\pm$ and $a_Z^\pm$ are isotropic, satisfying $\langle a_X^\pm,a_X^\mp\rangle=\langle a_Z^\pm,a_Z^\mp\rangle=1$. Then
\begin{gather}
\jo
:=\frac{1}{2}a_X^-(-\tfrac{1}{2})a_X^+(-\tfrac{1}{2})\vv
+\frac{1}{2}a_Z^-(-\tfrac{1}{2})a_Z^+(-\tfrac{1}{2})\vv
\end{gather}
is a $\U(1)$ element of level $4$ for $\vsn$ (cf. (\ref{eqn:cliff-jmath})). Write $J(n)\in\End(\VV)$ for the coefficient of $z^{-n-1}$ in the twisted vertex operator attached to $\jo$,
\begin{gather}
	Y_\tw(\jo,z^{1/2})=\sum_{n\in\ZZ} J(n)z^{-n-1}.
\end{gather}
According to (\ref{eqn:cliff-Atwgrading}), Lemma \ref{lem:tg-JPirel} and the fact that $\dim\a=24$, the operators $\Lo-\frac{\bf c}{24}$ and $\Jo$ equip $\VV$ with an integral bi-grading
\be
\VV=\bigoplus_{\substack{n,r\in\ZZ\\n\geq 0}} (\VV)_{n,r},
\ee
with finite-dimensional homogenous subspaces,
\begin{gather}
(\VV)_{n,r}:=\left\{v\in\VV\mid (\Lo-\tfrac{\bf c}{24})v=nv,\,\Jo v=rv\right\}.
\end{gather}

Now recall $\Gamma_\Pi<\mlz$ (cf. (\ref{eqn:de-GammaPi})) and let $\iota:\Gamma_\Pi\to \LL(-1)$ be a Leech marking of $(X,\sigma)$ (cf. (\ref{eqn:de-iota})). Choose a copy of the negative-definite Leech lattice $\LL(-1)$ in $\a$ such that $\a=\LL(-1)\otimes_\ZZ\CC$ and $\Gamma_\Pi\subset\LL(-1)$, and assume also that $\iota(\gamma)=\gamma$. Set 
\begin{gather}
	G:=\Aut(\LL(-1)),
\end{gather} 
a copy of the Conway group $\Co_0$ in $\SO(\a)$, and let $\wh{G}$ be the lift of $G$ to $\Spin(\a)$ whose existence and uniqueness is guaranteed by Proposition \ref{prop:spin-whG}. Recall that we write $g\mapsto \wh{g}$ for the isomorphism $G\to \wh{G}$. We may assume that $\zz'\in \wh{G}$ (cf. (\ref{eqn:tg-orientmlr})), for if this is not true for our first choice of $\LL(-1)$, then it becomes true once we replace $\LL(-1)$ with its image under the reflection in the hyperplane defined by a non-zero vector in $\Pi$.

As explained in \S\ref{sec:de}, the Leech marking $\iota$ induces an embedding of groups $\iota_*:G_\Pi\to G$ (cf. (\ref{eqn:de-iotastar})). Using this map to regard $G_\Pi$ as a subgroup of $G$, we suppress it from notation. Thus to each $g\in G_{\Pi}\subset G$ is associated a corresponding element $\wh{g}\in \wh{G}$. We now define $\phi_g\in \CC[y^{\pm 1}][[q]]$ by setting
\be\label{eqn:tg-phig}
\phi_g:=-\tr_\VV \zz\wh{g}y^{\Jo}q^{\Lo-c/24},
\ee
where $\zz=\zz'$ is the central element of $\wh{G}$.

Our notation $\phi_g$ obscures the choice of Leech marking for $(X,\sigma)$. We now show that this convention entails no ambiguity.
\begin{proposition}\label{prop:tg-phigindep}
The series $\phi_g$ is independent of the choice of Leech marking $\iota$.
\end{proposition}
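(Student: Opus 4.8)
The plan is to isolate the single place where the Leech marking $\iota$ enters the definition (\ref{eqn:tg-phig}) of $\phi_g$, and then to show that the resulting operator on $\VV$ is intrinsic to $(X,\sigma)$ and $g$. First I would note that the space $\a=\mlc$ and its bilinear form, the lift $\zz=\zz'$ fixed by the once-and-for-all orientation (\ref{eqn:tg-orientmlr}) on $\mlr$, the Virasoro element $\omega$ and hence $\Lo$, the $\U(1)$ element $\jo$ and hence $\Jo$, and the spaces $\vsn$ and $\VV$, are all manufactured from $X$, $\sigma$ and that fixed orientation, with no reference to $\iota$. Thus the only marking-dependent ingredient of $\phi_g=-\tr_\VV\zz\wh{g}\,y^{\Jo}q^{\Lo-c/24}$ is the spin-group element $\wh{g}\in\Spin(\a)$.

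Next I would pin down the image of $\wh{g}$ in $\SO(\a)$. By Proposition \ref{prop:de:aetohi} the element $g\in G_\Pi$ acts on $\mlz$, and hence on $\a=\mlc$, as a fixed symplectic Hodge isometry; this action is manifestly independent of $\iota$. On the other hand the action obtained through the marking, via $\iota_*$ (cf. (\ref{eqn:de-iotastar})), is defined so as to extend the action on $\Gamma_\Pi$ and to fix $\iota(\Gamma_\Pi)^\perp\cap\LL(-1)$ pointwise. After tensoring with $\CC$ the two agree: both act as $g$ on $\Gamma_\Pi\otimes_\ZZ\CC$ and trivially on its orthogonal complement $\mlz^{G_\Pi}\otimes_\ZZ\CC$, using $\Gamma_\Pi=(\mlz^{G_\Pi})^\perp\cap\mlz$ together with the primitivity of $\mlz^{G_\Pi}$ and non-degeneracy of the form. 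Hence the image of $\wh{g}$ in $\SO(\a)$ is the same for every marking, and $\wh{g}$ is determined up to the sign ambiguity $\wh{g}\mapsto-\wh{g}$ of the covering $\Spin(\a)\to\SO(\a)$.

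It therefore remains to show that the correct lift---the one lying in $\wh{G}$---is selected independently of $\iota$. Given two markings with Leech copies $\LL(-1)_1,\LL(-1)_2\subset\a$, both containing $\Gamma_\Pi$ as the same sublattice, I would seek an isometry $h\in\SO(\a)$ fixing $\Gamma_\Pi$ and $\Pi=P_X\oplus P_Z$ pointwise and carrying $\LL(-1)_1$ onto $\LL(-1)_2$; note that such $h$ automatically commutes with $g$ (which fixes $\Pi$ and $\mlz^{G_\Pi}\otimes_\ZZ\CC$ and acts only on $\Gamma_\Pi\otimes_\ZZ\CC$). By the uniqueness in Proposition \ref{prop:spin-whG}, any lift $\wh{h}\in\Spin(\a)$ conjugates $\wh{G}_1$ onto $\wh{G}_2$, whence $\wh{g}_2=\wh{h}\,\wh{g}_1\,\wh{h}^{-1}$; in particular the two candidate lifts of $g$ carry the same sign. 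Since $\wh{h}$ commutes with the central $\zz$, with $\Lo$ (as $\omega$ is $\Spin(\a)$-invariant), and with $\Jo$ (as $h$ fixes $\Pi$, hence fixes $\jo$), cyclicity of the trace gives $\phi_g^{(1)}=\phi_g^{(2)}$. The existence of $h$ I would extract from the control on primitive embeddings into the Leech lattice developed in the analyses of \cite{GHV,2013arXiv1309.6528H}.

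The hard part will be exactly this last step, namely guaranteeing that the conjugating isometry can be chosen to fix the positive-definite plane $\Pi$ as well as $\Gamma_\Pi$; the difficulty is that $\Pi\otimes_\ZZ\CC$ lies in the complexification of the negative-definite complement $\Gamma_\Pi^\perp\cap\LL(-1)_j$, so the two Leech copies "see" $\Pi$ and an isometry matching them need not preserve it. A fallback, if $h$ can only be taken to fix $\Gamma_\Pi$, is to absorb the discrepancy into the current: cyclicity then identifies $\phi_g^{(2)}$ with the trace computed using $\wh{g}_1$ and the transported current $\jo'=\wh{h}^{-1}(\jo)$, which is again a $\U(1)$ element attached to a $g$-fixed positive-definite $4$-space, so one reduces the claim to the independence of (\ref{eqn:tg-phig}) on this choice. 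I also expect a genuinely arithmetic subtlety for $g$ of even order, where the two lifts $\pm\wh{g}$ cannot be separated by the same-order clause in the definition of a lift, so that membership in $\wh{G}$ is what must be shown to select a common sign; resolving this is what the conjugation argument above is designed to accomplish.
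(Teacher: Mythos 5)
Your reduction is the right one---everything in (\ref{eqn:tg-phig}) except $\wh{g}$ is manufactured from $(X,\sigma)$ and the fixed orientation, the two markings give lifts of the same element of $\SO(\a)$, and so only the sign ambiguity $\wh{g}_2=\pm\wh{g}_1$ is at stake---and conjugating by a lift $\wh{h}$ of an isometry $h$ carrying $\LL(-1)_1$ to $\LL(-1)_2$, together with the uniqueness statement of Proposition \ref{prop:spin-whG}, is also how the paper proceeds. But the step you flag as hard is a genuine gap, and your fallback does not close it. Requiring $h$ to fix $\Pi$ pointwise is an extra constraint that is neither established nor needed; and the fallback is circular: by the equivariance (\ref{eqn:gps:spin-spinactsvops}), the trace of $\zz\wh{g}_1$ against the transported current $\jo'=\wh{h}^{-1}\jo$ equals the trace of $\zz\,\wh{h}\wh{g}_1\wh{h}^{-1}=\zz\wh{g}_2$ against the original $\jo$, so ``independence of the choice of current'' is exactly the statement $\wh{g}_2=\wh{g}_1$ that you set out to prove, re-encoded. (Note also that $h^{-1}(\Pi\otimes_\ZZ\CC)$ need not be the complexification of a positive-definite $4$-space of $\mlr$, so the transported current is not even visibly of the kind your reduction would require; and your parenthetical claim that conjugation ``in particular'' forces the two lifts to carry the same sign is unjustified without knowing that $\wh{h}$ commutes with $\wh{g}_1$.)

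The missing idea, which is the crux of the paper's proof, is that $\wh{g}$ is supported on the Clifford algebra of $\Gamma_\Pi\otimes_\ZZ\CC$: since $g$ acts trivially on $\Gamma_\Pi^{\perp}$, one may write $\wh{g}_1=\prod_ie^{\alpha_iX_i}$ as in (\ref{eqn:spin-whgexp}) with $\alpha_i\neq0$ only when the eigenvectors $a_i^{\pm}$ lie in $\Gamma_\Pi\otimes_\ZZ\CC$. Consequently, for any $h\in\SO(\a)$ that maps $\LL(-1)_1$ to $\LL(-1)_2$ and restricts to the identity on $\Gamma_\Pi$---with no condition on $\Pi$ at all---any lift $\wh{h}$ satisfies $\wh{h}X_i\wh{h}^{-1}=X_i$ for every $i$ with $\alpha_i\neq 0$, hence commutes with $\wh{g}_1$. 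Combining this with your own conjugation step ($\wh{h}\wh{G}_1\wh{h}^{-1}=\wh{G}_2$ by uniqueness, so $\wh{g}_2=\wh{h}\wh{g}_1\wh{h}^{-1}$) yields $\wh{g}_2=\wh{g}_1$ on the nose: both markings produce literally the same operator on $\VV$, and no cyclicity-of-trace argument, and no control of $\jo$ under $h$, is needed. With this observation inserted, your argument closes, and the only lattice-theoretic input remaining is the one both you and the paper take from \cite{GHV,2013arXiv1309.6528H}: an isometry between the two Leech copies restricting to the identity on the primitive sublattice $\Gamma_\Pi$.
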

\begin{proof}
Suppose that $\LL(-1)\subset\a$ is chosen as above, having full rank $\a=\LL(-1)\otimes_\ZZ\CC$ in $\a$, containing $\Gamma_\Pi$ as a primitive sublattice, and such that $\zz'\in\wh{G}$, for $\wh{G}$ the unique lift of $G:=\Aut(\LL(-1))\simeq \Co_0$ to $\Spin(\a)$, and $\zz'$ the lift of $-\Id_\a$ associated to the chosen orientation (\ref{eqn:tg-orientmlr}) on $\mlr$. A second choice of Leech marking leads to a second copy of the negative-definite Leech lattice, $\LL'(-1)\subset\a$, with $\a=\LL'(-1)\otimes_\ZZ\CC$ and $\Gamma_\Pi<\LL(-1)\cap\LL'(-1)$. Set $G':=\Aut(\LL'(-1))$ and write $\wh{G}'$ for the unique lift of $G'\simeq\Co_0$ to $\Spin(\a)$, and assume, as we may, that $\zz'\in \wh{G}'$.

We have $g\in G\cap G'$. Write $\wh{g}$ and $\wh{g}'$ for the respective lifts to $\Spin(\a)$, determined by $\wh{G}$ and $\wh{G}'$. We have $\wh{g}=\pm\wh{g}'$, and we require to show that, in fact, $\wh{g}=\wh{g}'$. 

Let $h$ be an orthogonal transformation of $\a$ that restricts to an isomorphism $h:\LL(-1)\xrightarrow{\sim}\LL'(-1)$. By our hypothesis that $\zz'\in\wh{G}\cap\wh{G}'$, we have $h\in \SO(\a)$. Since $\Gamma_\Pi$ is a primitive sublattice of $\LL(-1)\cap\LL'(-1)$ we may choose $h$ so that it restricts to the identity on $\Gamma_\Pi$. Then $h$ commutes with $g$, because $g$ acts trivially on $\Gamma_\Pi^\perp$. More than this, any lift $\wh{h}$ of $h$ to $\Spin(\a)$ commutes with $\wh{g}$, because we have $\wh{g}=\prod_{i=1}^{12}e^{\alpha_iX_i}$ (cf. (\ref{eqn:spin-whgexp})), for some basis $\{a_i^\pm\}$ of eigenvectors for $g$, as in (\ref{eqn:spin-lambdai}), with $X_i$ as in (\ref{eqn:spin-Xi}), and we may assume that $\alpha_i\neq 0$ only when $a_i^\pm\in \Gamma_\Pi\otimes_\ZZ\CC$. Then $\wh{h}X_i=X_i\wh{h}$ whenever $\alpha_i\neq 0$, and so $\wh{h}\wh{g}=\wh{g}\wh{h}$. Now $\wh{h}\wh{G}\wh{h}^{-1}$ is a lift of $G'$ to $\Spin(\a)$, so it must be $\wh{G}'$ by Proposition \ref{prop:spin-whG}. So $\wh{h}\wh{g}\wh{h}^{-1}$ is the lift $\wh{g}'$ of $hgh^{-1}=g$ to $\wh{G}'$, so $\wh{g}'=\wh{h}\wh{g}\wh{h}^{-1}=\wh{g}$, as we required to show.
\end{proof}

The coefficients of the $\phi_g$ may be computed explicitly, in direct analogy with (\ref{eqn:dist-Tsgtwdir}). With this purpose in mind we define constants $D_{{g}}$ as follows.

Given $g\in G_\Pi$, choose a polarization $\a=\a^-\oplus \a^+$ for $\a=\mlc$ such that $\a^{\pm}$ is spanned by (isotropic) eigenvectors $a^{\pm}_i$ for $g$, constituting a pair of dual bases in the sense that $\lab a^-_i,a^+_j\rab=\delta_{i,j}$. (Cf. the discussion in \S\ref{sec:dist}.) We may assume that 
\begin{gather}\label{eqn:tg-a11a12}
a_{11}^\pm =a_X^\pm,\quad 
a_{12}^\pm=a_Z^\pm,
\end{gather}
since $a_X^\pm$ and $a_Z^\pm$ are fixed by $g$, by hypothesis. We may also assume that the lift of $-\Id_\a$ associated to the polarization $\a=\a^-\oplus\a^+$coincides with $\zz'$ (cf. (\ref{eqn:tg-orientmlr})), for if not, then replace $a_i^\pm $ with $a_i^\mp$, for some $i\in\{1,\ldots,10\}$.

Write $\lambda^{\pm 1}_i$ for the eigenvalue of $g$ attached to $a^{\pm}_i$. Set $X_i:=\frac{\ii}{2}(a_i^-a_i^+-a_i^+a_i^-)$ as in (\ref{eqn:spin-Xi}). Then, according to the discussion in \S\ref{sec:spin}, we have
\begin{gather}
	\wh{g}=\prod_{i=1}^{10}e^{\alpha_iX_i},
\end{gather}
for some $\alpha_i\in 2\pi\QQ$ satisfying $\lambda_i^{\pm 1}=e^{\pm 2 \alpha_i\ii }$.
We now set $\nu_i:=e^{\alpha_i \ii}$, for $1\leq i\leq 10$, and define
\begin{gather}\label{eqn:tg:Dg}
	D_{g}:=\prod_{i=1}^{10}(\nu_i-\nu_i^{-1}).
\end{gather}

Observe that if $\nu':=\prod_{i=1}^{10}\nu_i$ then $D_{g}=\nu'\prod_{i=1}^{10}(1-\lambda_i^{-1})$. So $D_{g}$ vanishes if and only if $g$ has a fixed point in its action on $\Gamma_\Pi$. In particular, $D_{g}$ vanishes whenever the sublattice of $\LL$ fixed by $g$ has rank larger than $4$.

We are now prepared to present an explicit expression for $\phi_g$.

\begin{proposition}\label{proposition:phi_g}
Let $X$ be a projective complex K3 surface and let $\sigma\in \Stabo(X)$. Then for $g\in G_\Pi=\Aut_s(\Db(X),\sigma)$ and $\phi_g$ defined by (\ref{eqn:tg-phig}), we have
\begin{align}\label{equation:phi_g}
\begin{split}
\phi_g
= & -\frac{1}{2} \left(\frac{\vartheta_4(\tau,z)^2}{\vartheta_4(\tau,0)^2}\frac{\eta_g(\tau/2)}{\eta_g(\tau)} - \frac{\vartheta_3(\tau,z)^2}{\vartheta_3(\tau,0)^2}\frac{\eta_{-g}(\tau/2)}{\eta_{-g}(\tau)}\right) \\
& + \frac{1}{2}\left(
\frac{\vartheta_1(\tau,z)^2}{\eta(\tau)^6}D_{g}\eta_g(\tau)
- 
\frac{\vartheta_2(\tau,z)^2}{\vartheta_2(\tau,0)^2}C_{-g}\eta_{-g}(\tau) 
\right)
\end{split}
\end{align}
after substituting $q=e^{2\pi \ii \tau}$ and $y=e^{2\pi \ii z}$. In particular, $\phi_g$ is the Fourier expansion of a holomorphic function $\phi_g(\tau,z)$ on $\HH\times \CC$, invariant under $(\tau,z)\mapsto(\tau+m,z+n)$, for $m,n\in\ZZ$.
\end{proposition}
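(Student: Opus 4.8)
The plan is to compute the graded trace (\ref{eqn:tg-phig}) head-on, exploiting the decomposition $\VV=A(\a)^1\oplus A(\a)_\tw^0$ together with the free-fermionic description of each summand. Since $\zz$ acts as $-\Id$ on $A(\a)^1$ and as $+\Id$ on $A(\a)_\tw^0$ (cf. (\ref{eqn:spin-Atw01}), (\ref{eqn:spin-A01})), the sign in (\ref{eqn:tg-phig}) collapses and, writing $O:=\wh{g}\,y^{\Jo}q^{\Lo-c/24}$, one gets $\phi_g=\tr_{A(\a)^1}O-\tr_{A(\a)_\tw^0}O$. Because $\zz$ implements the fermion parity on each of $A(\a)$ and $A(\a)_\tw$ (the vacuum being even), the parity projectors $\tfrac12(1\mp\zz)$ let me rewrite this as a combination of four traces over the \emph{full} Fock spaces,
\[
\phi_g=\tfrac12\bigl(\tr_{A(\a)}O-\tr_{A(\a)}\zz O\bigr)-\tfrac12\bigl(\tr_{A(\a)_\tw}O+\tr_{A(\a)_\tw}\zz O\bigr).
\]

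First I would compute these four building blocks. Fix a $g$-eigenbasis $\{a_i^\pm\}$ as in the paragraph preceding (\ref{eqn:tg:Dg}), with $a_{11}^\pm=a_X^\pm$ and $a_{12}^\pm=a_Z^\pm$ the two fixed directions, carrying $\Jo$-charge $\pm1$ (Lemma \ref{lem:tg-JPirel}) and $g$-eigenvalue $1$. Each Fock space is an exterior algebra on its creation modes, so every trace factorizes over modes: a mode of energy $h$, $\wh{g}$-weight $w$ and $\Jo$-charge $c$ contributes a factor $(1\pm w\,y^{c}q^{h})$, with sign $+$ for the $\wh{g}$-traces and $-$ for the $\zz\wh{g}$-traces (the extra sign being fermion parity). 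In the untwisted sector the modes are $a_i^\pm(-n+\tfrac12)$, $n\geq1$, over the vacuum $q^{-1/2}\vv$; in the twisted sector the integer modes $a_i^\pm(-n)$, $n\geq1$, act on the Clifford module $\CM$ (cf. (\ref{eqn:va-AtwwedgeCM})), whose $\wh{g}\,y^{\Jo}$- and $\zz\wh{g}\,y^{\Jo}$-traces I would compute exactly as in (\ref{eqn:spin-trCMwhg}) but now retaining the $y$-grading of the two fixed zero modes. This is where the constants $C_{-g}$ (via $\mu_i=\ii\nu_i$, so that $C_{-g}=\prod_i(\nu_i+\nu_i^{-1})$) and $D_{g}$ (cf. (\ref{eqn:tg:Dg})) enter, the latter resolving the vanishing of $C_g$ caused by the fixed directions.

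Next I would match each block to the products (\ref{eqn:autfms-theta1prod}): the ten non-fixed directions assemble into $\eta_g(\tau/2)/\eta_g(\tau)$, $\eta_{-g}(\tau/2)/\eta_{-g}(\tau)$, $\eta_g(\tau)$ or $\eta_{-g}(\tau)$ (cf. (\ref{eqn:dist-etag}), (\ref{eqn:dist-fracetag})), while the two fixed directions assemble into $y$-dependent theta quotients, with the normalizations $\vartheta_i(\tau,0)^2$ (including the factor $4$ in $\vartheta_2(\tau,0)^2$) absorbing the leftover constants. I expect to obtain
\[
\tr_{A(\a)}O=\tfrac{\vartheta_3(\tau,z)^2}{\vartheta_3(\tau,0)^2}\tfrac{\eta_{-g}(\tau/2)}{\eta_{-g}(\tau)},\quad
\tr_{A(\a)}\zz O=\tfrac{\vartheta_4(\tau,z)^2}{\vartheta_4(\tau,0)^2}\tfrac{\eta_g(\tau/2)}{\eta_g(\tau)},\quad
\tr_{A(\a)_\tw}O=\tfrac{\vartheta_2(\tau,z)^2}{\vartheta_2(\tau,0)^2}C_{-g}\eta_{-g}(\tau),\quad
\tr_{A(\a)_\tw}\zz O=-\tfrac{\vartheta_1(\tau,z)^2}{\eta(\tau)^6}D_{g}\eta_g(\tau),
\]
and substituting these into the displayed reduction yields (\ref{equation:phi_g}) directly. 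For the final assertion: each term on the right of (\ref{equation:phi_g}) is holomorphic on $\HH\times\CC$, since the $\vartheta_i$ are entire in $z$, the denominators $\vartheta_i(\tau,0)$ ($i=2,3,4$) and $\eta(\tau)$ are non-vanishing on $\HH$, and the eta-quotients are holomorphic on $\HH$. Moreover $\Lo-\tfrac{\bf c}{24}$ is $\ZZ$-valued on $\VV$—on $A(\a)^1$ an odd number of half-integer energies sums to a half-integer, which the shift $-\tfrac{\bf c}{24}=-\tfrac12$ turns into an integer, and on $A(\a)_\tw^0$ the grading is integral by (\ref{eqn:cliff-Atwgrading})—while $\Jo$ is $\ZZ$-valued by Lemma \ref{lem:tg-JPirel}. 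Hence $\phi_g$ is a power series in integer powers of $q$ and $y$, so the holomorphic function it represents is invariant under $(\tau,z)\mapsto(\tau+m,z+n)$ for $m,n\in\ZZ$.

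The main obstacle I anticipate is sign and normalization bookkeeping rather than anything conceptual: fixing the fermion-parity conventions for $\zz$ consistently on $A(\a)$ and $A(\a)_\tw$ so that the projectors cut out exactly the sectors $A(\a)^1$ and $A(\a)_\tw^0$; tracking the twisted-sector ground-state data, in particular verifying that the vacuum $\Jo$-charge and the $\Lo-\tfrac{\bf c}{24}$ eigenvalues are integral (which is precisely what forces integer powers of $y$ and $q$); and passing correctly from the $g$-eigendata to the $-g$ quantities $\eta_{-g}$, $C_{-g}$, where the relation $\mu_i=\ii\nu_i$ and the compensating factor of $4$ between the $\CM$-trace and $\vartheta_2(\tau,0)^2$ must be handled with care.
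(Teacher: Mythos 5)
Your proposal is correct and follows essentially the same route as the paper's proof: both decompose $\VV=A(\a)^1\oplus A(\a)_\tw^0$, reduce $\phi_g$ to the four traces of $\wh{g}\,y^{\Jo}q^{\Lo-c/24}$ and $\zz\wh{g}\,y^{\Jo}q^{\Lo-c/24}$ over the full spaces $A(\a)$ and $A(\a)_\tw$, evaluate these by mode-by-mode factorization using Lemma \ref{lem:tg-JPirel}, the constants $C_{-g}$, $D_g$, and the product formulas (\ref{eqn:autfms-theta1prod}), and your four intermediate trace identities agree exactly with those computed in the paper. The only difference is presentational: you make the parity projectors $\tfrac12(1\mp\zz)$ explicit, which the paper leaves implicit.
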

\begin{proof}
The required identity (\ref{equation:phi_g}) may be obtained via direct calculation. We use the decomposition $\vsn=A(\a)^1\oplus A(\a)^0_\tw$ along with the formulas (\ref{eqn:dist-etag}) and (\ref{eqn:dist-Cg}). We also use Lemma \ref{lem:tg-JPirel}, and the product formulas (\ref{eqn:autfms-theta1prod}) for the Jacobi theta functions 
$\vartheta_i$.

For the contribution of $A(\a)^1$ to $\phi_g$ note that
\begin{gather}
\begin{split}
&\tr_{A(\a)}\wh{g}y^{J(0)}q^{L(0)-c/24}\\
&=
q^{-1/2}\prod_{n>0}(1+y^{-1}q^{n-1/2})^2(1+yq^{n-1/2})^2\prod_{i=1}^{10}(1+\lambda_i^{-1}q^{n-1/2})(1+\lambda_iq^{n-1/2})\\
&=\frac{\vartheta_3(\tau,z)^2}{\vartheta_3(\tau,0)^2}\frac{\eta_{-g}(\tau/2)}{\eta_{-g}(\tau)}
\end{split}
\end{gather}
since $\lambda_{11}=\lambda_{12}=1$ according to the convention (\ref{eqn:tg-a11a12}). Similarly, 
\begin{gather}
\tr_{A(\a)}\zz\wh{g}y^{J(0)}q^{L(0)-c/24}
=\frac{\vartheta_4(\tau,z)^2}{\vartheta_4(\tau,0)^2}\frac{\eta_{-g}(\tau/2)}{\eta_{-g}(\tau)}.
\end{gather}
Thus, recalling the definition (\ref{eqn:tg-phig}) of $\phi_g$, we see that the first line of the right hand side of (\ref{equation:phi_g}) is precisely the contribution of $A(\a)^1$ to $\phi_g$.

The contribution of $A(\a)^0_\tw$ is computed similarly. We have
\begin{gather}
\begin{split}
&\tr_{A(\a)}\wh{g}y^{J(0)}q^{L(0)-c/24}\\
&=
qy\nu\prod_{n>0}(1+y^{-1}q^{n-1})^2(1+yq^{n})^2\prod_{i=1}^{10}(1+\lambda_i^{-1}q^{n-1})(1+\lambda_iq^{n})\\
&=\frac{\vartheta_2(\tau,z)^2}{\vartheta_2(\tau,0)^2}C_{-g}{\eta_{-g}(\tau)},
\end{split}\\
\begin{split}
&\tr_{A(\a)}\zz\wh{g}y^{J(0)}q^{L(0)-c/24}\\
&=
qy\nu\prod_{n>0}(1-y^{-1}q^{n-1})^2(1-yq^{n})^2\prod_{i=1}^{10}(1-\lambda_i^{-1}q^{n-1})(1-\lambda_iq^{n})\\
&=-\frac{\vartheta_1(\tau,z)^2}{\eta(\tau)^6}D_{g}{\eta_{g}(\tau)},
\end{split}
\end{gather}
assuming, as we may, that $\nu'=\nu$ in (\ref{eqn:tg:Dg}). This shows that the second line of the right hand side of (\ref{equation:phi_g}) represents the contribution of $A(\a)^0_\tw$ to $\phi_g$.
The identity is proved.
\end{proof}

Armed with Proposition \ref{proposition:phi_g}, we henceforth regard $\phi_g=\phi_g(\tau,z)$ as a holomorphic function on $\HH\times \CC$. We would like to show that $\phi_g$ is a weak Jacobi form. This is accomplished by giving an expression in terms of the standard weak Jacobi forms $\phi_{0,1}$ and $\phi_{-2,1}$ (cf. \S\ref{sec:autfms}). With this in mind, define 
\begin{gather}\label{equation:F_g}
	\begin{split}
F_g(\tau):= & 
\frac{1}{2}\Lambda_2(\tau/2)\frac{\eta_{g}(\tau/2)}{\eta_g(\tau)} 
-\frac{1}{2}\Lambda_2(\tau/2+1/2)\frac{\eta_{-g}(\tau/2)}{\eta_{-g}(\tau)}
\\
& 
+\frac{1}{2}D_{g}\eta_{g}(\tau)
- \Lambda_2(\tau)C_{-g}\eta_{-g}(\tau)
	\end{split}
\end{gather}
for $g$ as in Proposition \ref{proposition:phi_g}, and recall the definition (\ref{eqn:dist-chig}) of $\chi_g$.

\begin{proposition}\label{proposition:phi_g and F_g}
Let $X$ be a projective complex K3 surface and let $\sigma\in \Stabo(X)$. Then for $g\in \Aut_s(\Db(X),\sigma)$ we have
\be\label{equation:phi_g and F_g}
\phi_g(\tau,z)=\frac1{12}
	\chi_g\phi_{0,1}(\tau,z)+F_g(\tau)\phi_{-2,1}(\tau,z).
\ee
\end{proposition}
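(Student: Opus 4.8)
The plan is to prove the identity by brute substitution followed by bookkeeping, using the fact (Proposition \ref{prop:autfms-phiCFs}) that a weight-$0$ index-$1$ weak Jacobi form is determined by the coefficients of $\phi_{0,1}$ and $\phi_{-2,1}$ in its canonical expansion. Starting from the explicit formula (\ref{equation:phi_g}) for $\phi_g$, I would replace each of the theta quotients $\vartheta_4^2/\vartheta_4(\tau,0)^2$, $\vartheta_3^2/\vartheta_3(\tau,0)^2$, and $\vartheta_2^2/\vartheta_2(\tau,0)^2$ by the right-hand sides of the identities (\ref{equation:theta4 identity}), (\ref{equation:theta3 identity}), and (\ref{equation:theta2 identity}) of Lemma \ref{lemma:theta function identities}, and substitute $\vartheta_1^2/\eta^6=-\phi_{-2,1}$ from (\ref{equation:theta1 identity}). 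This turns $\phi_g$ into an explicit $\CC[\eta_g,\eta_{-g},\dots]$-linear combination of $\phi_{0,1}$ and $\phi_{-2,1}$, and it then remains only to match the two coefficients against the claimed values $\tfrac1{12}\chi_g$ and $F_g$.

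Collecting the coefficient of $\phi_{-2,1}$ is the easy half: each theta quotient contributes a single $\Lambda_2$-multiple of $\phi_{-2,1}$, and reading these off term by term reproduces exactly the four summands in the definition (\ref{equation:F_g}) of $F_g$ — the $\vartheta_4$ and $\vartheta_3$ terms give the two $\Lambda_2(\tau/2)$ and $\Lambda_2(\tau/2+1/2)$ contributions, the $\vartheta_2$ term gives $-\Lambda_2(\tau)C_{-g}\eta_{-g}(\tau)$, and the $\vartheta_1$ term gives the $D_g\eta_g(\tau)$ contribution. So the coefficient of $\phi_{-2,1}$ is $F_g(\tau)$ by inspection, with no further input required.

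The coefficient of $\phi_{0,1}$ is where the real content lies, and is the step I expect to be the main obstacle. Here each of the three quotients coming from $\vartheta_2,\vartheta_3,\vartheta_4$ contributes its constant $\tfrac1{12}$ term, so after tracking signs the coefficient of $\phi_{0,1}$ is $-\tfrac1{24}\bigl(\eta_g(\tau/2)/\eta_g(\tau)-\eta_{-g}(\tau/2)/\eta_{-g}(\tau)+C_{-g}\eta_{-g}(\tau)\bigr)$. The key observation is that $C_g=0$ for every $g\in G_\Pi$: since $g$ fixes the positive-definite $4$-space $\Pi=P_X\oplus P_Z$ pointwise, in the notation of (\ref{eqn:tg-a11a12}) we have $\lambda_{11}=\lambda_{12}=1$, so two of the factors $\nu_i-\nu_i^{-1}$ in the product formula (\ref{eqn:spin-Cgnu}) for $C_g$ vanish. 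With $C_g=0$, the bracketed expression is precisely the combination governed by Lemma \ref{lemma:eta identity}, which then reads $\eta_g(\tau/2)/\eta_g(\tau)-\eta_{-g}(\tau/2)/\eta_{-g}(\tau)+C_{-g}\eta_{-g}(\tau)=-2\chi_g$; substituting this collapses the $\phi_{0,1}$ coefficient to $\tfrac1{12}\chi_g$.

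Assembling the two coefficients yields (\ref{equation:phi_g and F_g}). The only non-mechanical ingredients are recognizing the collected eta-quotients as the left-hand side of the technical identity Lemma \ref{lemma:eta identity}, and noting that the hypothesis $g\in G_\Pi$ forces $C_g=0$ so that the identity applies in exactly the simplified form needed; everything else is sign bookkeeping together with the uniqueness of the $(\phi_{0,1},\phi_{-2,1})$-expansion from Proposition \ref{prop:autfms-phiCFs}.
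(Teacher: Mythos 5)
Your proof is correct and is essentially the paper's own argument run in reverse: the paper starts from $\tfrac{1}{12}\chi_g\phi_{0,1}+F_g\phi_{-2,1}$, subtracts $\tfrac{1}{24}\cdot 0\cdot\phi_{0,1}$ with $0$ written as the left-hand side of Lemma \ref{lemma:eta identity}, and then uses Lemma \ref{lemma:theta function identities} to recognize the result as (\ref{equation:phi_g}) --- which is precisely your substitute-and-collect computation read backwards, so no uniqueness statement from Proposition \ref{prop:autfms-phiCFs} is actually needed. One point in your favor: you make explicit that $C_g=0$ because $g$ fixes $\Pi$ pointwise, a fact the paper's proof of this proposition uses only tacitly (the $C_g\eta_g$ term coming from the eta identity is silently absent from its rearranged expression).
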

\begin{proof}
Replacing $F_g(\tau)$ with the right hand side of (\ref{equation:F_g}), the right hand side of (\ref{equation:phi_g and F_g}) becomes
\begin{gather}
\begin{split}
\frac1{12}
\chi_g\phi_{0,1}(\tau,z)&
+\frac{1}{2}\Lambda_2(\tau/2)\frac{\eta_{g}(\tau/2)}{\eta_g(\tau)}\phi_{-2,1}(\tau,z) 
-\frac{1}{2}\Lambda_2(\tau/2+1/2)\frac{\eta_{-g}(\tau/2)}{\eta_{-g}(\tau)}\phi_{-2,1}(\tau,z)
\\
& 
+\frac{1}{2}D_{g}\eta_{g}(\tau)\phi_{-2,1}(\tau,z)
-\Lambda_2(\tau)C_{-g}\eta_{-g}(\tau)\phi_{-2,1}(\tau,z)
.
\end{split}
\end{gather}
Now subtract $\frac{1}{24}0\phi_{0,1}$, where $0$ is written as in Lemma \ref{lemma:eta identity} (i.e. the left hand side of (\ref{equation:eta identity})). After some rearrangement we obtain
\begin{gather}
	\begin{split}
&
-\left(\frac{1}{24}\phi_{0,1}(\tau,z)-\frac{1}{2}\Lambda_2(\tau/2)\phi_{-2,1}(\tau,z)\right)\frac{\eta_{g}(\tau/2)}{\eta_g(\tau)}
	\\
&
+\left(\frac{1}{24}\phi_{0,1}(\tau,z)-\frac{1}{2}\Lambda_2(\tau/2+1/2)\phi_{-2,1}(\tau,z)\right)\frac{\eta_{-g}(\tau/2)}{\eta_{-g}(\tau)}
	\\
&
+\frac{1}{2}\phi_{-2,1}(\tau,z)D_{g}\eta_{g}(\tau)
-\left(\frac{1}{24}\phi_{0,1}(\tau,z)+\Lambda_2(\tau)\phi_{-2,1}(\tau,z)\right)C_{-g}\eta_{-g}(\tau)
,
	\end{split}
\end{gather}
and the identities of Lemma \ref{lemma:theta function identities} show that this is exactly (\ref{equation:phi_g}).
\end{proof}

Applying Proposition \ref{prop:autfms-phiCFs} with $m=1$ to (\ref{equation:phi_g and F_g}) we see that $\phi_g$ is a weak Jacobi form of level $N$ so long as $F_g$ is a modular form of weight 2 for $\Gamma_0(N)$.

\begin{proposition}\label{proposition:F_g modular}
Let $X$ be a projective complex K3 surface and let $\sigma\in \Stabo(X)$. Then for $g\in \Aut_s(\Db(X),\sigma)$ the function $F_g$ is a modular form of weight $2$ for $\Gamma_0(N_g)$, for some positive integer $N_g$.
\end{proposition}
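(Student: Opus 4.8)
The plan is to write $F_g$ as the sum of the four pieces in (\ref{equation:F_g}), to identify each with a function of already-understood modular behaviour, and then to confront the one genuinely delicate point—holomorphy at the cusps—at the end.

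First I would record the dichotomy that makes the ``weight-zero'' factors of (\ref{equation:F_g}) behave correctly. Since $-g=(-\Id_\a)g$ has eigenvalues $-\lambda_i$ and $\lambda_{11}=\lambda_{12}=1$ (by the convention (\ref{eqn:tg-a11a12})), the transformation $-g$ fixes no non-zero vector of $\a$; hence $C_{-g}\neq0$, and because $\tfrac12\sum_m k_m$ is the weight of $\eta_{\pm g}$ while $\sum_m k_m$ equals the dimension of the fixed space (as one reads off the Frame shape $\prod_m m^{k_m}$, cf. (\ref{eqn:spin-pig})), the eta product $\eta_{-g}(\tau)=\prod_m\eta(m\tau)^{k_m}$ has weight $0$. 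Dually, $D_g\neq0$ forces $g$ to fix exactly the four-dimensional space $\Gamma_\Pi^\perp\otimes_\ZZ\CC$, so $\sum_m k_m=4$ and $\eta_g$ is a holomorphic eta quotient of weight $2$ (when $D_g=0$ the third term of (\ref{equation:F_g}) is simply absent). Using the identities (\ref{eqn:dist-Tsgchig}) and (\ref{eqn:dist-Tsgtwchig}) with $\chi_{-g}=-\chi_g$, I would then rewrite the three weight-zero factors as
\[
\frac{\eta_g(\tau/2)}{\eta_g(\tau)}=T^s_g(\tau)-\chi_g,\qquad
\frac{\eta_{-g}(\tau/2)}{\eta_{-g}(\tau)}=T^s_{-g}(\tau)+\chi_g,\qquad
C_{-g}\eta_{-g}(\tau)=T^s_{-g,\tw}(\tau)-\chi_g,
\]
so that by Theorem \ref{thm:dist-Tsgpm} (and the explicit groups tabulated in \cite{vacogm}) each is a weakly holomorphic weight-zero modular function for some $\Gamma_0(M)$.

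With these identifications the transformation law is immediate. The forms $\Lambda_2(\tau)$, $\Lambda_2(\tau/2)$ and $\Lambda_2(\tau/2+1/2)$ are of weight $2$ for $\Gamma_0(4)$ (cf. (\ref{eqn:autfms-LambdaN}) and the identities displayed after it), each multiplied by a weight-zero modular function, while the remaining term $\tfrac12 D_g\eta_g$ is itself a weight-two eta quotient. Taking $N_g$ to be a common multiple of the levels involved—enlarged if necessary to clear the multiplier systems of the eta quotients—every term satisfies the weight-two transformation law for $\Gamma_0(N_g)$, and $F_g$ is manifestly holomorphic on $\HH$, since each constituent eta quotient is non-vanishing there and each $\Lambda_2$ is holomorphic.

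The main obstacle is holomorphy at the cusps: a product of a $\Lambda_2$-factor with a principal modulus is only weakly holomorphic, so holomorphy of $F_g$ must come from cancellation of polar parts. At the infinite cusp this is already visible: by (\ref{eqn:dist-fracetag}) both $\eta_{\pm g}(\tau/2)/\eta_{\pm g}(\tau)$ expand as $q^{-1/2}+O(q^{1/2})$, while $\Lambda_2(\tau/2)=\tfrac1{12}+O(q^{1/2})$ and $\Lambda_2(\tau/2+1/2)=\tfrac1{12}+O(q^{1/2})$, so the leading $\tfrac1{24}q^{-1/2}$ contributions of the first two terms cancel and the last two are $O(q)$, giving $F_g=O(1)$ as $\Im\tau\to\infty$. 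I would then repeat this at the remaining cusps of $\Gamma_0(N_g)$ by applying coset representatives $\sigma\in\SL_2(\ZZ)$: each principal modulus $T^s_{\pm g}$, $T^s_{-g,\tw}$ has at worst a simple pole, occurring only at the cusps lying over the puncture of its own genus-zero quotient and finite elsewhere; the $\Lambda_2$-variants are holomorphic everywhere (and vanish at the relevant cusps); and the weight-two eta quotient $\eta_g$ is holomorphic at every cusp by Ligozat's criterion applied to its balanced Frame shape. Verifying that the surviving polar contributions cancel in each local expansion—reproducing the leading-order cancellation above at every cusp—completes the proof that $F_g\in M_2(\Gamma_0(N_g))$. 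I expect this cusp-by-cusp bookkeeping, and in particular the matching of $\Lambda_2(\tau/2)$ and $\Lambda_2(\tau/2+1/2)$ against the pole locations of the twisted principal modulus $T^s_{-g,\tw}$, to be the technically heaviest step.
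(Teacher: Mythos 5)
The step that fails is your assertion that, after the rewritings, ``the transformation law is immediate'' because every term of (\ref{equation:F_g}) separately satisfies the weight-two transformation law for $\Gamma_0(N_g)$. The first two summands are not modular forms for any $\Gamma_0(N)$ individually. Every $\Gamma_0(N)$ contains $T=\left(\begin{smallmatrix}1&1\\0&1\end{smallmatrix}\right)$, and under $\tau\mapsto\tau+1$ the first summand $\tfrac12\Lambda_2(\tau/2)\,t_g(\tau/2)$ (where $t_g(\tau/2)=\eta_g(\tau/2)/\eta_g(\tau)$) is carried onto the second summand $-\tfrac12\Lambda_2(\tau/2+1/2)\,t_{-g}(\tau/2)$, by the identity $t_g(\tau+1/2)=-t_{-g}(\tau)$; equivalently, each of these two summands has non-vanishing half-integral powers of $q$ in its expansion (the leading one being the $\tfrac1{24}q^{-1/2}$ you yourself exhibit), so neither can be invariant under $\tau\mapsto\tau+1$. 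The same point undercuts your appeal to Theorem \ref{thm:dist-Tsgpm}: it is $T^s_g(2\tau)=t_g(\tau)+\chi_g$ that is $\Gamma_0$-type modular, whereas $F_g$ involves $t_g$ at half that argument, and this rescaling is exactly what destroys $T$-invariance. Only the \emph{sum} of the first two terms is $T$-invariant, and proving that this sum is modular for a group of $\Gamma_0$-type is the non-trivial content missing from your proposal. The paper supplies it by subtracting $\tfrac12D_g\eta_g$ and $2\chi_g\Lambda_2$, using Lemma \ref{lemma:eta identity} (with $C_g=0$) to eliminate $C_{-g}\eta_{-g}$ in favour of $t_{\pm g}$, and recognizing the remainder as $T_2(2)f_g$ with $f_g=\tfrac12t_g\Lambda_4$, the image of a single weakly holomorphic weight-two form under the order-two Hecke operator (\ref{eqn:tg-Tk2}), which preserves weak $\Gamma_0(N)$-modularity. (A variant closer to your decomposition: the first two summands together equal $T_2(2)\bigl[\Lambda_2t_g\bigr]$, and the fourth summand can be kept separate; either way some such symmetrization is indispensable.) Your cusp-by-cusp analysis, which is otherwise close in outline to the paper's case check against the groups $\Gamma_{\pm g}$, cannot even be formulated until this is repaired, since without modularity the ``cusps of $\Gamma_0(N_g)$'' at which you expand $F_g$ have no invariant meaning.

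A second, smaller error is the opening claim that $-g$ fixes no non-zero vector of $\a$, hence $C_{-g}\neq0$ and $\eta_{-g}$ has weight $0$. The eigenvalues of $-g$ are $-\lambda_i^{\pm1}$, so $-g$ has fixed vectors precisely when $g$ has eigenvalue $-1$, which happens for most relevant classes: for $g$ in class $2B$ one has $\pi_g=1^82^8$, $\pi_{-g}=2^{16}/1^8$, so $-g$ fixes an $8$-dimensional space, $C_{-g}=0$, and $\eta_{-g}$ has weight $4$ (cf.\ Table \ref{table:4space}). What you need is the dichotomy: either $C_{-g}=0$ and the fourth term is absent, or $C_{-g}\neq0$, in which case $-g$ is fixed-point free and $\eta_{-g}$ does have weight $0$. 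Your identity $C_{-g}\eta_{-g}(\tau)=T^s_{-g,\tw}(\tau)-\chi_g$ holds in both cases, so this slip is repairable; it is the gap in the previous paragraph that sinks the proof as written.
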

\begin{proof}
As in the definition (\ref{eqn:tg-phig}) of $\phi_g$, and the proof of Proposition \ref{prop:tg-phigindep}, we use Proposition \ref{prop:de:aetohi} to identify $G_\Pi=\Aut_s(\Db(X),\sigma)$ with a subgroup of $\SO(\a)$ (recall that $\a=\mlc$), and we choose a Leech marking $\iota$ for $(X,\sigma)$, in order to identify $g$ as an element of $G=\Aut(\LL(-1))$, for a suitable copy of $\LL(-1)$ in $\a$. Then $\eta_{\pm g}(\tau)$ and $C_{-g}$ depend only on the conjugacy class of $g$ in $G\simeq \Co_0$, and $D_g$ is determined by the conjugacy class $[g]\subset G$ up to sign (cf. (\ref{eqn:tg:Dg})). The values $C_{-g}$ and $D_{g}$, and the Frame shapes $\pi_{\pm g}$ that determine the $\eta_{\pm g}(\tau)$ (cf. (\ref{eqn:spin-pig})) may be read off from Table \ref{table:4space}. 

Now the proof is essentially a case by case check of the relevant classes of $\Co_0$ (rather than, say, all the groups $G_{\Pi}$ in $\Aut(\mlz)=O(\II_{4,20})$), but we can use the results of \cite{vacogm} to simplify this further, replacing the explicit calculation of modular forms with simple checks on properties of the invariance groups $\Gamma_g$ of the functions $T^s_g(2\tau)$ (cf. (\ref{eqn:dist-Tsg}), Theorem \ref{thm:dist-Tsgpm}).

As a first step towards this goal, observe that if $D_{g}\neq 0$ then $\eta_g(\tau)$ is an eta product of weight $2$, meaning that $\sum_{m>0} k_m=4$ for $\pi_g=\prod_{m>0}m^{k_m}$. Indeed, $\sum_{m>0}k_m$ is exactly the rank of $\Lambda^g$, and it was pointed out in the sentence following (\ref{eqn:tg:Dg}) that $D_g$ vanishes when $\Lambda^g$ has rank larger than $4$.

It follows that the third summand in the definition (\ref{equation:F_g}) of $F_g(\tau)$ is a modular form of weight $2$, and some level, for each $g$. So we may consider $F'_g(\tau):=F_g(\tau)-\frac{1}{2}D_{g}\eta_g(\tau)$. (The prime here does not denote differentiation.) 

Next we apply Lemma \ref{lemma:eta identity} to rewrite $C_{-g}\eta_{-g}(\tau)$ in terms of the functions 
\begin{gather}\label{eqn:tg-tpmg}
	t_{\pm g}(\tau):=\frac{\eta_{\pm g}(\tau)}{\eta_{\pm g}(2\tau)}.
\end{gather}
(Cf. (\ref{eqn:dist-Tsgchig}).) Since $g$ has fixed points in $\a$ by hypothesis, $C_g=0$ (cf. (\ref{eqn:spin-Cgnu})). Thus we obtain
\begin{gather}\label{eqn:tg-etaidCgzero}
2\chi_g
+t_g\left(\frac{\tau}{2}\right)
-t_{-g}\left(\frac{\tau}{2}\right)
+C_{{-g}}\eta_{-g}(\tau)
=0
\end{gather}
from Lemma \ref{lemma:eta identity}. 
Solving for $C_{-g}\eta_{-g}(\tau)$ in (\ref{eqn:tg-etaidCgzero}), substituting the result into (\ref{equation:F_g}), and noting the identities $t_g(\tau+1/2)=-t_{-g}(\tau)$ and $\Lambda_4(\tau)=4\Lambda_2(2\tau)+2\Lambda_2(\tau)$, we see that if $F''_g(\tau):=F'_g(\tau)-2\chi_g\Lambda_2(\tau)$, then
\begin{gather}\label{eqn:tg-Fgpp}
	F''_g(\tau)=
	\frac14\left(
	t_{g}\left(\frac{\tau}{2}\right)
	\Lambda_4\left(\frac{\tau}2\right)
	+
	t_{g}\left(\frac{\tau+1}{2}\right)
	\Lambda_4\left(\frac{\tau+1}2\right)\right).
\end{gather}
The function $2\chi_g\Lambda_2(\tau)$ is a modular form for $\Gamma_0(M)$ whenever $M$ is even, so we may focus on $F_g''$. 

Set $f_g(\tau):=\frac12 t_g(\tau)\Lambda_4(\tau)$. Then $f_g(\tau)$ is a weakly holomorphic modular form of weight $2$ and some level, since $t_g(\tau)$ is a principal modulus for a genus zero group containing some $\Gamma_0(N)$, according to (\ref{eqn:dist-Tsgchig})  and Theorem \ref{thm:dist-Tsgpm}. Precisely, for $\Gamma_g$ the invariance group of $t_g(\tau)$ (i.e., as in Theorem \ref{thm:dist-Tsgpm}), the function $f_g$ is a weakly holomorphic modular form of weight $2$ for $\Gamma_g\cap \Gamma_0(4)$. 

The relevance of $f_g$, apparent from (\ref{eqn:tg-Fgpp}), is that we have $F_g''=T_2(2)f_g$, where $T_k(2)$ denotes the second order Hecke operator on modular forms of weight $k$ for $\Gamma_0(M)$, for any even $M$,
\begin{gather}\label{eqn:tg-Tk2}
(T_k(2)f)(\tau):=\frac{1}{2}\left(f\left(\frac{\tau}{2}\right)+f\left(\frac{\tau+1}{2}\right)\right).
\end{gather}
(Cf. e.g. \S IX.6 of \cite{MR1193029}.)
From this we conclude that $F''_g$ is a weakly holomorphic modular form of weight $2$ for $\Gamma_0(N')$, where $N'$ is the least common multiple of $4$ and the level of $t_g$. 

We require to show that $F''_g$ is actually a (holomorphic) modular form, i.e. has no poles at cusps. There is no loss in considering
\begin{gather}\label{eqn:tg-4Fgpp2}
	4F''_g(2\tau)=
	t_{g}(\tau)
	\Lambda_4({\tau})
	-
	t_{-g}({\tau})
	\Lambda_4\left(\tau+\frac{1}2\right)
\end{gather}
instead. Note that $\Lambda_4(\tau)$ and $\Lambda_4(\tau+1/2)$ are both modular forms of weight $2$ for $\Gamma_0(4)$, since $\Gamma_0(4)$ is normalized by $\left(\begin{smallmatrix}1&1/2\\0&1\end{smallmatrix}\right)$. Recall that $\Gamma_0(4)$ has three orbits on $\widehat{\QQ}=\QQ\cup\{\infty\}$, represented by $1$, $1/2$ and $1/4$. (The infinite cusp is represented by $1/4$.) Table \ref{tab:Lambda4cusps} presents the asymptotic behavior of $\Lambda_4(\tau)$ and $\Lambda_4(\tau+1/2)$ at these three cusps of $\Gamma_0(4)$.
\begin{table}[ht]
\centering
\caption{Modular forms of level four at cusps}
\begin{tabular}{ccc}\label{tab:Lambda4cusps}
&$\Lambda_4(\tau)$&$\Lambda_4(\tau+1/2)$\\
\hline
$1$&$1/2+O(q)$&$1/2+O(q)$\\
$1/2$&$O(q)$&$-1/2+O(q)$\\
$1/4$&$-1/8+O(q^{1/4})$&$O(q^{1/4})$
\end{tabular}
\end{table}

The function $F''_g$ cannot have a pole at $\alpha\in \widehat{\QQ}$ unless one or both of $t_{\pm g}$ do. The functions $t_{\pm g}$ are principal moduli for groups $\Gamma_{\pm g}$ according to Theorem \ref{thm:dist-Tsgpm}, so they can only have poles at points $\alpha\in\widehat{\QQ}$ such that $\alpha\in\Gamma_{\pm g}\infty$. Comparing (\ref{eqn:tg-4Fgpp2}) with Table \ref{tab:Lambda4cusps} we see that our task has been reduced to verifying, for arbitrary $\alpha\in\widehat{\QQ}$, that
\begin{enumerate}
\item
if $\alpha\in\Gamma_g\infty$ and $\Gamma_{-g}\infty$ then $\alpha=1/4\mod \Gamma_0(4)$,
\item
if $\alpha\in \Gamma_g\infty$ and $\alpha\notin\Gamma_{-g}\infty$ then $\alpha=1/2\mod \Gamma_0(4)$, and
\item
if $\alpha\notin \Gamma_{g}\infty$ and $\alpha\in\Gamma_{-g}\infty$ then $\alpha=1\mod \Gamma_0(4)$.
\end{enumerate}

The verification of these statements can now be handled directly using the descriptions of the groups $\Gamma_{\pm g}$ appearing in \S\ref{sec:tables-comps}. 

Observe that the verification of the statements 1, 2 and 3 is generally quite easy. For example, if $C_{-g}=0$ (which is the case for most conjugacy classes) then we necessarily have $\Gamma_g=\Gamma_{-g}$, since (\ref{eqn:tg-etaidCgzero}) implies that $t_g$ and $t_{-g}$ coincide, up to an additive constant. Then the conditions 2 and 3 become vacuous, and we require only to check that if $\gamma\in \Gamma_g$ and $\gamma\infty=\frac ac$ for $a,c\in\ZZ$ with $(a,c)=1$, then $c=0\mod 4$. The remaining cases are handled similarly.
\end{proof}

Together, Propositions \ref{proposition:phi_g and F_g} and \ref{proposition:F_g modular} prove our main theorem.

\begin{theorem}\label{theorem:jacobi form}
Let $X$ be a projective complex K3 surface and let $\sigma\in \Stabo(X)$. Then for $g\in \Aut_s(\Db(X),\sigma)$ the function $\phi_g$ is a Jacobi form of weight $0$, index $1$, and some level.
\end{theorem}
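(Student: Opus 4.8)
The plan is to obtain the theorem as a formal synthesis of the structural results already in place, namely Propositions \ref{proposition:phi_g and F_g} and \ref{proposition:F_g modular}, together with the classification of weight-zero weak Jacobi forms recorded in Proposition \ref{prop:autfms-phiCFs}. First I would recall that Proposition \ref{proposition:phi_g} has already promoted the formal series $\phi_g\in\CC[y^{\pm1}][[q]]$ to a genuine holomorphic function on $\HH\times\CC$ that is invariant under $(\tau,z)\mapsto(\tau+m,z+n)$ for $m,n\in\ZZ$. This is the hypothesis that makes it meaningful to speak of $\phi_g$ as a Jacobi form at all, and it supplies the integral periodicity underlying the elliptic transformation law.

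Next I would invoke the explicit decomposition of Proposition \ref{proposition:phi_g and F_g},
\[
\phi_g(\tau,z)=\tfrac1{12}\chi_g\,\phi_{0,1}(\tau,z)+F_g(\tau)\,\phi_{-2,1}(\tau,z),
\]
which exhibits $\phi_g$ in exactly the normal form appearing in Proposition \ref{prop:autfms-phiCFs} specialized to index $m=1$: one sets the constant $C:=\tfrac1{12}\chi_g\in\CC$ and takes the single weight-two coefficient to be $F_2:=F_g$. By Proposition \ref{proposition:F_g modular} the function $F_g$ is a modular form of weight $2$ for $\Gamma_0(N_g)$ for some positive integer $N_g$, so $F_g\in M_2(\Gamma_0(N_g))$, which is precisely the input required by Proposition \ref{prop:autfms-phiCFs}. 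Applying that proposition with $m=1$ and $\Gamma=\Gamma_0(N_g)$ then yields that $\phi_g$ is a weak Jacobi form of weight $0$ and index $1$ for $\Gamma_0(N_g)$, that is, of level $N_g$ in the sense fixed in \S\ref{sec:autfms}. This is exactly the assertion of the theorem.

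Since this deduction is purely formal, no genuine difficulty remains at the level of the theorem itself; the substantive work has already been discharged in the earlier propositions. The real obstacle is internal to Proposition \ref{proposition:F_g modular}, namely verifying that $F_g$ is not merely weakly holomorphic but holomorphic at every cusp. That step reduces, via the Hecke-operator identity $F_g''=T_2(2)f_g$ and the cusp asymptotics for $\Lambda_4(\tau)$ and $\Lambda_4(\tau+1/2)$ tabulated in Table \ref{tab:Lambda4cusps}, to the combinatorial statements comparing the $\Gamma_{\pm g}\infty$-orbits of the principal moduli $t_{\pm g}$ against the three cusps of $\Gamma_0(4)$; these are then checked class by class from the explicit invariance groups $\Gamma_{\pm g}$ of \S\ref{sec:tables-comps}, with the simplification that $C_{-g}=0$ (hence $\Gamma_g=\Gamma_{-g}$) in most cases. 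Granting that analysis, the theorem follows immediately.
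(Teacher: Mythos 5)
Your proposal is correct and follows exactly the paper's own route: the theorem is deduced by combining the decomposition of Proposition \ref{proposition:phi_g and F_g} with the modularity of $F_g$ from Proposition \ref{proposition:F_g modular}, and then applying Proposition \ref{prop:autfms-phiCFs} with $m=1$. You also correctly locate the substantive work in the cusp analysis inside Proposition \ref{proposition:F_g modular}, which matches the paper's presentation.
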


Taking $g$ to be the identity in Theorem \ref{theorem:jacobi form} produces a Jacobi form $\phi_e$ of weight 0, index 1, and level 1; it must be the K3 elliptic genus, up to a constant. The constant can be determined by setting $z=0$. Taking $z=0$ in (\ref{eqn:autfms:phi01}) and (\ref{eqn:autfms:phi-21}), and applying (\ref{equation:phi_g and F_g}),
we see that in fact it is exactly the K3 elliptic genus, but expressed in a rather non-standard way:
\be\label{eqn:tg-K3EG}
Z_{K3}(\tau,z)=\frac{1}{2}\frac{\vartheta_3(\tau,z)^2}{\vartheta_3(\tau,0)^2}\frac{\Delta(\tau)^2}{\Delta(2\tau)\Delta(\tau/2)}-\frac{1}{2}\frac{\vartheta_4(\tau,z)^2}{\vartheta_4(\tau,0)^2}\frac{\Delta(\tau/2)}{\Delta(\tau)}-2^{11}\frac{\vartheta_2(\tau,z)^2}{\vartheta_2(\tau,0)^2}\frac{\Delta(2\tau)}{\Delta(\tau)}.
\ee

Explicit expressions for the $\phi_g$ are recorded in Table \ref{table:4space}. Coincidences with the weight zero weak Jacobi forms of Mathieu moonshine, and with the K3 sigma model twining genera computed in \cite{GHV} are recorded in Table \ref{table:4spacecoin}. Observe that every twining genus appearing in \cite{GHV} appears also in Table \ref{table:4spacecoin}. This leads us to the following conjecture.

\begin{conjecture}\label{conj:sigmod}
The twined elliptic genus attached to a supersymmetry preserving automorphism $g\in G_\Pi$ of the supersymmetric non-linear K3 sigma model determined by $\Pi=P_X\oplus P_Z$ coincides with $\phi_g$.
\end{conjecture}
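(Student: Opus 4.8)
The plan is to reduce the conjectured equality of two functions to a comparison of finitely many modular data, and then to confront the genuine gap that obstructs a complete proof. Write $Z_g(\tau,z)$ for the twined elliptic genus attached to the supersymmetry preserving automorphism $g$. Following Witten \cite{MR970278,MR885560}, $Z_g$ is computed as a trace over the Ramond sector of the super conformal field theory, with $g$ and $(-1)^F$ inserted, graded by $L(0)$, $\bar L(0)$ and the left-moving $\U(1)$ charge $J(0)$; supersymmetry forces the $\bar q$-dependence to cancel, so that $Z_g$ is holomorphic and, by the equivariant refinement of Witten's argument anticipated in \cite{GHV}, a weak Jacobi form of weight $0$, index $1$, and some level. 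By Theorem \ref{theorem:jacobi form}, $\phi_g$ is a weak Jacobi form of the same weight and index. Applying Proposition \ref{prop:autfms-phiCFs} with $m=1$ to both functions, each is determined by a constant multiple of $\phi_{0,1}$ together with a weight-$2$ modular form multiple of $\phi_{-2,1}$. Evaluating at $z=0$ pins down the constant: the twined elliptic genus at $z=0$ computes the $g$-equivariant Euler characteristic of the target, which equals $\chi_g$, while $\phi_g(\tau,0)=\chi_g$ by Proposition \ref{proposition:phi_g and F_g} (using $\phi_{0,1}(\tau,0)=12$ and $\phi_{-2,1}(\tau,0)=0$). Hence $Z_g=\phi_g$ if and only if the weight-$2$ modular form multiplying $\phi_{-2,1}$ in $Z_g$ coincides with $F_g$.

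Second, I would use the correspondence of Huybrechts (Proposition \ref{prop:de:aetohi}, Theorem \ref{thm:de:deco}) together with the embedding of sigma-model symmetry groups into $\Co_0$ established in \cite{GHV} to identify both $G_\Pi$ and the symmetry group of the sigma model with the same subgroup of $\Co_0$ fixing a $4$-space in $\LL\otimes_\ZZ\RR$. The function $\phi_g$ depends only on the conjugacy class of $g$ in $\Co_0$, since the explicit formula of Proposition \ref{proposition:phi_g} involves only the eigenvalues of $g$ on $\a=\mlc$ (equivalently, the Frame shape, with $D_g$ fixed by the orientation conventions of \S\ref{sec:tg}). If one grants that $Z_g$ likewise depends only on $[g]\subset\Co_0$---itself part of the universality being asserted---then the conjecture reduces to a comparison over the finitely many relevant conjugacy classes. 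For the classes whose twined genera have been computed in \cite{GHV,Gaberdiel:2012um,2014arXiv1403.2410V}, this comparison is the finite check already recorded in Table \ref{table:4spacecoin}, and it succeeds in every case.

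The main obstacle is the remaining classes, for which no sigma-model computation of $Z_g$ exists: the Hilbert spaces of the corresponding super conformal field theories are not known, so there is nothing on the geometric side to compare $F_g$ against. The only conceptual route I see to close this gap is to upgrade Proposition \ref{prop:sigmod-Virmodisom}---the Virasoro-module isomorphism between $\vsn$ and the Neveu--Schwarz sector of the distinguished K3 sigma model---to a statement that holds $g$-equivariantly and uniformly across all K3 sigma models. Concretely, one would seek a model-independent identification of $\vsnt=\VV$, together with its $\Co_0$-action and the grading by $L(0)$ and $J(0)$, with the space of right-moving Ramond ground states of an arbitrary K3 sigma model, under which $\zz\wh{g}$ implements $g(-1)^F$. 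Such an identification would force the equivariant index computed by $\phi_g$ to equal $Z_g$ for every $g$, proving the conjecture. Establishing it, however, amounts to proving the universality of $\vsn$ for K3 sigma models, which is exactly the open problem the present work only conjectures; absent a new input constraining all K3 sigma-model BPS spectra simultaneously, I do not expect the finite verification to extend to a complete proof.
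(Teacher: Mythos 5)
The statement you were asked about is a conjecture, and the paper offers no proof of it: its entire support is exactly what you describe, namely the finite verification against every twined genus computed in \cite{GHV,Gaberdiel:2012um,2014arXiv1403.2410V} (recorded in Table~\ref{table:4spacecoin}), supplemented by Proposition~\ref{prop:sigmod-Virmodisom} as evidence that $\vsn$ really is tied to a genuine K3 sigma model. Your stance therefore matches the paper's, and your reduction is a correct sharpening that the paper leaves implicit: by Proposition~\ref{prop:autfms-phiCFs} with $m=1$, both $Z_g$ and $\phi_g$ are of the form $\tfrac{1}{12}C\,\phi_{0,1}+F\,\phi_{-2,1}$; the $z=0$ specialization pins $C=\chi_g$ on both sides (via Proposition~\ref{proposition:phi_g and F_g} and the equivariant Euler characteristic), so the conjecture is equivalent to matching weight-two forms against $F_g$ of (\ref{equation:F_g}). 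You also correctly locate the obstruction (unknown Hilbert spaces for generic models) and the only visible route to a proof (a $g$-equivariant, model-independent upgrade of Proposition~\ref{prop:sigmod-Virmodisom}), which is the universality statement the paper deliberately leaves as a conjecture. One correction to your second step: $\phi_g$ is \emph{not} a function of the $\Co_0$-conjugacy class alone. The quantity $D_g$ of (\ref{eqn:tg:Dg}) is determined by $[g]$ only up to sign, the orientation convention of \S\ref{sec:tg} does not resolve this sign class-by-class, and the two signs can yield genuinely different Jacobi forms --- see the pairs of entries for the classes $4D$, $6L$, $6M$, $8D$ and $9C$ in Table~\ref{table:4spacecoin}. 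So your finite comparison should run over pairs (class, sign of $D_g$) rather than classes; this remains finite, and it in fact harmonizes with the physics, where symmetries in the same $\Co_0$ class can have distinct twining genera, so the hypothesis you flagged as ``part of the universality being asserted'' must likewise be weakened to class-plus-sign data.
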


\section{Umbral Moonshine}\label{sec:um}

In addition to twined K3 elliptic genera, a number of which coincide with weak Jacobi forms of Mathieu moonshine (cf. Table \ref{table:4spacecoin}), graded traces on $\VV$ defined by $\U(1)$ elements corresponding to higher dimensional subspaces of $\a$ recover functions arising from umbral moonshine, as we will now explain. 

In \cite{UM}, to each lambency $\ell\in\{2,3,4,5,7,13\}$ is associated a Jacobi form $Z^{(\ell)}$ of weight 0 and index $\ell-1$. In \cite{UMNL} this is expanded to a correspondence associating to each Niemeier lattice with root system $X$ a lambency $\ell$ and a meromorphic Jacobi form $\psi^X$ of weight 1, with index given by the Coxeter number of $X$. For lambencies $\ell$ occurring in \cite{UM}, we recover $Z^{(\ell)}$ from $\psi^X$ according to the rule
\begin{gather}
	Z^{(\ell)}(\tau,z)=-\ii\frac{\vartheta_1(\tau,z)^2}{\vartheta_1(\tau,2z)\eta(\tau)^3}\psi^X(\tau,z),
\end{gather}
upon taking $X=A_{\ell-1}^{n}$ for $n=24/(\ell-1)$.

For $d\in\{2,4,6,8,10,12\}$ define a corresponding $\ell$ by $\ell=\frac{d}{2}+1$. Identify $\a=\LL\otimes_\ZZ\CC$. Choose a $2d$-dimensional real vector space $\Pi<\LL\otimes_\ZZ\RR\subset\a$ and let $\{a^\pm_i\}$ be bases for isotropic subspaces $\a^\pm<\a$ constituting a polarization $\a=\a^-\oplus\a^+$. Assume that $\lab a_i^-,a_j^+\rab=\delta_{i,j}$ and 
\begin{gather}
\Pi\otimes_\RR\CC=
\Span\left\{a_i^-,a_i^+\mid 1\leq i\leq d\right\}.
\end{gather}
Define an associated $\U(1)$ element $\jmath\in \vsn$, with $\vsn$ as in (\ref{eqn:dist-vsnvsnt}), realized using $\a=\a^-\oplus\a^+$, by setting
\begin{gather}
\jmath:=\frac12\sum_{i=1}^{d} a_i^-(-1/2)a_i^+(-1/2)\vv.
\end{gather}
Then $\jmath$ has level $d$ according to Lemma \ref{lem:tg-JPirel}.

Recall from \S\ref{sec:spin} that $g\mapsto \wh{g}$ denotes the natural isomorphism relating $G=\Aut(\LL)$ to the naturally corresponding copy $\wh{G}$ of $\Co_0$ in $\Spin(\a)$. If $g\in G$ is chosen so that $g$ restricts to the identity on $\Pi$ then the action of $\wh{g}$ on $\vsnt$ commutes with that of $J(0):=\jmath_{(0),\tw}$, and we may define
\begin{gather}
	\phi_g^{(\ell)}(\tau,z):=-\tr_{\vsnt}\zz\wh{g}y^{J(0)}q^{L(0)-c/24}.
\end{gather}
Cf. (\ref{eqn:tg-phig}). 

To describe the $\phi_g^{(\ell)}$ explicitly choose a $g$ in $G$ with $g|_{\Pi}=\Id$. Assume that the $a_i^\pm$ are eigenvectors for $g$ and write 
\begin{gather}
	\wh{g}=\prod_{i=1}^{10}e^{\alpha_iX_i},
\end{gather}
where $X_i=\frac{\ii}{2}(a_i^-a_i^+-a_i^+a_i^-)$ as in (\ref{eqn:spin-Xi}), and the $\alpha_i\in 2\pi\QQ$ are chosen so that $\lambda_i^{\pm 1}=e^{\pm 2 \alpha_i\ii }$ is the eigenvalue for the action of $g$ on $a_i^\pm$. We may assume that $\alpha_i=0$ for $1\leq i\leq d$ since $g$ fixes the corresponding $a^\pm_i$ by hypothesis. 
Set $\nu_i:=e^{\alpha_i \ii}$, and define
\begin{gather}\label{eqn:um:Dellg}
	D^{(\ell)}_{g}:=\prod_{i=d+1}^{12}(\nu_i-\nu_i^{-1}).
\end{gather}
Similar to the discussion around (\ref{eqn:tg:Dg}), if $\nu':=\prod_{i=d+1}^{12}\nu_i$ then
\begin{gather}
D^{(\ell)}_{g}=\nu'\prod_{i=d+1}^{12}(1-\lambda_i^{-1}),
\end{gather}
so 
$D^{(\ell)}_{g}$ vanishes if and only if $g$ has a fixed point in its action on the orthogonal complement of $\Pi$ in $\LL$. 
In particular, $D^{(\ell)}_{g}$ vanishes whenever the sublattice of $\LL$ fixed by $g$ has rank larger than $2d$.

By a method directly similar to the proof of Proposition \ref{proposition:phi_g}, we obtain the following explicit expression for $\phi^{(\ell)}_g$.

\begin{proposition}\label{proposition:phi_g^ell}
Let $d\in\{2,4,6,8,10,12\}$ and $\ell=\frac{d}{2}+1$. Let $\Pi$ be a $2d$-dimensional subspace of $\LL\otimes_\ZZ\RR$. If $g\in G$ and $g|_\Pi=\Id$ then
\begin{align}
\begin{split}\label{eqn:phi_g^ell}
\phi_g^{(\ell)}(\tau,z)
= & -\frac{1}{2} \left(\frac{\vartheta_4(\tau,z)^d}{\vartheta_4(\tau,0)^d}\frac{\eta_g(\tau/2)}{\eta_g(\tau)} - \frac{\vartheta_3(\tau,z)^d}{\vartheta_3(\tau,0)^d}\frac{\eta_{-g}(\tau/2)}{\eta_{-g}(\tau)}\right) \\
& + \frac{1}{2}\left((-1)^\ell
\frac{\vartheta_1(\tau,z)^d}{\eta(\tau)^{3d}}D_{g}^{(\ell)}\eta_g(\tau)
- 
\frac{\vartheta_2(\tau,z)^d}{\vartheta_2(\tau,0)^d}C_{-g}\eta_{-g}(\tau) 
\right)
\end{split}
\end{align}
after substituting $q=e^{2\pi \ii \tau}$ and $y=e^{2\pi \ii z}$. In particular, $\phi_g^{(\ell)}$ is the Fourier expansion of a holomorphic function $\phi_g^{(\ell)}(\tau,z)$ on $\HH\times \CC$, invariant under $(\tau,z)\mapsto(\tau+m,z+n)$, for $m,n\in\ZZ$.
\end{proposition}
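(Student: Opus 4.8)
The plan is to follow the proof of Proposition~\ref{proposition:phi_g} essentially verbatim, the only changes being the replacement of the two charged oscillator pairs $a_X^\pm,a_Z^\pm$ there by the $d$ charged pairs $a_1^\pm,\dots,a_d^\pm$ here, and the consequent replacement of each theta quotient by its $d$-th power. Concretely, I would use the decomposition $\vsnt=A(\a)^1\oplus A(\a)_\tw^0$ of (\ref{eqn:dist-vsnvsnt}) and compute the contributions of the two summands to $\phi_g^{(\ell)}=-\tr_{\vsnt}\zz\wh g\,y^{\Jo}q^{\Lo-c/24}$ separately. Throughout I fix the eigenbasis $\{a_i^\pm\}$ as in the statement, so that the first $d$ pairs are $g$-fixed (eigenvalue $\lambda_i=1$) and carry $\Jo$-charge $\pm1$ by Lemma~\ref{lem:tg-JPirel}, while the remaining $12-d$ pairs are $\Jo$-neutral and carry the generic eigenvalues $\lambda_i^{\pm1}$. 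Since $g|_\Pi=\Id$, the operator $\wh g$ commutes with $\Jo$, so both traces are well defined.

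For the untwisted summand I would expand $\tr_{A(\a)}\wh g\,y^{\Jo}q^{\Lo-c/24}$ and $\tr_{A(\a)}\zz\wh g\,y^{\Jo}q^{\Lo-c/24}$ as infinite products over the half-integer modes $u(-n+\tfrac12)$, $n\ge1$; inserting $\zz$ simply flips the sign of every factor, as $\zz$ acts by fermion parity. The first $d$ (charged) pairs produce $y^{\pm1}$-weighted factors that the Jacobi triple product (\ref{eqn:autfms-theta1prod}) identifies with $\vartheta_3(\tau,z)^d/\vartheta_3(\tau,0)^d$ and $\vartheta_4(\tau,z)^d/\vartheta_4(\tau,0)^d$, while the remaining $12-d$ pairs assemble into $\eta_{-g}(\tau/2)/\eta_{-g}(\tau)$ and $\eta_g(\tau/2)/\eta_g(\tau)$ by (\ref{eqn:dist-fracetag}). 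Combining these two traces as in the proof of Proposition~\ref{proposition:phi_g}---the contribution of $A(\a)^1$ to $\phi^{(\ell)}_g$ being $\tfrac12\big(\tr_{A(\a)}\wh g\,y^{\Jo}q^{\Lo-c/24}-\tr_{A(\a)}\zz\wh g\,y^{\Jo}q^{\Lo-c/24}\big)$---gives the first line of (\ref{eqn:phi_g^ell}).

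The twisted summand is treated the same way, now using $A(\a)_\tw\simeq\bigwedge\!\big(\bigoplus_{n<0}\a(n)\big)\otimes\CM$ from (\ref{eqn:va-AtwwedgeCM}) together with the integer modes $u(n)$. Here the polarization is essential: the $\a^-$-zero modes create and the $\a^+$-zero modes annihilate $\vv_\tw$, so the zero-mode trace over $\CM$ yields the constants $C_{-g}$ and $D^{(\ell)}_g$ of (\ref{eqn:spin-Cgnu}) and (\ref{eqn:um:Dellg}), the latter being a product over only the $12-d$ neutral pairs. The triple product then gives $\tr_{A(\a)_\tw}\wh g\,y^{\Jo}q^{\Lo-c/24}=\frac{\vartheta_2(\tau,z)^d}{\vartheta_2(\tau,0)^d}C_{-g}\eta_{-g}(\tau)$ and $\tr_{A(\a)_\tw}\zz\wh g\,y^{\Jo}q^{\Lo-c/24}=(-1)^{\ell+1}\frac{\vartheta_1(\tau,z)^d}{\eta(\tau)^{3d}}D^{(\ell)}_g\eta_g(\tau)$, and the contribution of $A(\a)_\tw^0$, equal to $-\tfrac12\big(\tr_{A(\a)_\tw}\wh g\,y^{\Jo}q^{\Lo-c/24}+\tr_{A(\a)_\tw}\zz\wh g\,y^{\Jo}q^{\Lo-c/24}\big)$, produces the second line of (\ref{eqn:phi_g^ell}).

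I expect the one genuinely delicate point to be the sign $(-1)^\ell$ on the $\vartheta_1^d$ term. It arises because the product formula for $\vartheta_1$ in (\ref{eqn:autfms-theta1prod}) carries the prefactor $-\ii$, so passing from the manifestly real infinite product for the $\zz$-inserted twisted-sector trace to $\vartheta_1(\tau,z)^d/\eta(\tau)^{3d}$ costs a factor $(-\ii)^d=(-1)^{d/2}=(-1)^{\ell-1}$; the remaining minus sign in the definition of $\phi^{(\ell)}_g$ then turns this into $(-1)^\ell$. Once the two contributions are assembled, holomorphy on $\HH\times\CC$ is immediate from convergence of the products for $|q|<1$, and invariance under $(\tau,z)\mapsto(\tau+m,z+n)$ follows because $\Lo-c/24$ is integer-valued on both $A(\a)^1$ and $A(\a)_\tw^0$ (by (\ref{eqn:cliff-Atwgrading}) and the parity of the fermion number), so that the trace is an honest power series in $q$ and $y^{\pm1}$.
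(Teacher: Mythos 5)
Your proposal is correct and is exactly the paper's intended argument: the paper gives no separate proof of Proposition~\ref{proposition:phi_g^ell} beyond the remark that it follows ``by a method directly similar to the proof of Proposition~\ref{proposition:phi_g},'' and your write-up carries out precisely that generalization --- the split of $\vsnt$ into $A(\a)^1$ and $A(\a)_\tw^0$ with the projections $\frac12(1\mp\zz)$, the cancellation of the $d$ fixed pairs' eta factors against the $\vartheta_i(\tau,0)^d$ denominators, the zero-mode traces producing $C_{-g}$ and $D^{(\ell)}_g$, and the sign $(-\ii)^d=(-1)^{\ell-1}$ from the $\vartheta_1$ product formula combining with the overall minus sign to give $(-1)^\ell$. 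All of these computational claims check out, so nothing further is needed.
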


On the strength of Proposition \ref{proposition:phi_g^ell} we may henceforth regard $\phi_g^{(\ell)}=\phi_g^{(\ell)}(\tau,z)$ as a holomorphic function on $\HH\times \CC$. As in \S\ref{sec:tg}, we would like to show that $\phi_g^{(\ell)}$ is a weak Jacobi form, and this is accomplished by giving an expression in terms of the standard weak Jacobi forms $\phi_{0,1}$ and $\phi_{-2,1}$ (cf. \S\ref{sec:autfms}), and some particular modular forms depending on $g$. With this in mind, define 
\begin{gather}\label{equation:F_2k,g}
	\begin{split}
F_{2j,g}(\tau):=-\Lambda_2(\tau/2)^j\frac{\eta_g(\tau/2)}{\eta_g(\tau)}+\Lambda_2(\tau/2+1/2)^j\frac{\eta_{-g}(\tau/2)}{\eta_{-g}(\tau)}-
(-2\Lambda_2(\tau))^jC_{-g}\eta_{-g}(\tau)
	\end{split}
\end{gather}
for $j\ge0$ and $g$ as in Proposition \ref{proposition:phi_g^ell}.

\begin{proposition}\label{proposition:F_2k,g modular}
Let $\ell$ and $g$ be as in Proposition \ref{proposition:phi_g^ell}. Then $F_{0,g}$ is constant, and there exists a positive integer $N$ such that $F_{2j,g}\in M_{2j}(\Gamma_0(N))$ for $0<j<\ell$.
\end{proposition}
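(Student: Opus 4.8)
The plan is to follow the template of the proof of Proposition \ref{proposition:F_g modular}, carrying the exponent $j$ through uniformly. Throughout write $t_{\pm g}(\tau):=\eta_{\pm g}(\tau)/\eta_{\pm g}(2\tau)$ as in (\ref{eqn:tg-tpmg}), so that $\eta_{\pm g}(\tau/2)/\eta_{\pm g}(\tau)=t_{\pm g}(\tau/2)$. Since $g$ fixes $\Pi$ pointwise it has a fixed vector in $\a$, so $C_g=0$ by (\ref{eqn:spin-Cgnu}) and Lemma \ref{lemma:eta identity} collapses to $2\chi_g+t_g(\tau/2)-t_{-g}(\tau/2)+C_{-g}\eta_{-g}(\tau)=0$. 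For $j=0$ this immediately gives $F_{0,g}=-t_g(\tau/2)+t_{-g}(\tau/2)-C_{-g}\eta_{-g}(\tau)=2\chi_g$, a constant, as claimed. For $0<j<\ell$ I would first use the same identity to eliminate $C_{-g}\eta_{-g}(\tau)=-2\chi_g-t_g(\tau/2)+t_{-g}(\tau/2)$ from (\ref{equation:F_2k,g}), and then use $t_{-g}(\tau/2)=-t_g((\tau+1)/2)$, which follows from the relation $t_g(\tau+1/2)=-t_{-g}(\tau)$ already invoked in Proposition \ref{proposition:F_g modular}, together with $\Lambda_2(\tau+1)=\Lambda_2(\tau)$, to express everything in terms of $t_g$ alone.

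The key observation is that this rearrangement exhibits $F_{2j,g}$ as a Hecke image. With $T_{2j}(2)$ as in (\ref{eqn:tg-Tk2}) and $f_{j,g}(\tau):=\bigl((-2\Lambda_2(2\tau))^j-\Lambda_2(\tau)^j\bigr)t_g(\tau)$, I expect the clean identity $F_{2j,g}(\tau)=2\,(T_{2j}(2)f_{j,g})(\tau)+2(-2)^j\chi_g\,\Lambda_2(\tau)^j$. The second summand is a holomorphic modular form of weight $2j$ for $\Gamma_0(2)$. For the first, $(-2\Lambda_2(2\tau))^j-\Lambda_2(\tau)^j\in M_{2j}(\Gamma_0(4))$, while $t_g$ is a principal modulus for a genus zero group $\Gamma_g$ containing some $\Gamma_0(N_g)$ by Theorem \ref{thm:dist-Tsgpm}; hence $f_{j,g}$ is a weakly holomorphic modular form of weight $2j$ for $\Gamma_g\cap\Gamma_0(4)$ whose only poles sit over $\Gamma_g\infty$. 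As $T_{2j}(2)$ preserves weight and weak holomorphy and alters the level in a controlled way, this already shows $F_{2j,g}$ is a weakly holomorphic modular form of weight $2j$ for some $\Gamma_0(N)$.

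It remains to rule out poles at the cusps, and this is the step I expect to be the main obstacle. As in the earlier proof I would pass to $F_{2j,g}(2\tau)=G_g(\tau)t_g(\tau)+G_{-g}(\tau)t_{-g}(\tau)+2(-2)^j\chi_g\Lambda_2(2\tau)^j$, where $G_g:=(-2\Lambda_2(2\tau))^j-\Lambda_2(\tau)^j$ and $G_{-g}:=\Lambda_2(\tau+1/2)^j-(-2\Lambda_2(2\tau))^j$ lie in $M_{2j}(\Gamma_0(4))$; here one uses the elementary $q$-expansion identity $\Lambda_2(\tau+1/2)=2\Lambda_2(2\tau)-\Lambda_2(\tau)$. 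The functions $t_{\pm g}$ carry only simple poles over $\Gamma_{\pm g}\infty$, so I must verify that $G_{\pm g}$ vanishes at every cusp where the corresponding $t_{\pm g}$ blows up. The decisive point is that $G_g$ and $G_{-g}$ have, for every $j$, exactly the same vanishing pattern on the three cusps of $\Gamma_0(4)$ as $\Lambda_4(\tau)$ and $\Lambda_4(\tau+1/2)$ do: if $a,b$ denote the constant terms of $\Lambda_2(2\tau),\Lambda_2(\tau)$ in the local expansion at a given cusp, then $\Lambda_4(\tau)=4\Lambda_2(2\tau)+2\Lambda_2(\tau)$ vanishes exactly when $b=-2a$, in which case $G_g=(-2a)^j-b^j$ vanishes identically in $j$, and $\Lambda_4(\tau+1/2)=8\Lambda_2(2\tau)-2\Lambda_2(\tau)$ vanishes exactly when $b=4a$, in which case $G_{-g}=(2a-b)^j-(-2a)^j$ vanishes identically in $j$; at the remaining (infinite) cusp one has $a=b$, whence $G_g+G_{-g}=0$ and the equal-residue simple poles of $t_g$ and $t_{-g}$ there cancel. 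Thus the problem reduces verbatim to the three conditions (1)--(3) on $\Gamma_{\pm g}$ recorded in the proof of Proposition \ref{proposition:F_g modular}, which depend only on the conjugacy class of $g$ in $\Co_0$ and were already checked there using \S\ref{sec:tables-comps}. The one point to confirm carefully is that these conditions remain available for the (possibly more special) classes arising here, where $g$ fixes a $2d$-space rather than merely a $4$-space; since a larger fixed space only shrinks the sets $\Gamma_{\pm g}\infty$ of bad cusps, I anticipate no new verification is needed.
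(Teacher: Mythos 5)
Your proposal is correct and takes essentially the same route as the paper: the $j=0$ case via Lemma \ref{lemma:eta identity} (using $C_g=0$), the rewriting of $F_{2j,g}$ as $2\chi_g(-2\Lambda_2)^j$ plus the Hecke image under $T_{2j}(2)$ of $t_g\cdot\bigl((-2\Lambda_2(2\cdot))^j-\Lambda_2^j\bigr)$, and the reduction of the pole check to the same three cusp conditions modulo $\Gamma_0(4)$ already verified in Proposition \ref{proposition:F_g modular}, which apply here simply because any $g$ fixing a $2d$-space with $2d=4(\ell-1)\geq 4$ pointwise in particular fixes a $4$-space (this, rather than any ``shrinking of bad cusps,'' is the clean justification for your last step, and is exactly the paper's closing observation). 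Your only departure is presentational: you derive the cusp vanishing and cancellation pattern of the coefficient forms uniformly in $j$ from the constant terms of $\Lambda_2(2\tau)$ and $\Lambda_2(\tau)$, where the paper records the values of $G_{2j}(\tau)$ and $G_{2j}(\tau+1/2)$ directly in Table \ref{tab:G2jcusps}.
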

\begin{proof}
The proof is very similar to that of Proposition \ref{proposition:F_g modular}.
That is to say, it is ultimately a case by case check, but we use the results of \cite{vacogm} to replace the explicit calculation of modular forms with simple checks on properties of the invariance groups $\Gamma_g$ of the functions $T^s_g(2\tau)$ (cf. (\ref{eqn:dist-Tsg}), Theorem \ref{thm:dist-Tsgpm}).

To begin, consider $F_{0,g}$. With $t_g$ as defined by (\ref{eqn:tg-tpmg}) we have 
\begin{gather}\label{eqn:um-F0gtpmg}
F_{0,g}(\tau)=-t_g(\tau/2)+t_{-g}(\tau/2)-C_{-g}\eta_{-g}(\tau).
\end{gather} 
Note that $g$ has fixed points in its action on $\a$, by hypothesis, so $C_g=0$. So (\ref{eqn:tg-etaidCgzero}) holds, according to Lemma \ref{lemma:eta identity}. Comparing (\ref{eqn:tg-etaidCgzero}) to (\ref{eqn:um-F0gtpmg}) we see that $F_{0,g}(\tau)=2\chi_g$. In particular, $F_{0,g}$ is constant, as required.

Now let $0<j<\ell$. Applying Lemma \ref{lemma:eta identity} to rewrite $C_{-g}\eta_{-g}(\tau)$ in terms of $t_{\pm g}$, and also using $t_g(\tau+1/2)=-t_{-g}(\tau)$, we have $F_{2j,g}(\tau)=F_{2j,g}'(\tau)+2\chi_g(-2\Lambda_2(\tau))^j$, where
\begin{gather}
	F'_{2j,g}(\tau):=t_g\left(\frac{\tau}2\right)G_{2j}\left(\frac\tau 2\right)+t_g\left(\frac{\tau+1}2\right)G_{2j}\left(\frac{\tau+1}2\right),\\
	G_{2j}(\tau):=-\Lambda_2(\tau)^j+(-2\Lambda_2(2\tau))^j.
\end{gather}
We require to show that $F'_{2j,g}(\tau)$ is a modular form for some $\Gamma_0(N)$.

So set 
\begin{gather}
f_{2j,g}(\tau):=2 t_g(\tau)G_{2j}(\tau)
\end{gather}
and observe that $f_{2j,g}(\tau)$ is a weakly holomorphic modular form of weight $2j$ for some $\Gamma_0(N)$, since the invariance group of $t_g$ contains some $\Gamma_0(N)$. Also, $F_{2j,g}'=T_{2j}(2)f_{2j,g}$, where $T_k(2)$ denotes the second order Hecke operator on modular forms of weight $k$ for $\Gamma_0(M)$, for any even $M$. (Cf. (\ref{eqn:tg-Tk2}).)
So $F'_{2j,g}$ is a weakly holomorphic modular form of weight $2j$ for some $\Gamma_0(N)$, and it remains to verify that $F'_{2j,g}$ has no poles at cusps.

To this end, observe
\begin{gather}\label{eqn:tg-F2jgp2}
	F'_g(2\tau)=
	t_{g}(\tau)
	G_{2j}({\tau})
	-
	t_{-g}({\tau})
	G_{2j}\left(\tau+\frac{1}2\right),
\end{gather}
and note that both $G_{2j}(\tau)$ and $G_{2j}(\tau+1/2)$ are modular forms of weight $2j$ for $\Gamma_0(4)$. Table \ref{tab:G2jcusps} is the appropriate analogue of Table \ref{tab:Lambda4cusps}, presenting the asymptotic behavior of $G_{2j}(\tau)$ and $G_{2j}(\tau+1/2)$ at the three cusps of $\Gamma_0(4)$.
\begin{table}[ht]
\centering
\caption{Modular forms of level four at cusps}
\begin{tabular}{ccc}\label{tab:G2jcusps}
&$G_{2j}(\tau)$&$G_{2j}(\tau+1/2)$\\
\hline
$1$&$((-2)^j-1){12^{-j}}+O(q)$&${((-2)^j-1)}{12^{-j}}+O(q)$\\
$1/2$&$O(q)$&$(1-(-2)^j)12^{-j}+O(q)$\\
$1/4$&$(1-(-2)^j)48^{-j}+O(q^{1/4})$&$O(q^{1/4})$
\end{tabular}
\end{table}

Just as for $F''_g$ in the proof of Proposition \ref{proposition:F_g modular}, the function $F'_{2j,g}$ cannot have a pole at $\alpha\in \widehat{\QQ}$ unless one or both of $t_{\pm g}$ do, and the $t_{\pm g}$ can only have poles at points $\alpha\in\widehat{\QQ}$ such that $\alpha\in\Gamma_{\pm g}\infty$. Comparing (\ref{eqn:tg-F2jgp2}) with Table \ref{tab:G2jcusps} we see that we require to verify, for arbitrary $\alpha\in\widehat{\QQ}$, that
\begin{enumerate}
\item
if $\alpha\in\Gamma_g\infty$ and $\Gamma_{-g}\infty$ then $\alpha=1/4\mod \Gamma_0(4)$,
\item
if $\alpha\in \Gamma_g\infty$ and $\alpha\notin\Gamma_{-g}\infty$ then $\alpha=1/2\mod \Gamma_0(4)$, and
\item
if $\alpha\notin \Gamma_{g}\infty$ and $\alpha\in\Gamma_{-g}\infty$ then $\alpha=1\mod \Gamma_0(4)$.
\end{enumerate}
But these statements have been verified already, for any $g\in G$ such that the rank of $\LL^g$ is at least $4$, in the course of the proof of Proposition \ref{proposition:F_g modular}. Since $\Pi$ has dimension $2d=4(\ell-1)\geq 4$, the proof of the proposition is complete.
\end{proof}

\begin{proposition}\label{proposition:phi_g^ell and F_2k,g}
Let $\ell$ and $g$ be as in Proposition \ref{proposition:phi_g^ell}. Then we have
\be\label{equation:phi_g and F_2k,g}
	\begin{split}
\phi_g^{(\ell)}(\tau,z)
=&-\frac12D_g^{(\ell)}\eta_g(\tau)\phi_{-2,1}(\tau,z)^{\ell-1}\\
&+\sum_{j=0}^{\ell-1}
(-1)^j\frac12\binom{\ell-1}{j}\frac1{12^{\ell-j-1}}
F_{2j,g}(\tau)\phi_{0,1}(\tau,z)^{\ell-j-1}\phi_{-2,1}(\tau,z)^j.
	\end{split}
\ee
\end{proposition}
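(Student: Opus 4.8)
The plan is to mirror the proof of Proposition \ref{proposition:phi_g and F_g}, replacing the single linear substitution used there with a binomial expansion. I would start from the explicit formula (\ref{eqn:phi_g^ell}) for $\phi_g^{(\ell)}$ and feed in the identities of Lemma \ref{lemma:theta function identities}. Writing $c_2:=2\Lambda_2(\tau)$, $c_3:=-\Lambda_2(\tau/2+1/2)$ and $c_4:=-\Lambda_2(\tau/2)$, those identities read $\vartheta_i(\tau,z)^2/\vartheta_i(\tau,0)^2=\tfrac1{12}\phi_{0,1}+c_i\phi_{-2,1}$ for $i\in\{2,3,4\}$, together with $\vartheta_1(\tau,z)^2/\eta(\tau)^6=-\phi_{-2,1}$. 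Since $d=2(\ell-1)$, every theta quotient occurring in (\ref{eqn:phi_g^ell}) is the $(\ell-1)$-th power of one of these expressions.

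First I would dispatch the $\vartheta_1$ term: because $\vartheta_1(\tau,z)^d/\eta(\tau)^{3d}=(-\phi_{-2,1})^{\ell-1}=(-1)^{\ell-1}\phi_{-2,1}^{\ell-1}$, the prefactor $\tfrac12(-1)^\ell$ in (\ref{eqn:phi_g^ell}) produces $\tfrac12(-1)^\ell(-1)^{\ell-1}=-\tfrac12$, which is exactly the isolated summand $-\tfrac12 D_g^{(\ell)}\eta_g(\tau)\phi_{-2,1}^{\ell-1}$ of (\ref{equation:phi_g and F_2k,g}). For $i\in\{2,3,4\}$ I would apply the binomial theorem,
\[
\left(\frac{\vartheta_i(\tau,z)^2}{\vartheta_i(\tau,0)^2}\right)^{\ell-1}
=\sum_{j=0}^{\ell-1}\binom{\ell-1}{j}\frac{c_i^{\,j}}{12^{\ell-1-j}}
\phi_{0,1}(\tau,z)^{\ell-1-j}\phi_{-2,1}(\tau,z)^{\,j},
\]
so that after substitution $\phi_g^{(\ell)}$ becomes a finite double sum in which the coefficient of each monomial $\phi_{0,1}^{\ell-1-j}\phi_{-2,1}^{\,j}$ can be read off.

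The heart of the argument is then to recognize this collected coefficient as a multiple of $F_{2j,g}$. Reading the prefactors of the $i=4$, $i=3$ and $i=2$ terms from (\ref{eqn:phi_g^ell}), the coefficient of $\phi_{0,1}^{\ell-1-j}\phi_{-2,1}^{\,j}$ is
\[
\binom{\ell-1}{j}\frac{1}{12^{\ell-1-j}}\cdot\frac12
\left(
-c_4^{\,j}\frac{\eta_g(\tau/2)}{\eta_g(\tau)}
+c_3^{\,j}\frac{\eta_{-g}(\tau/2)}{\eta_{-g}(\tau)}
-c_2^{\,j}C_{-g}\eta_{-g}(\tau)
\right).
\]
Using $c_4^{\,j}=(-1)^j\Lambda_2(\tau/2)^j$, $c_3^{\,j}=(-1)^j\Lambda_2(\tau/2+1/2)^j$ and $c_2^{\,j}=(-1)^j(-2)^j\Lambda_2(\tau)^j$, I would pull out the common factor $(-1)^j$ and observe that the remaining parenthesis is precisely the definition (\ref{equation:F_2k,g}) of $F_{2j,g}(\tau)$ (this works uniformly, including $j=0$, where $(-2\Lambda_2)^0=1$). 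This delivers the summand $\tfrac{(-1)^j}{2}\binom{\ell-1}{j}12^{-(\ell-j-1)}F_{2j,g}(\tau)\phi_{0,1}^{\ell-j-1}\phi_{-2,1}^{\,j}$ of (\ref{equation:phi_g and F_2k,g}), completing the identity. I do not expect a genuine obstacle here: the computation is pure bookkeeping, and the only point demanding care is tracking the interplay of the $(-1)^j$, $2^j$ and $(-2)^j$ factors so that the three theta contributions assemble into $F_{2j,g}$ with the correct sign—exactly as the $\ell=2$ case specializes to Proposition \ref{proposition:phi_g and F_g}.
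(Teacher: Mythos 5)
Your proposal is correct and follows essentially the same route as the paper's proof: substitute the identities of Lemma \ref{lemma:theta function identities} into (\ref{eqn:phi_g^ell}), expand by the binomial theorem, and collect the coefficient of each $\phi_{0,1}^{\ell-j-1}\phi_{-2,1}^{j}$, recognizing it as $(-1)^j\tfrac12\binom{\ell-1}{j}12^{-(\ell-j-1)}F_{2j,g}$ with the $\vartheta_1$ term contributing the isolated $-\tfrac12 D_g^{(\ell)}\eta_g\phi_{-2,1}^{\ell-1}$. Your sign bookkeeping, including the $(-1)^\ell(-1)^{\ell-1}=-1$ cancellation and the identification $c_2^j=(-1)^j(-2\Lambda_2)^j$, matches the paper's computation exactly.
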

\begin{proof}
Recall $d=2(\ell-1)$ and apply the identities of Lemma \ref{lemma:theta function identities} to the expression \eqref{eqn:phi_g^ell} for $\phi_g^{(\ell)}$, replacing the theta quotients with the left-hand sides of \eqref{equation:theta1 identity}-\eqref{equation:theta4 identity}, to find
\begin{align}
\begin{split}
\phi_g^{(\ell)}(\tau,z)
=  -\frac{1}{2} \left(\vphantom{\left(\frac1{12}\right)^{\ell-1}}\right. & 
\left
(\frac{1}{12}\phi_{0,1}(\tau,z)-\Lambda_2(\tau/2)\phi_{-2,1}(\tau,z)\right
)^{\ell-1}\frac{\eta_g(\tau/2)}{\eta_g(\tau)}
\\ 
& - \left
(\frac{1}{12}\phi_{0,1}(\tau,z)-\Lambda_2(\tau/2+1/2)\phi_{-2,1}(\tau,z)\right
)^{\ell-1}\frac{\eta_{-g}(\tau/2)}{\eta_{-g}(\tau)}
\left.\vphantom{\left(\frac1{12}\right)^{\ell-1}}\right) \\
 + \frac{1}{2}\left(\vphantom{\left(\frac1{12}\right)^{\ell-1}}\right. & 
-\phi_{-2,1}(\tau,z)^{\ell-1}D_{g}^{(\ell)}\eta_g(\tau)
\\
& - 
\left
(\frac{1}{12}\phi_{0,1}(\tau,z)+2\Lambda_2(\tau)\phi_{-2,1}(\tau,z)\right
)^{\ell-1}C_{-g}\eta_{-g}(\tau) 
\left.\vphantom{\left(\frac1{12}\right)^{\ell-1}}\right).
\end{split}
\end{align}
Applying the binomial theorem to each term, we find that the coefficient of $\phi_{0,1}^{\ell-j-1}\phi_{-2,1}^j$ for $j<\ell-1$ in the resulting expansion is
\begin{align}
\begin{split}
&-\frac12\binom{\ell-1}{j}\frac1{12^{\ell-j-1}}(-\Lambda_2(\tau/2))^j\frac{\eta_g(\tau/2)}{\eta_g(\tau)}
\\
& \hspace{2cm} +\frac12\binom{\ell-1}{j}\frac1{12^{\ell-j-1}}(-\Lambda_2(\tau/2+1/2))^j\frac{\eta_{-g}(\tau/2)}{\eta_{-g}(\tau)}
\\
& \hspace{4cm} -\frac12\binom{\ell-1}{j}\frac1{12^{\ell-j-1}}(2\Lambda_2(\tau))^jC_{-g}\eta_{-g}(\tau) \\
&=(-1)^j\frac12\binom{\ell-1}{j}\frac1{12^{\ell-j-1}}F_{2j,g}(\tau).
\end{split}
\end{align}
The coefficient of $\phi_{-2,1}^{\ell-1}$ is similar, with the additional term $-\frac12D_g^{(\ell)}\eta_g(\tau)$.
\end{proof}

\begin{theorem}\label{thm:um-wkjac}
Let $d\in\{2,4,6,8,10,12\}$ and $\ell=\frac{d}{2}+1$. Let $\Pi$ be a $2d$-dimensional subspace of $\LL\otimes_\ZZ\RR$. If $g\in G$ and $g|_\Pi=\Id$ then $\phi^{(\ell)}_g$ is a weak Jacobi form of weight $0$ and index $\ell-1$, with some level depending on $g$.
\end{theorem}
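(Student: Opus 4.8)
The plan is to derive the theorem by assembling the three preceding propositions, exactly in parallel with the way Theorem \ref{theorem:jacobi form} follows from Propositions \ref{proposition:phi_g and F_g} and \ref{proposition:F_g modular}. Concretely, I would begin from the explicit expansion of $\phi_g^{(\ell)}$ recorded in Proposition \ref{proposition:phi_g^ell and F_2k,g}, which presents it as a $\CC$-linear combination of the monomials $\phi_{0,1}^{\ell-j-1}\phi_{-2,1}^{j}$ for $0\leq j\leq \ell-1$, with coefficients built from the single extra term $D_g^{(\ell)}\eta_g$ and the functions $F_{2j,g}$ of (\ref{equation:F_2k,g}). The goal is then to recognize this combination as an instance of the normal form furnished by Proposition \ref{prop:autfms-phiCFs} for a weak Jacobi form of weight $0$ and index $m=\ell-1$, namely (\ref{eqn:autfms-phiCFs}).

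First I would treat the $j=0$ term. By Proposition \ref{proposition:F_2k,g modular} the function $F_{0,g}$ is constant (with value $2\chi_g$), so this contribution is a constant multiple of $\phi_{0,1}^{\ell-1}$, matching the leading term $C\phi_{0,1}^{m}$ of (\ref{eqn:autfms-phiCFs}). For each $j$ in the range $1\leq j\leq \ell-1$, the same proposition supplies a single level $N$ with $F_{2j,g}\in M_{2j}(\Gamma_0(N))$, so the coefficient of $\phi_{0,1}^{\ell-j-1}\phi_{-2,1}^{j}$ is, up to the explicit binomial scalar $(-1)^j\tfrac12\binom{\ell-1}{j}12^{j+1-\ell}$, a modular form of weight $2j$ for $\Gamma_0(N)$, which is precisely the coefficient $F_{2j}$ demanded by (\ref{eqn:autfms-phiCFs}).

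The one point requiring separate attention is the top coefficient, at $j=\ell-1$, which receives the extra summand $-\tfrac12 D_g^{(\ell)}\eta_g(\tau)$ not accounted for by the $F_{2j,g}$. Here I would argue, exactly as in the proof of Proposition \ref{proposition:F_g modular}, that $D_g^{(\ell)}\eta_g$ is itself a modular form of weight $2(\ell-1)$ and some level. Indeed, if $D_g^{(\ell)}=0$ the term vanishes; and if $D_g^{(\ell)}\neq 0$ then, as noted following (\ref{eqn:um:Dellg}), $g$ has no nonzero fixed vector in the orthogonal complement of $\Pi$, so that (using $g|_\Pi=\Id$) the fixed lattice $\LL^g$ has rank exactly $\dim\Pi=2d$. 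Since $\eta_g=\prod_{m>0}\eta(m\tau)^{k_m}$ has weight $\tfrac12\sum_m k_m=\tfrac12\rank\LL^g=\tfrac12\cdot 2d=d=2(\ell-1)$, the top coefficient is a sum of two weight $2(\ell-1)$ modular forms, hence again a modular form of weight $2(\ell-1)$ for a common $\Gamma_0(N)$ (absorbing the level of the eta product and enlarging $N$ if necessary, using $M_{2j}(\Gamma_0(N))\subseteq M_{2j}(\Gamma_0(N'))$ when $N\mid N'$).

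Having matched every coefficient against the normal form of Proposition \ref{prop:autfms-phiCFs} with $m=\ell-1$, that proposition immediately yields that $\phi_g^{(\ell)}$ is a weak Jacobi form of weight $0$ and index $\ell-1$ for $\Gamma_0(N)$, completing the proof. I expect the only genuinely substantive step to be the eta-product weight count for $D_g^{(\ell)}\eta_g$, which ties the index $\ell-1$ to the dimension $2d$ of $\Pi$; everything else is the bookkeeping of binomial coefficients and scalars from Proposition \ref{proposition:phi_g^ell and F_2k,g} and the consolidation of finitely many levels into one.
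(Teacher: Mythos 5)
Your proposal is correct and follows essentially the same route as the paper's proof: expand $\phi_g^{(\ell)}$ via Proposition \ref{proposition:phi_g^ell and F_2k,g}, invoke Proposition \ref{proposition:F_2k,g modular} for the coefficients $F_{2j,g}$ (constant for $j=0$, in $M_{2j}(\Gamma_0(N))$ for $0<j<\ell$), handle the extra term $-\tfrac12 D_g^{(\ell)}\eta_g$ by the dichotomy that either $D_g^{(\ell)}=0$ or $\rank\LL^g=2d$ forces $\eta_g$ to have weight $d=2(\ell-1)$, and conclude with Proposition \ref{prop:autfms-phiCFs}. The eta-product weight count you single out as the substantive step is exactly the argument the paper gives.
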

\begin{proof}
Proposition \ref{proposition:phi_g^ell and F_2k,g} shows that $\phi^{(\ell)}_g$ is a homogeneous polynomial in $\phi_{0,1}$ and $\phi_{-2,1}$ of the form required by Proposition \ref{prop:autfms-phiCFs}. Proposition \ref{proposition:F_2k,g modular} verifies that all the coefficients in this expression have the correct modular properties, except for $-\frac12D^{(\ell)}_g\eta_g(\tau)$, which appears as the coefficient of $\phi_{-2,1}^{\ell-1}$. So the required result follows from Proposition \ref{prop:autfms-phiCFs}, as soon as we verify that $D^{(\ell)}_g\eta_g(\tau)$ is a modular form of weight $d=2(\ell-1)$ for some $\Gamma_0(N)$, but this follows from the definition of $D^{(\ell)}_g$. For it is apparent from (\ref{eqn:um:Dellg}) that $D^{(\ell)}_g$ can only by non-zero when the rank of $\LL^g$ is precisely $2d=4(\ell-1)$. On the other hand, if the Frame shape of $g$ takes the form $\pi_g=\prod_{m>0}m^{k_m}$ then $\rank(\LL^g)=\sum_{m>0}k_m$ which is exactly $\frac12$ times the weight of $\eta_g(\tau)=\prod_{m>0}\eta(m\tau)^{k_m}$. So $\eta_g(\tau)$ has weight $d$, or $D^{(\ell)}_g=0$. In either case $D^{(\ell)}_g\eta_g(\tau)\in M_{2(\ell-1)}(\Gamma_0(N))$ for some $N$. This completes the proof.
\end{proof}

Taking $g$ to be the identity in Theorem \ref{theorem:jacobi form} produces a Jacobi form $\phi^{(\ell)}_e$ of weight 0, index $\ell-1$, and level 1. This construction recovers the extremal weak Jacobi forms $Z^{(\ell)}$ of \cite{UM} for $\ell\in\{2,3,4,5,7\}$.
\begin{proposition}
If $d\in\{2,4,6,8,12\}$ then $\phi_e^{(\ell)}=\frac{d}{2}Z^{(\ell)}$.
\end{proposition}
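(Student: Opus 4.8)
The plan is to identify $\phi_e^{(\ell)}$ with a scalar multiple of the extremal weak Jacobi form $Z^{(\ell)}$ by matching their characterizing data. By Theorem~\ref{thm:um-wkjac} (taking $g=e$) the function $\phi_e^{(\ell)}$ is a weak Jacobi form of weight $0$, index $m:=\ell-1$, and level $1$, so both $\phi_e^{(\ell)}$ and $Z^{(\ell)}$ lie in the finite-dimensional space $J_{0,m}^{\mathrm{weak}}(\SL_2(\ZZ))$. This space is not one-dimensional for $m\geq 2$, so I cannot conclude proportionality from weight and index alone; the decisive extra input is that $Z^{(\ell)}$ is the \emph{extremal} (optimal growth) form of its weight and index, unique up to scale for $m\in\{1,2,3,4,6\}$ by the analysis of \cite{UM}. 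These are exactly the divisors of $12$ for which a weight-$0$ index-$m$ extremal form exists, which is precisely why the value $d=10$ (giving $m=5$) is excluded from the statement. The strategy therefore has three steps: (i) evaluate the explicit formula \eqref{eqn:phi_g^ell} at $g=e$; (ii) show that $\phi_e^{(\ell)}$ is itself extremal, so that $\phi_e^{(\ell)}=\kappa Z^{(\ell)}$ for some $\kappa\in\CC$; and (iii) determine $\kappa=d/2$.

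For step (i), at $g=e$ every eigenvalue $\lambda_i$ equals $1$, so $\eta_e(\tau)=\Delta(\tau)$ and $\eta_{-e}(\tau)=\Delta(2\tau)/\Delta(\tau)$ (cf.\ \eqref{eqn:dist-etag}), while $C_{-e}=2^{12}$ and, crucially, $D^{(\ell)}_e=0$, since $e$ fixes the orthogonal complement of $\Pi$ pointwise (cf.\ \eqref{eqn:um:Dellg}). Thus the $\vartheta_1^d$ term in \eqref{eqn:phi_g^ell} drops out, and $\phi_e^{(\ell)}$ is the natural $d=2m$ generalization of the expression \eqref{eqn:tg-K3EG} for $Z_{K3}$. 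In particular the $z=0$ specialization is transparent: since $y^{\Jo}$ acts as the identity at $z=0$, we have $\phi_e^{(\ell)}(\tau,0)=-T^s_{e,\tw}(\tau)$, and \eqref{eqn:dist-Tsgtwchig} with $C_e=0$ (equivalently Theorem~\ref{thm:dist-Tsgpm}) gives $T^s_{e,\tw}(\tau)=-\chi_e=-24$. Hence $\phi_e^{(\ell)}(\tau,0)=24$, independently of $\ell$. Comparing with the normalization value $Z^{(\ell)}(\tau,0)=24/(\ell-1)$ read off from \cite{UM} then forces $\kappa=24\big/\!\left(24/(\ell-1)\right)=\ell-1=d/2$, which is step (iii).

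The main obstacle is step (ii): verifying that $\phi_e^{(\ell)}$ has optimal growth, i.e.\ that its Fourier coefficients $c(n,r)$ vanish whenever $4mn-r^2<-1$. Because the elliptic transformation \eqref{eqn:autfms-jacxfmell} makes $c(n,r)$ depend only on $4mn-r^2$ and on $r\bmod 2m$, this is a finite check, and by weakness ($c(n,r)=0$ for $n<0$) it reduces to computing the $q^0$--coefficient of $\phi_e^{(\ell)}$ as a Laurent polynomial in $y$ and showing that only $y^{0}$ and $y^{\pm1}$ survive. Using Proposition~\ref{proposition:phi_g^ell and F_2k,g} with $D^{(\ell)}_e=0$, this $q^0$--coefficient equals $\tfrac12\sum_{j=0}^{m}(-1)^j\binom{m}{j}12^{j-m}F_{2j,e}(\infty)\,(y+10+y^{-1})^{m-j}(y-2+y^{-1})^j$, so the task is to show that the constant terms $F_{2j,e}(\infty)$ conspire to cancel the coefficients of $y^{\pm 2},\dots,y^{\pm m}$. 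I expect to establish this cancellation conceptually rather than case by case, by tracing it to the minimal pole structure available at $g=e$: the functions $t_{\pm e}(\tau)$ entering $F_{2j,e}$ are principal moduli for genus-zero groups (Theorem~\ref{thm:dist-Tsgpm}), hence carry the smallest possible polar parts, and this minimality should propagate through the Hecke-operator presentation used in the proof of Proposition~\ref{proposition:F_2k,g modular} to yield optimal growth for $\phi_e^{(\ell)}$. Once extremality is in hand, uniqueness of the extremal form of index $m$ upgrades the $z=0$ comparison to the full identity $\phi_e^{(\ell)}=\tfrac d2 Z^{(\ell)}$.
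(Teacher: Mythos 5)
Your route---identify $\phi_e^{(\ell)}$ abstractly as the unique extremal weak Jacobi form of its index, then fix the scalar at $z=0$---is genuinely different from the paper's treatment (the paper states this proposition without proof; its implicit justification is direct comparison of the explicit expansions from Propositions \ref{proposition:phi_g^ell} and \ref{proposition:phi_g^ell and F_2k,g}, with the data of Tables \ref{table:8space}--\ref{table:24space}, against the forms $Z^{(\ell)}$ of \cite{UM}). However, as written your proposal fails in exactly one of the five cases it must cover, namely $d=12$. Your claim that $D^{(\ell)}_e=0$ ``since $e$ fixes the orthogonal complement of $\Pi$ pointwise'' is only valid when that complement is nonzero: the product in (\ref{eqn:um:Dellg}) runs over $d+1\leq i\leq 12$, so for $d=12$ it is empty and $D^{(7)}_e=\pm1$ (equivalently, by the criterion stated after (\ref{eqn:um:Dellg}), $D^{(\ell)}_g$ vanishes iff $g$ fixes a nonzero vector of $\Pi^\perp\cap\LL$, and for $d=12$ that complement is zero). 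The paper records $D^{(7)}_{1A}=\pm1$ in Table \ref{table:24space}, and Table \ref{table:24spacecoin} shows that $\phi^{(7)}_e=6Z^{(7)}$ holds for the choice $D^{(7)}_e=-1$. So for $\ell=7$ the $\vartheta_1$-term does not drop out: it equals $-\tfrac12 D^{(7)}_e\,\Delta(\tau)\,\phi_{-2,1}(\tau,z)^{6}$, and with the value $0$ (or the wrong sign) your function is simply not a multiple of $Z^{(7)}$.

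The same case also defeats your reduction of extremality to the $q^0$-coefficient. For index $m=6$ the polar coefficients are not exhausted by the $c(0,r)$, $|r|\geq 2$: the coefficient $c(1,\pm6)$ has discriminant $4\cdot 6\cdot 1-36=-12<-1$, and since the elliptic transformations (\ref{eqn:autfms-jacxfmell}) preserve the discriminant, this class has no representative with $n=0$ (that would require $r^2=12$). This is precisely where the discarded term lives: $\Delta(\tau)\phi_{-2,1}(\tau,z)^6=q\,(y-2+y^{-1})^6+O(q^2)$, so it contributes $-\tfrac12 D^{(7)}_e$ to $c(1,\pm6)$, and the choice $D^{(7)}_e=-1$ is exactly what cancels this polar coefficient and makes $\phi^{(7)}_e$ extremal. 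Thus for $d=12$ you would be checking an insufficient list of polar coefficients on the wrong function. Beyond this, even for $d\in\{2,4,6,8\}$ your step (ii) is a hope rather than a proof: no mechanism is given by which the ``minimal pole structure'' of $t_{\pm e}$ forces the cancellation of $y^{\pm2},\dots,y^{\pm m}$ in the $q^0$-coefficient, and absent such an argument the honest completion is the same finite coefficient comparison with (\ref{eqn:phi_g^ell}) that the paper performs implicitly. (A smaller issue: your uniqueness argument requires $\phi_e^{(\ell)}$ to have level $1$, but Theorem \ref{thm:um-wkjac} only gives ``some level''; level $1$ for $g=e$ is asserted in the paper's prose, not proved by that theorem.)
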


Explicit expressions for the $\phi^{(\ell)}_g$ are recorded in Tables \ref{table:4space}-\ref{table:24space}. Coincidences with the weight zero weak Jacobi forms of umbral moonshine are recorded in Tables \ref{table:4spacecoin}-\ref{table:24spacecoin}. 

Note that $\phi^{(6)}_e$, corresponding to $d=10$, does not correspond to a weak Jacobi form arising in \cite{UM}. However, $\phi^{(6)}_e$ maps naturally to the meromorphic Jacobi form $\psi^X$ for $X=A_5^4D_4$ via the construction in \S 4.3 of \cite{UMNL}. Note that the $\ell$ for which $\phi^{(\ell)}_e$ recovers the weight 0 Jacobi form of umbral moonshine correspond to pure $A$-type root systems $X$. It is natural to ask if some modification of our methods can recover the $Z^{(\ell)}$ corresponding to the remaining pure $A$-type root systems (at $\ell=9, 13, 25$).

\section{Sigma Models}\label{sec:sigmod}

In this section we describe an isomorphism of graded vector spaces relating $\vsn$ and its canonically-twisted module $\vsnt$ to the vector spaces underlying the NS-NS and R-R sectors of an explicitly constructed super conformal field theory arising from a particular, distinguished supersymmetric non-linear K3 sigma model. This model was constructed by Wendland in \cite{Wen2002}. Its automorphism group is exceptionally large, as is demonstrated in \cite{2013arXiv1309.4127G}.

In preparation for a description of the relevant sigma model, let $\Gamma<\CC^2$ be a lattice of rank $4$ that spans $\CC^2$ (i.e. $\Gamma\simeq \ZZ^4$ and $\Gamma\otimes_\ZZ{\RR}=\CC^2$). Then the quotient $T=\CC^2/\Gamma$ is a complex $2$-torus. The {\em Kummer involution} of $T$ is the automorphism induced by the map $\kappa:x\mapsto -x$ on $\CC^2$. A minimal resolution $X\to T/\lab\kappa\rab$ of the quotient (there are $16$ points of $T$ fixed by $\kappa$) is a complex K3 surface and is projective exactly when $T$ is. 

We consider the special case that $\Gamma$ is the $D_4$ lattice (cf. (\ref{eqn:lat-Dn})). More precisely, write $V$ for $\CC^2$ regarded as a real vector space of dimension $4$. Equip $V$ with the symmetric $\RR$-bilinear form $\lab\;,\;\rab$, such that $e_1=(1,0)$, $e_2=(i,0)$, $e_3=(0,1)$ and $e_4=(0,i)$ form an orthonormal basis, and set
\begin{gather}\label{eqn:sigmod-Gamma}
\Gamma:=\left\{\sum_{i=1}^4 n_ie_i\mid n_i\in \ZZ,\,\sum n_i=0\mod{2}\right\}.
\end{gather}
Then $\Gamma$ is a copy of the $D_4$ root lattice in $V$.

Following \cite{2013arXiv1309.4127G}, the Neveu--Schwarz (NS) sector (or rather, the NS-NS sector) of the supersymmetric torus model attached to $T=V/\Gamma$ may be described as
\begin{gather}\label{eqn:sigmod:tormodnsns}
	\mathcal{H}_{T,\text{NS-NS}}=
	\bigoplus_{i\in\{0,1,\w,\wc\}}
	A(\Gamma\otimes_\ZZ{\CC})^{\mathcal{L}}\otimes V_{\Gamma+\gamma_i}^{\mathcal{L}}\otimes 
	A(\Gamma\otimes_\ZZ{\CC})^{\mathcal{R}}\otimes V_{\Gamma+\gamma_i}^{\mathcal{R}},
\end{gather}
when the $\gamma_i$ are chosen so that $\Gamma^*=\bigcup_{i\in\{0,1,\w,\wc\}}\Gamma+\gamma_i$ is the dual lattice to $\Gamma$. 
\begin{gather}\label{eqn:sigmod-Gammastar}
	\Gamma^*=
	\left\{\sum_{i=1}^4 n_ie_i\mid n_i\in \tfrac 12\ZZ,\,n_1=n_2=n_3=n_4 \mod 1\right\}.
\end{gather}
Note that $\Gamma^*/\Gamma$ is a copy of the Klein four-group. 

In (\ref{eqn:sigmod:tormodnsns}) we write $A(\Gamma\otimes_\ZZ\CC)$ for the Clifford module super vertex operator algebra attached to the ($4$ dimensional) complex vector space $\Gamma\otimes_\ZZ\CC$ via the construction recalled in \S\ref{sec:va}, we write $V_{\Gamma+\gamma_i}$ for the module over the lattice vertex operator algebra $V_{\Gamma}$ determined by the coset $\Gamma+\gamma_i$, and we use superscripts $\mathcal{L}$ and $\mathcal{R}$ to distinguish the {\em left-movers} and {\em right-movers}, respectively. The complex structure on $V$ arising from the identification $V=\CC^2$ reflects the choice of B-field made in \cite{2013arXiv1309.4127G}.
In the Ramond-Ramond sector we have
\begin{gather}\label{eqn:sigmod:tormodrr}
	\mathcal{H}_{T,\text{R-R}}=
	\bigoplus_{i\in\{0,1,\w,\wc\}}
	A(\Gamma\otimes_\ZZ\CC)_\tw^{\mathcal{L}}\otimes V_{\Gamma+\gamma_i}^{\mathcal{L}}\otimes A(\Gamma\otimes_\ZZ\CC)_\tw^{\mathcal{R}}\otimes V_{\Gamma+\gamma_i}^{\mathcal{R}}.
\end{gather}

In order to obtain the vector space underlying the minimal resolution $X\to T/\lab \kappa\rab$ we should construct the $\ZZ/2$-orbifold of $\mathcal{H}_{T}$ corresponding to a lift $\hat{\kappa}$ of $\kappa$ to $\Aut(\mathcal{H}_T)$, which means taking $\hat{\kappa}$-fixed points of $\mathcal{H}_T$ together with $\hat{\kappa}$-fixed points of a suitable $\hat{\kappa}$-twisted module for $\mathcal{H}_T$. This leads to
\begin{gather}
\begin{split}
	\mathcal{H}_{X,\text{NS-NS}}=
	&\bigoplus_{i\in\{0,1,\w,\wc\}}
	\left(A(\Gamma\otimes_\ZZ\CC)^{\mathcal{L}}\otimes V_{\Gamma+\gamma_i}^{\mathcal{L}}\otimes A(\Gamma\otimes_\ZZ\CC)^{\mathcal{R}}\otimes V_{\Gamma+\gamma_i}^{\mathcal{R}}\right)^+\\
		\oplus	&\bigoplus_{i\in\{0,1,\w,\wc\}}
	\left(A(\Gamma\otimes_\ZZ\CC)_\tw^{\mathcal{L}}\otimes V_{\Gamma+\gamma_i,\tw}^{\mathcal{L}}\otimes A(\Gamma\otimes_\ZZ\CC)_\tw^{\mathcal{R}}\otimes V_{\Gamma+\gamma_i,\tw}^{\mathcal{R}}\right)^+
\end{split}\\
\begin{split}
	\mathcal{H}_{X,\text{R-R}}=
	&\bigoplus_{i\in\{0,1,\w,\wc\}}
	\left(A(\Gamma\otimes_\ZZ\CC)_\tw^{\mathcal{L}}\otimes V_{\Gamma+\gamma_i}^{\mathcal{L}}\otimes A(\Gamma\otimes_\ZZ\CC)_\tw^{\mathcal{R}}\otimes V_{\Gamma+\gamma_i}^{\mathcal{R}}\right)^+\\
		\oplus	&\bigoplus_{i\in\{0,1,\w,\wc\}}
	\left(A(\Gamma\otimes_\ZZ\CC)^{\mathcal{L}}\otimes V_{\Gamma+\gamma_i,\tw}^{\mathcal{L}}\otimes A(\Gamma\otimes_\ZZ\CC)^{\mathcal{R}}\otimes V_{\Gamma+\gamma_i,\tw}^{\mathcal{R}}\right)^+
\end{split}
\end{gather}
for the NS-NS and R-R sectors of $\mathcal{H}_X$, where the $V_{\Gamma+\gamma_i,\tw}$ are certain twisted modules for $V_{\Gamma}$ and the superscript $+$ denotes $\hat{\kappa}$-fixed points 

At first glance it now appears that a detailed investigation of the structure of $\mathcal{H}_X$ will require a review of the construction of lattice vertex algebras and their twisted modules, but we will refrain from doing that here in favor of using an equivalent description in terms of Clifford modules. 

For this reformulation let $\e$ be a complex vector space of dimension $8$ equipped with a non-degenerate bilinear form. Then we have the Clifford module super vertex operator algebra $A(\e)$ and it's canonically-twisted module $A(\e)_\tw$ as described in \S\ref{sec:va}. According to the boson-fermion correspondence (see \cite{MR643037,MR1284796}, and also \cite{MR1123265} for the particular case of relevance here) we have an isomorphism of vertex operator algebras $A(\e)^0\simeq V_{\Gamma}$ which extends to isomorphisms between the irreducible $A(\e)^0$-modules and the $V_{\Gamma+\gamma_i}$. After relabeling the $\gamma_i$ if necessary, we may assume 
\begin{gather}
	\begin{split}
	A(\e)^0&\simeq V_{\Gamma+\gamma_0},\\
	A(\e)^1&\simeq V_{\Gamma+\gamma_1},\\
	A(\e)_\tw^0&\simeq V_{\Gamma+\gamma_{\w}},\\
	A(\e)_\tw^1&\simeq V_{\Gamma+\gamma_{\wc}}.	
	\end{split}
\end{gather}

We seek some resonance with the notation of \S6 of \cite{SM} for it will develop that the discussion there is very closely related to our present situation. So let us define 
\begin{gather}
	\begin{split}\label{eqn:sigmod-Ui}
	U_0&:=A(\e)^0,\\
	U_1&:=A(\e)^1,\\
	U_\w&:=A(\e)_\tw^0,\\
	U_{\wc}&:=A(\e)_\tw^1,
	\end{split}
\end{gather} 
so that $U_i\simeq V_{\Gamma+\gamma_i}$ as $U_0$-modules. Note the isomorphisms 
\begin{gather}
	A(\e)=U_0\oplus U_1
	\simeq A(\Gamma\otimes_\ZZ\CC)\otimes A(\Gamma\otimes_\ZZ\CC),\\
	A(\e)_\tw=U_{\w}\oplus U_{\wc}
	\simeq A(\Gamma\otimes_\ZZ\CC)_\tw\otimes A(\Gamma\otimes_\ZZ\CC)_\tw.
\end{gather}	
Thus we have isomorphisms 
\begin{gather}
\mathcal{H}_{T,\text{NS-NS}}\label{eqn:sigmod:HTNSNSU}
	\simeq
	\bigoplus_{i\in\{0,1,\w,\wc\}}
	(U_0\oplus U_1)\otimes U_i\otimes U_i,\\
\mathcal{H}_{T,\text{R-R}}\label{eqn:sigmod:HTRRU}
	\simeq
	\bigoplus_{i\in\{0,1,\w,\wc\}}
	(U_{\w} \oplus U_{\wc})\otimes U_i\otimes U_i,
\end{gather}
of $U_0\otimes U_0\otimes U_0$-modules, for the supersymmetric torus model attached to $T$. Now the $U_0\otimes U_0$-module structure on $\bigoplus_iU_i\otimes U_i$ naturally extends to a vertex operator algebra structure as is explained in detail in \cite{MR1123265}. In fact, this vertex operator algebra is isomorphic to the lattice vertex algebra $V_{L}$ for $L$ a copy of the $E_8$ lattice (cf. \S\ref{sec:lat}), and the vertex operator algebra isomorphism $V_L\simeq \bigoplus_i U_i\otimes U_i$ reflects the coincidence 
\begin{gather}
	L=\bigcup_i (\Gamma+\gamma_i)\oplus(\Gamma+\gamma_i),
\end{gather}
expressing the $E_8$ root lattice as a union of cosets for $D_4\oplus D_4$. 

Thus we may interpret (\ref{eqn:sigmod:HTNSNSU}) as an isomorphism of super vertex operator algebras, with each side isomorphic to $A(\e)\otimes V_L$, once we equip $\mathcal{H}_{T,\text{NS-NS}}$, as defined in (\ref{eqn:sigmod:tormodnsns}), with the {\em diagonal} Virasoro element 
\begin{gather}\label{eqn:sigmod-virdiag}
\omega^{\mathcal{D}}:=\omega^{\mathcal{L}}\otimes 1 +1\otimes \omega^{\mathcal{R}},
\end{gather} 
writing here $\omega^{\mathcal{L}}$ for the Virasoro element of $A(\Gamma\otimes_\ZZ\CC)^{\mathcal{L}}\otimes V_{\Gamma}^{\mathcal{L}}$, and similarly for $\omega^{\mathcal{R}}$. With this understanding we may regard (\ref{eqn:sigmod:HTRRU}) as an isomorphism of the corresponding canonically-twisted modules. 

Observe that $A(\e)\otimes V_L$ is precisely the super vertex operator algebra denoted $_{\CC}V_L^f$ in \cite{SM} (the symbols $V_L^f$ denote a real form of $_\CC V_L^f$) and used there to construct an $N=1$ super vertex operator algebra whose automorphism group is the largest simple Conway group, $\Co_1=\Co_0/\{ \pm\Id\}$ (cf. (\ref{eqn:lat-Co1})).  	

The construction of $\mathcal{H}_X$ involves a lift of the Kummer involution $\lambda\mapsto -\lambda$ from $L$ to $V_{L}$ but according to \cite{MR1123265} we may realize such an automorphism explicitly in the $U_0\otimes U_0$-module description as $1\otimes \theta$, where $\theta$ denotes the parity involution on $A(\e)\oplus A(\e)_\tw$, fixing $U_0$ and $U_{\w}$, and negating $U_1$ and $U_{\wc}$. Now we may replace $V_{\Gamma+\gamma_i,\tw}\otimes V_{\Gamma+\gamma_i,\tw}$ with $U_{i}\otimes U_{i+\w}$ in the description of $\mathcal{H}_{X}$, where $\{0,1,\w,\wc\}$ is equipped with the obvious $4$-group structure. Comparing with \cite{2013arXiv1309.4127G} we see that the orbifolding symmetry $\hat{\kappa}$, lifting the Kummer involution on $T$, should act as $\theta\otimes 1\otimes \theta$ on $\mathcal{H}_{T}$ and as $\theta\otimes \theta\otimes 1$ on its $\hat{\kappa}$-twisted module, and in this way we arrive at the isomorphisms
\begin{gather}
	\mathcal{H}_{X,\text{NS-NS}}\simeq U_{000}\oplus U_{0\w\w}\oplus U_{111}\oplus U_{1\wc\wc}\label{eqn:sigmod:HXNSNSU}
		\oplus U_{\w0\w}\oplus U_{\w\w0}\oplus U_{\wc1\wc}\oplus U_{\wc\wc1},\\
	\mathcal{H}_{X,\text{R-R}}\simeq U_{\w00}\oplus U_{\w\w\w}\oplus U_{\wc11}\oplus U_{\wc\wc\wc}\label{eqn:sigmod:HXRRU}
		\oplus U_{00\w}\oplus U_{0\w0}\oplus U_{11\wc}\oplus U_{1\wc1},
\end{gather}
where $U_{ijk}$ is a shorthand for $U_i\otimes U_j\otimes U_k$ (cf. (\ref{eqn:sigmod-Ui})).

Observe that the right hand side of (\ref{eqn:sigmod:HXNSNSU}) is precisely the $U_0\otimes U_0\otimes U_0$-module description given in (6.4.7) of \cite{SM} for the super vertex operator algebra denoted there by $_\CC V^{f\natural}$. A suitably chosen vector $\tau\in U_{111}$ equips $_\CC V^{f\natural}$ with a representation of the Neveu--Schwarz super Lie algebra with central charge $12$, and a main result of \cite{SM} is that the subgroup of $\Aut(_\CC V^{f\natural})$ composed of elements that fix $\tau$ is exactly $\Co_1$. 

We intend to use (\ref{eqn:sigmod:HXRRU}) to relate $\mathcal{H}_{X,\text{R-R}}$ to $\vsnt$. To this purpose, recall the $D_4$ {\em triality}, which, at the level of lattices, is the fact that $\Gamma^*$ (cf. (\ref{eqn:sigmod-Gammastar})) admits an automorphism of order $3$ that stabilizes the type $D_4$ sublattice $\Gamma$ (cf. (\ref{eqn:sigmod-Gamma})), and cyclically permutes its three non-trivial cosets $\Gamma+\gamma_i$. At the level of vertex operator algebras and their modules, this translates to the existence of an automorphism $\sigma$ of $U_0$, and invertible maps $\sigma:U_i\mapsto U_{\w i}$ for $i\in \{1,\w,\wc\}$, such that
\begin{gather}\label{eqn:sigmod-trialitymaps}
	\sigma Y(a,z)c=Y(\sigma a,z)\sigma c
\end{gather}
for $a\in U_0$ and $c\in U_i$, for $i\in \{1,\w,\wc\}$. See \cite{MR1123265} for full details on this. Since $\sigma$ must fix the Virasoro element of $U_0$, the identity (\ref{eqn:sigmod-trialitymaps}) implies that the maps $\sigma:U_i\mapsto U_{\w i}$ are isomorphisms of Virasoro modules. So in particular, the graded dimensions of the $U_i$ coincide, for $i\in \{1,\w,\wc\}$. 

Choosing a decomposition $\a=\e_1\oplus\e_2\oplus \e_3$, with each $\e_i$ a copy of $\CC^8$, non-degenerate with respect to the bilinear form on $\a$, leads to identifications 
\begin{gather}
A(\a)
=A(\e_1)\otimes A(\e_2)\otimes A(\e_3)\simeq \bigoplus_{i,j,k\in\{0,1\}}U_{ijk},\\
A(\a)_\tw
=A(\e_1)_\tw\otimes A(\e_2)_\tw\otimes A(\e_3)_\tw\simeq \bigoplus_{i,j,k\in\{\w,\wc\}}U_{ijk},
\end{gather}
where $U_{ijk}=U_i\otimes U_j\otimes U_k$, as in (\ref{eqn:sigmod:HXNSNSU}) and (\ref{eqn:sigmod:HXRRU}). Consequently, applying (\ref{eqn:dist-vsnvsnt}), we obtain isomorphisms of $U_{000}$-modules,
\begin{gather}
	\vsn\simeq U_{000}\oplus U_{011}\oplus U_{101}\oplus U_{110}\label{eqn:sigmod-vsnUijk}
		\oplus U_{\wc\w\w}\oplus U_{\w\wc\w}\oplus U_{\w\w\wc}\oplus U_{\wc\wc\wc},\\
	\vsnt\simeq U_{100}\oplus U_{010}\oplus U_{001}\oplus U_{111}\label{eqn:sigmod-vsntUijk}
		\oplus U_{\w\w\w}\oplus U_{\w\wc\wc}\oplus U_{\wc\w\wc}\oplus U_{\w\wc\wc}.
\end{gather}

Now consider the images of (\ref{eqn:sigmod-vsnUijk}) and (\ref{eqn:sigmod-vsnUijk}) under $\sigma\otimes\sigma\otimes\sigma$, where $\sigma$ denotes the triality maps of (\ref{eqn:sigmod-trialitymaps}). The result is the right hand sides of (\ref{eqn:sigmod:HXNSNSU}) and (\ref{eqn:sigmod:HXRRU}). 

Thus we have proven the following, final result of the paper.

\begin{proposition}\label{prop:sigmod-Virmodisom}
The distinguished super vertex operator algebra $\vsn$ is isomorphic to $\mathcal{H}_{X,\text{NS-NS}}$ as a Virasoro module, when the latter is equipped with the diagonal Virasoro element, $\omega^{\mathcal{D}}$. Similarly, $\vsnt$ is isomorphic as a Virasoro module to $\mathcal{H}_{X,\text{R-R}}$.
\end{proposition}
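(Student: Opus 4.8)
The plan is to produce both isomorphisms at once, realizing them as the triple tensor power $\sigma\otimes\sigma\otimes\sigma$ of the $D_4$ triality maps of (\ref{eqn:sigmod-trialitymaps}). First I would take as given the two decompositions (\ref{eqn:sigmod-vsnUijk}) and (\ref{eqn:sigmod-vsntUijk}) of $\vsn$ and $\vsnt$ into summands $U_{ijk}=U_i\otimes U_j\otimes U_k$, which follow from the definition (\ref{eqn:dist-vsnvsnt}) together with the factorization $\a=\e_1\oplus\e_2\oplus\e_3$ and the Clifford-module identifications $A(\a)\simeq\bigoplus_{i,j,k\in\{0,1\}}U_{ijk}$, $A(\a)_\tw\simeq\bigoplus_{i,j,k\in\{\w,\wc\}}U_{ijk}$ recalled above. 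In parallel I would take the decompositions (\ref{eqn:sigmod:HXNSNSU}) and (\ref{eqn:sigmod:HXRRU}) of $\mathcal{H}_{X,\text{NS-NS}}$ and $\mathcal{H}_{X,\text{R-R}}$ into summands of the same form, which were obtained from the orbifold construction via the boson--fermion correspondence.

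Next I would use that $\sigma$ fixes the (canonical) Virasoro element of $U_0$, so that each intertwiner $\sigma\colon U_i\to U_{\w i}$ of (\ref{eqn:sigmod-trialitymaps}) is an isomorphism of $U_0$-modules, hence of Virasoro modules. It follows that $\sigma\otimes\sigma\otimes\sigma$ intertwines the action of $U_0\otimes U_0\otimes U_0$ on source and target, and in particular commutes with the sum $\omega_{U_0}\otimes 1\otimes 1+1\otimes\omega_{U_0}\otimes 1+1\otimes 1\otimes\omega_{U_0}$ of the three Virasoro elements. I would then check, summand by summand, that $\sigma\otimes\sigma\otimes\sigma$ carries the eight summands on the right of (\ref{eqn:sigmod-vsnUijk}) bijectively onto the eight summands on the right of (\ref{eqn:sigmod:HXNSNSU}), and likewise that it sends (\ref{eqn:sigmod-vsntUijk}) onto (\ref{eqn:sigmod:HXRRU}); this is the short combinatorial verification already indicated in the paragraph preceding the statement.

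The one point that requires genuine care, and which I expect to be the main obstacle, is reconciling the two a priori different gradings. On the $\vsn$ side the Virasoro action is that of $A(\a)$, which under $\a=\e_1\oplus\e_2\oplus\e_3$ splits as $\omega_{A(\e_1)}+\omega_{A(\e_2)}+\omega_{A(\e_3)}$ and therefore, in the $U_{ijk}$ picture, as exactly the sum of the three copies of $\omega_{U_0}$ appearing above. On the $\mathcal{H}_X$ side the relevant element is the \emph{diagonal} Virasoro $\omega^{\mathcal{D}}=\omega^{\mathcal{L}}\otimes 1+1\otimes\omega^{\mathcal{R}}$ of (\ref{eqn:sigmod-virdiag}); I would unwind the boson--fermion identifications to see that these agree. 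Indeed, the first tensor factor $U_0\simeq A(\e)^0$ absorbs both the left- and right-moving fermions, so that its Virasoro element equals $\omega^{\mathcal{L}}_{A(\Gamma\otimes_\ZZ\CC)}+\omega^{\mathcal{R}}_{A(\Gamma\otimes_\ZZ\CC)}$, while the second and third factors carry $\omega^{\mathcal{L}}_{V_\Gamma}$ and $\omega^{\mathcal{R}}_{V_\Gamma}$ respectively; summing the three contributions recovers $\omega^{\mathcal{L}}+\omega^{\mathcal{R}}=\omega^{\mathcal{D}}$. With this identification both sides carry the same sum-of-three-$\omega_{U_0}$ action in the $U_{ijk}$ picture, so the maps $\sigma\otimes\sigma\otimes\sigma$ constructed above are the desired Virasoro-module isomorphisms. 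Finally I would remark that, since the triality maps only preserve the Virasoro element of $U_0$ and genuinely permute the modules $U_i$ rather than respecting the full vertex operator structure of $\bigoplus_{ijk}U_{ijk}$, the conclusion is necessarily stated at the level of Virasoro modules only.
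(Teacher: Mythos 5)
Your proposal is correct and follows essentially the same route as the paper: both use the decompositions of $\vsn$, $\vsnt$, $\mathcal{H}_{X,\text{NS-NS}}$ and $\mathcal{H}_{X,\text{R-R}}$ into summands $U_{ijk}$ and then apply the triality map $\sigma\otimes\sigma\otimes\sigma$, which is Virasoro-equivariant because $\sigma$ fixes the Virasoro element of $U_0$. Your explicit reconciliation of the diagonal Virasoro element $\omega^{\mathcal{D}}$ with the sum of the three $U_0$ Virasoro elements is handled more implicitly in the paper (where the identification of $\mathcal{H}_{T,\text{NS-NS}}$ with $A(\e)\otimes V_L$ is declared a super vertex operator algebra isomorphism once $\omega^{\mathcal{D}}$ is chosen), but it is the same argument.
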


\section*{Acknowledgement}

We are grateful to Miranda Cheng, Thomas Creutzig, Xi Dong, Tohru Eguchi, Igor Frenkel, Matthias Gaberdiel, Terry Gannon, Sarah Harrison, Jeff Harvey, Yi-Zhi Huang, Gerald Hoehn, Shamit Kachru, Ching Hung Lam, Atsushi Matsuo, Nils Scheithauer, Yuji Tachikawa, Roberto Volpato, Katrin Wendland and Timm Wrase for helpful discussions on related topics. We are particularly grateful to Miranda Cheng, Jeff Harvey, Shamit Kachru, Katrin Wendland and the anonymous referees, for comments on an earlier draft. The first author gratefully acknowledges support from the U.S. National Science Foundation (DMS 1203162), and from the Simons Foundation (\#316779). The first author thanks the University of Tokyo for hospitality during the completion of this project.

\appendix


\clearpage

\section{Computations}\label{sec:tables-comps}

Table \ref{table:4space} records all the necessary information to compute $\phi_g$ and $F_g$ for all conjugacy classes of $\Co_0$ fixing a rank 4 sublattice of the Leech lattice, including, in particular, the Frame shapes $\pi_{\pm g}$ and the traces $C_{-g}$ and $D_{g}$. The trace $\chi_g$ can be read off from the Frame shape $\pi_g$ as the exponent of $1$, and the rank $\rk\LL^g$ of the sublattice fixed by $g$ is the sum of the exponents (counting signs) in $\pi_g$.

Also recorded in Table \ref{table:4space} are the invariance groups $\Gamma_{\pm g}$ of the $t_{\pm g}$ arising in Proposition \ref{proposition:F_g modular}. Our notation for the $\Gamma_{g}$ is the same as in \cite{vacogm}, and may be described as follows.  We follow the conventions of \cite{MR1291027}, so that when $h$ is the largest divisor of $24$ such that $h^2$ divides $nh$, the symbol $n|h-$ denotes the subgroup of index $h$ in $\Gamma_0(n/h)$ defined in \cite{MR554399}. (See \cite{Fer_Genus0prob} for an analysis of the groups $n|h-$, and their extensions by Atkin--Lehner involutions.) So $12+3$, for example, denotes the group obtained by adjoining an Atkin--Lehner involution $W_3=\frac{1}{\sqrt{3}}\left(\begin{smallmatrix} 3a&b\\12c&3d\end{smallmatrix}\right)$ to $\Gamma_0(12)$, where $9ad-12bc=3$. In addition to this, we use $\up\tfrac1h$ and $n\dn$ to denote upper and lower triangular matrices, respectively, 
\begin{gather}
\up\tfrac1h :=\left(\begin{matrix} 1&\tfrac1h\\0&1\end{matrix}\right),\quad
n\dn :=\left(\begin{matrix} 1&0\\n&1\end{matrix}\right).
\end{gather}

We then write $12+3\up\tfrac12$, for example, for the group generated by $\Gamma_0(12)$ and the product of $W_3$ with $\up\tfrac12$, where $W_3$ is an Atkin--Lehner involution for $\Gamma_0(12)$, as in the previous paragraph.

Note that $\Gamma_g$ and $\Gamma_{-g}$ are related by conjugation by $\up\tfrac12$, for every $g\in \Co_0$.

The data in tables \ref{table:8space} through \ref{table:24space} enable the computation of $\phi^{(\ell)}_g$, for $\ell\in\{3,4,5,7\}$, where the relevant conjugacy classes in $\Co_0$ are those fixing sublattices of the Leech lattice with rank at least $8$, $12$, $16$, or $24$, respectively.

\def\arraystretch{1.3}

\begin{center}
\begingroup
\begin{longtable}{rrccrrll}
\caption{Data for the computation of $\phi_g=\phi_g^{(2)}$}
\label{table:4space} \\

\multicolumn{1}{r}{$\Co_0$} & 
\multicolumn{1}{r}{$\Co_1$} & 
\multicolumn{1}{c}{$\pi_g$} & 
\multicolumn{1}{c}{$\pi_{-g}$} & 
\multicolumn{1}{r}{$C_{-g}$} & 
\multicolumn{1}{r}{$D_{g}$} & 
\multicolumn{1}{l}{$\Gamma_{g}$} & 
\multicolumn{1}{l}{$\Gamma_{-g}$} \\ \hline
\endfirsthead

\multicolumn{8}{c}
{{\tablename\ \thetable{}, continued from previous page}} \\
\multicolumn{1}{r}{$\Co_0$} & 
\multicolumn{1}{r}{$\Co_1$} & 
\multicolumn{1}{c}{$\pi_g$} & 
\multicolumn{1}{c}{$\pi_{-g}$} & 
\multicolumn{1}{r}{$C_{-g}$} & 
\multicolumn{1}{r}{$D_{g}$} & 
\multicolumn{1}{l}{$\Gamma_{g}$} & 
\multicolumn{1}{l}{$\Gamma_{-g}$} \\ \hline
\endhead

\hline \multicolumn{8}{r}{{Continued on next page}} \\
\endfoot

\hline
\endlastfoot

1A & 1A & $1^{24}$ & $\frac{2^{24}}{1^{24}}$ & $4096$ & 0 & $2-$ & $4+$ \\

2B & 2A & $1^8 2^8$ & $\frac{2^{16}}{1^8}$ & 0 & 0 & $4-$ & $4-$ \\
2C & 2A & $\frac{2^{16}}{1^8}$ & $1^8 2^8$ & 0 & 0 & $4-$ & $4-$ \\

2D & 2C & $2^{12}$ & $2^{12}$ & 0 & 0 & $4|2-$ & $4|2-$ \\

3B & 3B & $1^6 3^6$ & $\frac{2^6 6^6}{1^6 3^6}$ & $64$ & 0 & $6+3$ & $12+$ \\

3C & 3C & $\frac{3^9}{1^3}$ & $\frac{1^3 6^9}{2^3 3^9}$ & $-8$ & 0 & $6-$ & $12+4$ \\

3D & 3D & $3^8$ & $\frac{6^8}{3^8}$ & 16 & 0 & $6|3$ & $12|3+$ \\

4B & 4A & $\frac{1^8 4^8}{2^8}$ & $\frac{4^8}{1^8}$ & $256$ & 0 & $(8+)^{\up\tfrac12}$ & $8+$ \\

4D & 4B & $\frac{4^8}{2^4}$ & $\frac{4^8}{2^4}$ & 0 & $\pm 64$ & $8-$ & $8-$ \\

4E & 4C & $1^4 2^2 4^4$ & $\frac{2^6 4^4}{1^4}$ & 0 & 0 & $8-$ & $8-$ \\
4F & 4C & $\frac{2^6 4^4}{1^4}$ & $1^4 2^2 4^4$ & 0 & 0 & $8-$ & $8-$ \\

4G & 4D & $2^4 4^4$ & $2^4 4^4$ & 0 & 0 & $8|2-$ & $8|2-$ \\

4H & 4F & $4^6$ & $4^6$ & 0 & 0 & $8|4-$ & $8|4-$ \\

5B & 5B & $1^4 5^4$ & $\frac{2^4 10^4}{1^4 5^4}$ & $16$ & 0 & $10+5$ & $20+$ \\

5C & 5C & $\frac{5^5}{1^1}$ & $\frac{1^1 10^5}{2^1 5^5}$ & $-4$ & $\pm25\sqrt{5}$ & $10-$ & $20+4$ \\

6G & 6C & $\frac{2^5 3^4 6^1 }{1^4}$ & $\frac{1^4 2^1 6^5}{3^4}$ & 0 & 0 & $12+3\up\tfrac12$ & $12+3\up\tfrac12$ \\
6H & 6C & $\frac{1^4 2^1 6^5}{3^4}$ & $\frac{2^5 3^4 6^1}{1^4}$ & 0 & 0 & $12+3\up\tfrac12$ & $12+3\up\tfrac12$ \\

6I & 6D & $\frac{1^5 3^1 6^4}{2^4}$ & $\frac{2^1 6^5}{1^5 3^1}$ & $72$ & 0 & $(12+12)^{\up\tfrac12}$ & $12+12$ \\

6K & 6E & $1^2 2^2 3^2 6^2$ & $\frac{2^4 6^4}{1^2 3^2}$ & 0 & 0 & $12+3$ & $12+3$ \\
6L & 6E & $\frac{2^4 6^4}{1^2 3^2}$ & $1^2 2^2 3^2 6^2$ & 0 & $\pm48$ & $12+3$ & $12+3$ \\

6M & 6F & $\frac{3^3 6^3}{1^1 2^1}$ & $\frac{1^1 6^6}{2^2 3^3}$ & 0 & $\pm54$ & $12-$ & $12-$ \\

6O & 6G & $2^3 6^3$ & $2^3 6^3$ & 0 & 0 & $12\vert2+3\up\tfrac12$ & $12\vert2+3\up\tfrac12$ \\

6P & 6I & $6^4$ & $6^4$ & 0 & $\pm36$ & $12|6-$ & $12|6-$ \\

7B & 7B & $1^3 7^3$ & $\frac{2^3 14^3}{1^3 7^3}$ & $8$ & 0 & $14+7$ & $28+$ \\

8C & 8B & $\frac{2^4 8^4}{4^4}$ & $\frac{2^4 8^4}{4^4}$ & 0 & $\pm16$ & $(16\vert2+)^{\up\tfrac14}$ & $(16\vert2+)^{\up\tfrac14}$ \\

8D & 8C & $\frac{1^4 8^4}{2^2 4^2}$ & $\frac{2^2 8^4}{1^4 4^2}$ & $32$ & $\pm8$ & $(16+)^{\up\tfrac12}$ & $16+$ \\

8G & 8E & $1^2 2^1 4^1 8^2$ & $\frac{2^3 4^1 8^2}{1^2}$ & 0 & 0 & $16-$ & $16-$ \\
8H & 8E & $\frac{2^34^1 8^2}{1^2}$ & $1^2 2^1 4^1 8^2$ & 0 & $\pm32\sqrt{2}$& $16-$ & $16-$ \\

8I & 8F & $4^2 8^2$ & $4^2 8^2$ & 0 & $\pm 16$ & $16\vert4-$ & $16\vert4-$ \\

9C & 9C & $\frac{1^3 9^3}{3^2}$ & $\frac{2^3 3^2 18^3}{1^3 6^2 9^3}$ & $4$ & $\pm9$ & $18+9$ & $36+$ \\

10F & 10D & $\frac{2^3 5^2 10^1 }{1^2}$ & $\frac{1^2 2^1 10^3}{5^2}$ & 0 & $\pm20\sqrt{5}$ & $20+5\up\tfrac12$ & $20+5\up\tfrac12$ \\
10G & 10D & $\frac{1^2 2^1 10^3}{5^2}$ & $\frac{2^3 5^2 10^1 }{1^2}$ & 0 & $\pm4\sqrt{5}$ & $20+5\up\tfrac12$ & $20+5\up\tfrac12$ \\

10H & 10E & $\frac{1^3 5^1 10^2}{2^2}$ & $\frac{2^1 10^3}{1^3 5^1}$ & 20 & $\pm5\sqrt{5}$ & $(20+20)^{\up\tfrac12}$ & $20+20$ \\

10J & 10F & $2^2 10^2$ & $2^2 10^2$ & 0 & $\pm20$ & $20\vert2+5$ & $20\vert2+5$ \\

11A & 11A & $1^2 11^2$ & $\frac{2^2 22^2}{1^2 11^2}$ & $4$ & $\pm11$ & $22+11$ & $44+$ \\

12I & 12E & $\frac{1^2 3^2 4^2 12^2}{2^2 6^2}$ & $\frac{4^2 12^2}{1^2 3^2}$ & $16$ & $\pm12$ & $(24+)^{\up\tfrac12}$ & $24+$ \\

12L & 12H & $\frac{1^1 2^2 3^1 12^2}{4^2}$ & $\frac{2^3 6^1 12^2}{1^1 3^1 4^2}$ & 0 & $\pm6\sqrt{3}$ & $(24\vert2+12)^{\up\tfrac14}$ & $(24\vert2+12)^{\up\tfrac14}$ \\

12N & 12I & $\frac{2^2 3^2 4^1 12^1}{1^2}$ & $\frac{1^2 4^1 6^2 12^1}{3^2}$ & 0 & $\pm 24\sqrt{3}$ & $24+3\up\tfrac12$ & $24+3\up\tfrac12$ \\
12O & 12I & $\frac{1^2 4^1 6^2 12^1}{3^2}$ & $\frac{2^2 3^2 4^1 12^1}{1^2}$ & 0 & $\pm 8\sqrt{3}$ & $24+3\up\tfrac12$ & $24+3\up\tfrac12$ \\

12P & 12J & $2^1 4^1 6^1 12^1$ & $2^1 4^1 6^1 12^1$ & 0 & $\pm24$ & $24\vert2+3$ & $24\vert2+3$ \\

14C & 14B & $1^1 2^1 7^1 14^1$ & $\frac{2^2 14^2}{1^1 7^1}$ & 0 & $\pm14$ & $28+7$ & $28+7$ \\

15D & 15D & $1^1 3^1 5^1 15^1$ & $\frac{2^1 6^1 10^1 30^1}{1^1 3^1 5^1 15^1}$ & $4$ & $\pm15$ & $30+3,5,15$ & $60+$ \\

\end{longtable}
\end{center}
\endgroup

\begin{table}[ht]
\centering
\caption{Data for the computation of $\phi^{(3)}_g$}
\label{table:8space}
\def\arraystretch{1.3}
\begin{tabular}{cccccccc}

$\Co_0$ & $\Co_1$ & $\pi_{g}$ & $\pi_{-g}$ & $C_{-g}$ & $D^{(3)}_{g}$ & $\Gamma_g$ & $\Gamma_{-g}$ \\

\hline

1A & 1A & $1^{24}$ & $\frac{2^{24}}{1^{24}}$ & $4096$ & 0 & $2-$ & $4+$ \\
2B & 2A & $1^8 2^8$ & $\frac{2^{16}}{1^8}$ & 0 & 0 & $4-$ & $4-$ \\
2C & 2A & $\frac{2^{16}}{1^8}$ & $1^8 2^8$ & 0 & $\pm256$ & $4-$ & $4-$ \\
2D & 2C & $2^{12}$ & $2^{12}$ & 0 & 0 & $4|2-$ & $4|2-$ \\
3B & 3B & $1^6 3^6$ & $\frac{2^6 6^6}{1^6 3^6}$ & 64 & 0 & $6+3$ & $12+$ \\
3D & 3D & $3^8$ & $\frac{6^8}{3^8}$ & 16 & $\pm81$ & $6|3$ & $12|3+$ \\
4B & 4A & $\frac{1^8 4^8}{2^8}$ & $\frac{4^8}{1^8}$ & 256 & $\pm16$ & $(8+)^{\up\tfrac12}$ & $8+$ \\
4E & 4C & $1^4 2^2 4^4$ & $\frac{2^6 4^4}{1^4}$ & 0 & 0 & $8-$ & $8-$ \\
4G & 4D & $2^4 4^4$ & $2^4 4^4$ & 0 & $\pm64$ & $8|2-$ & $8|2-$ \\
5B & 5B & $1^4 5^4$ & $\frac{2^4 10^4}{1^4 5^4}$ & 16 & $\pm25$ & $10+5$ & $20+$ \\
6K & 6E & $1^2 2^2 3^2 6^2$ & $\frac{2^4 6^4}{1^2 3^2}$ & 0 & $\pm36$ & $12+3$ & $12+3$ \\

\hline
\end{tabular}
\end{table}

\begin{table}
\centering
\caption{Data for the computation of $\phi^{(4)}_g$}
\label{table:12space}
\def\arraystretch{1.3}
\begin{tabular}{ccccccccp{5.25cm}}

$\Co_0$ & $\Co_1$ & $\pi_{g}$ & $\pi_{-g}$ & $C_{-g}$ & $D^{(4)}_{g}$ & $\Gamma_g$ & $\Gamma_{-g}$ \\

\hline

1A & 1A & $1^{24}$ & $\frac{2^{24}}{1^{24}}$ & $4096$ & 0 & $2-$ & $4+$ \\
2B & 2A & $1^8 2^8$ & $\frac{2^{16}}{1^8}$ & 0 & 0 & $4-$ & $4-$ \\
2D & 2C & $2^{12}$ & $2^{12}$ & 0 & $\pm64$ & $4|2-$ & $4|2-$ \\
3B & 3B & $1^6 3^6$ & $\frac{2^6 6^6}{1^6 3^6}$ & 64 & $\pm27$ & $6+3$ & $12+$ \\

\hline
\end{tabular}
\end{table}

\begin{table}
\centering
\caption{Data for the computation of $\phi^{(5)}_g$}
\label{table:16space}
\def\arraystretch{1.3}
\begin{tabular}{ccccccccc}

$\Co_0$ & $\Co_1$ & $\pi_{g}$ & $\pi_{-g}$ & $C_{-g}$ & $D^{(5)}_{g}$ & $\Gamma_g$ & $\Gamma_{-g}$ \\

\hline

1A & 1A & $1^{24}$ & $\frac{2^{24}}{1^{24}}$ & $4096$ & 0 & $2-$ & $4+$ \\
2B & 2A & $1^8 2^8$ & $\frac{2^{16}}{1^8}$ & 0 & $\pm16$ & $4-$ & $4-$ \\

\hline
\end{tabular}
\end{table}
 
\begin{table}
\centering
\caption{Data for the computation of $\phi^{(7)}_g$}
\label{table:24space}
\def\arraystretch{1.3}
\begin{tabular}{ccccccccc}

$\Co_0$ & $\Co_1$ & $\pi_{g}$ & $\pi_{-g}$ & $C_{-g}$ & $D^{(7)}_{g}$ & $\Gamma_g$ & $\Gamma_{-g}$ \\

\hline

1A & 1A & $1^{24}$ & $\frac{2^{24}}{1^{24}}$ & $4096$ & $\pm1$ & $2-$ & $4+$ \\

\hline
\end{tabular}
\end{table}

\clearpage

\section{Coincidences}\label{sec:tables-coins}

Table \ref{table:4spacecoin} records instances in which $\phi_g$ coincides with (or is a simple linear combination of) weight zero (weak) Jacobi forms $Z^{(2)}_g$ attached to elements $g\in M_{24}$ by Mathieu moonshine, being the $\ell=2$ case of umbral moonshine (cf. \cite{UM,UMNL}). The functions $Z^{(2)}_g$ are as defined in \cite{UM}. We also indicate when $\phi_g$ recovers one of the twined K3 sigma model elliptic genera that is computed explicitly in \cite{GHV}. The notations $\phi_{nZ}$ and $\hat{\phi}_{nz}$ are as in \cite{GHV}. Since there is a choice, we specify which $D_{g}$ produces the function in question. Observe that every twined K3 sigma model elliptic genus appearing in \cite{GHV} appears also in Table \ref{table:4spacecoin}.

Tables \ref{table:8spacecoin} through \ref{table:24spacecoin} present coincidences between the $\phi^{(\ell)}_g$ and the functions $Z^{(\ell)}_g$ of umbral moonshine, for $\ell\in\{3,4,5,7\}$. As in the case of $\ell=2$, the functions $Z^{(\ell)}_g$ are as defined in \cite{UM}.

\def\arraystretch{1.3}

\begin{center}
\begingroup
\begin{longtable}{rrrcc}
\caption{Coincidences with Mathieu moonshine and sigma model twining genera}
\label{table:4spacecoin} \\

\multicolumn{1}{r}{$\Co_0$} & 
\multicolumn{1}{r}{$\Co_1$} & 
\multicolumn{1}{r}{$D_{g}$} & 
\multicolumn{1}{c}{$\phi_g$} & 
\multicolumn{1}{c}{$N_g$} \\ \hline
\endfirsthead

\multicolumn{5}{c}
{{\tablename\ \thetable{}, continued from previous page}} \\
\multicolumn{1}{r}{$\Co_0$} & 
\multicolumn{1}{r}{$\Co_1$} & 
\multicolumn{1}{r}{$D_{g}$} & 
\multicolumn{1}{c}{$\phi_g$} & 
\multicolumn{1}{c}{$N_g$} \\ \hline
\endhead

\hline \multicolumn{5}{r}{{Continued on next page}} \\
\endfoot

\hline
\endlastfoot

1A & 1A & 0 & $Z^{(2)}_{1A}=\phi_{1A}$ & 1 \\
2B & 2A & 0 & $Z^{(2)}_{2A}=\phi_{2A}$ & 2 \\
2C & 2A & 0 & $-Z^{(2)}_{1A}+2Z^{(2)}_{2A}=\phi_{\mathcal{Q}}$ & 2 \\
2D & 2C & 0 & $Z^{(2)}_{2B}=\phi_{2B}$ & 4 \\
3B & 3B & 0 & $Z^{(2)}_{3A}=\phi_{3A}$ & 3 \\
3C & 3C & 0 & $-\frac{1}{2}Z^{(2)}_{1A}+\frac{3}{2}Z^{(2)}_{3A}=\hat{\phi}_{3a}$ & 3 \\
4B & 4A & 0 & $Z^{(2)}_{2A}=\phi_{2A}$ & 2 \\
4D & 4B & $64$ & $Z^{(2)}_{2B}=\phi_{2B}$ & 4 \\
4D & 4B & $-64$ & $-\frac{1}{2}Z^{(2)}_{1A}+\frac{3}{2}Z^{(2)}_{2A}$ & 2 \\
4E & 4C & 0 & $Z^{(2)}_{4B}=\phi_{4B}$ & 4 \\
4F & 4C & 0 & $-\frac{1}{2}Z^{(2)}_{1A}+\frac{1}{2}Z^{(2)}_{2A}+Z^{(2)}_{4B}=\hat{\phi}_{4a}$ & 4 \\
4G & 4D & 0 & $Z^{(2)}_{4A}=\phi_{4A}$ & 8 \\
5B & 5B & 0 & $Z^{(2)}_{5A}=\phi_{5A}$ & 5 \\
6G & 6C & 0 & $-\frac{1}{2}Z^{(2)}_{1A}+\frac{1}{2}Z^{(2)}_{2A}+\frac{1}{2}Z^{(2)}_{3A}+\frac{1}{2}Z^{(2)}_{6A}$ & 6 \\
6H & 6C & 0 & $\frac{1}{2}Z^{(2)}_{3A}+\frac{1}{2}Z^{(2)}_{6A}$ & 6 \\
6I & 6D & 0 & $\frac{1}{2}Z^{(2)}_{2A}+\frac{1}{2}Z^{(2)}_{6A}=\hat{\phi}_{6a}$ & 6 \\
6K & 6E & 0 & $Z^{(2)}_{6A}=\phi_{6A}$ & 6 \\
6L & 6E & $48$ & $-Z^{(2)}_{3A}+2Z^{(2)}_{6A}$ & 6 \\
6L & 6E & $-48$ & $-\frac{1}{2}Z^{(2)}_{1A}+\frac{1}{2}Z^{(2)}_{2A}+Z^{(2)}_{3A}$ & 6 \\
6M & 6F & $54$ & $-\frac{1}{2}Z^{(2)}_{2A}+\frac{3}{2}Z^{(2)}_{6A}$ & 6 \\
6M & 6F & $-54$ & $-\frac{1}{2}Z^{(2)}_{1A}+Z^{(2)}_{2A}+\frac{1}{2}Z^{(2)}_{3A}$ & 6 \\
7B & 7B & 0 & $Z^{(2)}_{7AB}$ & 7 \\
8C & 8B & 16 & $Z^{(2)}_{4C}=\phi_{4C}$ & 16 \\
8D & 8C & $8$ & $Z^{(2)}_{4B}=\phi_{4B}$& 4 \\
8D & 8C & $-8$ & $\frac{1}{2}Z^{(2)}_{2A}+\frac{1}{2}Z^{(2)}_{4A}$ & 8 \\
8G & 8E & 0 & $Z^{(2)}_{8A}=\phi_{8A}$ & 8 \\
9C & 9C & $9$ & $\frac{1}{2}Z^{(2)}_{3A}+\frac{1}{2}Z^{(2)}_{3B}=\hat{\phi}_{9a}$ & 9 \\
9C & 9C & $-9$ & $\hat{\phi}_{9b}$ & 9 \\
10J & 10F & 20 & $Z^{(2)}_{10A}$ & 20 \\
11A & 11A & $11$ & $Z^{(2)}_{11A}$ & 11 \\
12I & 12E & $12$ & $Z^{(2)}_{6A}=\phi_{6A}$ & 6 \\
12P & 12J & 24 & $Z^{(2)}_{12A}$ & 24 \\
14C & 14B & $14$ & $Z^{(2)}_{14AB}$ & 14 \\
15D & 15D & $15$ & $Z^{(2)}_{15AB}$ & 15 \\

\end{longtable}
\end{center}
\endgroup

\clearpage

\begin{table}
\centering
\caption{Coincidences with umbral moonshine at $\ell=3$}
\label{table:8spacecoin}
\def\arraystretch{1.3}
\begin{tabular}{cccccccp{4cm}}

$\Co_0$&$\Co_1$ & $D^{(3)}_{g}$ & $\phi^{(3)}_g$ & $N_g$ \\

\hline

1A & 1A & 0 & $2Z^{(3)}_{1A}$ \\
2B & 2A & 0 & $2Z^{(3)}_{2B}$ \\
2C & 2A & $-256$ & $-2Z^{(3)}_{2A}+4Z^{(3)}_{2B}$ \\
3B & 3B & 0 & $2Z^{(3)}_{3A}$ \\
3D & 3D & $-81$ & $2Z^{(3)}_{3B}$ \\
4B & 4A & $-16$ & $2Z^{(3)}_{2B}$ \\
4G & 4D & $-64$ & $2Z^{(3)}_{4B}$ \\
5B & 5B & $-25$ & $2Z^{(3)}_{5A}$ \\
6K & 6E & $-36$ & $2Z^{(3)}_{6C}$ \\

\hline
\end{tabular}
\end{table}

\begin{table}
\centering
\caption{Coincidences with umbral moonshine at $\ell=4$}
\label{table:12spacecoin}
\def\arraystretch{1.3}
\begin{tabular}{cccccccp{5.25cm}}

$\Co_0$ & $\Co_1$ & $D^{(4)}_{g}$ & $\phi^{(4)}_g$ & $N_g$ \\

\hline

1A & 1A & 0 & $3Z^{(4)}_{1A}$ \\
2D & 2C & 64 & $3Z^{(4)}_{2B}$ \\
3B & 3B & 27 & $3Z^{(4)}_{3A}$ \\

\hline
\end{tabular}
\end{table}

\begin{table}
\centering
\caption{Coincidences with umbral moonshine at $\ell=5$}
\label{table:16spacecoin}
\def\arraystretch{1.3}
\begin{tabular}{cccccccc}

$\Co_0$ & $\Co_1$ & $D^{(5)}_{g}$ & $\phi^{(5)}_g$ & $N_g$ \\

\hline

1A & 1A & 0 & $4Z^{(5)}_{1A}$ \\
2B & 2A & $-16$ & $4Z^{(5)}_{2B}$ \\

\hline
\end{tabular}
\end{table}

\begin{table}
\centering
\caption{Coincidences with umbral moonshine at $\ell=7$}
\label{table:24spacecoin}
\def\arraystretch{1.3}
\begin{tabular}{cccccccc}

$\Co_0$ & $\Co_1$ & $D^{(7)}_{g}$ & $\phi^{(7)}_g$ & $N_g$ \\

\hline

1A & 1A & $-1$ & $6Z^{(7)}_{1A}$ \\

\hline
\end{tabular}
\end{table}


\end{document}